\def\RSsubtxt{section~}\newref{sub}{name = \RSsubtxt}}
\def\RSthmtxt{theorem~}\newref{thm}{name = \RSthmtxt}}
\def\RSlemtxt{lemma~}\newref{lem}{name = \RSlemtxt}}
\theoremstyle{plain}
\numberwithin{equation}{section}
\numberwithin{figure}{section}
\numberwithin{table}{section}
\newtheorem{stthm}{\protect\theoremname}[section]
\newtheorem{spprop}{\protect\propositionname}[section]
 \theoremstyle{remark}
 \newtheorem*{rem*}{\protect\remarkname}
 \theoremstyle{plain}
 \newtheorem*{thm*}{\protect\theoremname}
 \theoremstyle{plain}
 \newtheorem*{cor*}{\protect\corollaryname}
 \theoremstyle{plain}
 \newtheorem{sllem}{\protect\lemmaname}[section]
 \def\SEI#1{\setcounter{enumi}{#1}}
\newcommand\mynobreakpar{\par\nobreak\@afterheading}
 \providecommand{\corollaryname}{Corollary}
 \providecommand{\lemmaname}{Lemma}
 \providecommand{\propositionname}{Proposition}
 \providecommand{\remarkname}{Remark}
 \providecommand{\theoremname}{Theorem}
\begin{document}
\selectlanguage{english}%
\def\rightmark{WEIGHTED $L^p$ ESTIMATES FOR SOLUTIONS OF THE $\bar\partial$ EQUATION AND APPLICATIONS}
\def\leftmark{P. CHARPENTIER \& Y. DUPAIN}
\def\RSsectxt{Section~}%

\selectlanguage{american}%

\title{Weighted and Boundary $L^{p}$ Estimates for Solutions of the $\overline{\partial}$-equation
on Lineally Convex Domains of Finite Type and Applications}

\author{P. Charpentier \& Y. Dupain}
\begin{abstract}
We obtain sharp weighted estimates for solutions of
the equation $\overline{\partial}u=f$ in a lineally convex domain
of finite type. Precisely we obtain estimates in the spaces $L^{p}(\Omega,\delta^{\gamma})$,
$\delta$ being the distance to the boundary, with two different types
of hypothesis on the form $f$: first, if the data $f$ belongs to
$L^{p}\left(\Omega,\delta_{\Omega}^{\gamma}\right)$, $\gamma>-1$,
we have a mixed gain on the index $p$ and the exponent $\gamma$;
secondly we obtain a similar estimate when the data $f$ satisfies
an apropriate anisotropic $L^{p}$ estimate with weight $\delta_{\Omega}^{\gamma+1}$.
Moreover we extend those results to $\gamma=-1$ and obtain $L^{p}(\partial\Omega)$
and $BMO(\partial\Omega)$ estimates. These results allow us to extend
the $L^{p}(\Omega,\delta^{\gamma})$-regularity results for weighted
Bergman projection obtained in \cite{CDM} for convex domains to more
general weights.
\end{abstract}

\keywords{lineally convex, finite type, $\overline{\partial}$-equation, weighted
Bergman projection}

\subjclass[2010]{32T25, 32T27}

\address{P. Charpentier, Universit\'e Bordeaux I, Institut de Math\'ematiques
de Bordeaux, 351, Cours de la Lib\'eration, 33405, Talence, France}

\email{P. Charpentier: philippe.charpentier@math.u-bordeaux.fr}

\maketitle

\section{Introduction}

Sharp estimates for solutions of the $\overline{\partial}$-equation
are a fundamental tool to study various problems in complex analysis
of several variables.

In this paper we consider the case of smoothly bounded lineally convex
domain of finite type $\Omega$ in $\mathbb{C}^{n}$. Precisely, we
obtain new weighted $L^{p}$ estimates for solutions of the equation
$\overline{\partial}u=f$, $f$ being a $\left(0,r\right)$-form,
$1\leq r\leq n-1$, in $\Omega$.

Our first result is a weighted $L^{p}\left(\Omega,\delta_{\Omega}^{\gamma}\right)$
estimate ($\delta_{\Omega}$ being the distance to the boundary of
$\Omega$) with a mixed gain on $p$ and $\gamma$ extending results
obtained (without weights) for convex domains of finite type by various
authors (for example, A. Cumenge in \cite[Theorem 1.2]{Cumenge-estimates-holder}
and B. Fisher in \cite[Theorem 1.1]{MR1815835}) (see also T. Hefer
\cite{Hef02}).

The second one gives a weighted $L^{p}\left(\Omega,\delta_{\Omega}^{\gamma}\right)$,
$\gamma>-1$, estimate with gain on $\gamma$, with a non isotropic
hypothesis $\left\Vert f\right\Vert _{\mathbbmss k,p}$ (formula (\ref{eq:punctual-norm-k-p}))
on the form $f$ generalizing the estimate obtained by A. Cumenge
(\cite[Theorem 1.3]{Cumenge-Navanlinna-convex}) in convex domains
of finite type for $p=1$ with the ``norm'' $\left\Vert f\right\Vert _{\mathbbmss k}$
defined in \cite{Bruna-Charp-Dupain-Annals}. As far as we know, the
estimate presented here (for $p>1$) was only stated for strictly
pseudoconvex domains in \cite[Theorem 1.4 and Remark that follows]{Cha80}.

The two last results are $L^{p}(\partial\Omega)$ estimates. The first
is the limit case of the previous one vhen $\gamma$ tends to $-1$
and the second one is a $BMO(\partial\Omega)$ and $L^{p}(\partial\Omega)$
estimate with an hypothesis based on Carleson measure for $\left(0,1\right)$-forms
only.

To prove these results, we use the method introduced in \cite{CDMb},
which overcomes the fact that the Diederich-Fornaess support function
is only locally defined and that it is not possible to extend it to
the whole domain (like W. Alexandre did it in the convex case in \cite{Ale01})
using a division with good estimates, our domain being non convex.

\medskip{}

These estimates are used next to generalize an estimate for weighted
Bergman projections obtained in \cite{CDM} for convex domains of
finite type.

The study of the regularity of the Bergman projection onto holomorphic
functions in a given Hilbert space is a very classical subject. When
the Hilbert space is the standard Lebesgue $L^{2}$ space on a smoothly
bounded pseudoconvex domain $\Omega$ in $\mathbb{C}^{n}$, many results
are known and there is a very large bibliography.

When the Hilbert space is a weighted $L^{2}$ space on a smoothly
bounded pseudoconvex domain $\Omega$ in $\mathbb{C}^{n}$, it is
well known for a long time that the regularity of the Bergman projection
depends strongly on the weight (\cite{Kohn-defining-function}, \cite{Bar92},
\cite{Christ96}). Until last years few results where known (see \cite{FR75},
\cite{Lig89}, \cite{BG95}, \cite{CDC97}) but recently some positive
and negative results where obtained by several authors (see for example
\cite{Zey11}, \cite{Zey12}, \cite{Zey13a}, \cite{Zey13b}, \cite{CDM},
\cite{CPDY}, \cite{CZ}, \cite{Zey} and references therein).

Let $\Omega$ be a convex domain of finite type $m$ in $\mathbb{C}^{n}$.
Let $g$ be a gauge function for $\Omega$ and define $\rho_{0}=g^{4}e^{1-\nicefrac{1}{g}}-1$.
Let $P_{\omega_{0}}$ be the Bergman projection of the space $L^{2}\left(\Omega,\omega_{0}\right)$,
where $\omega_{0}=\left(-\rho_{0}\right)^{r}$, $r\in\mathbb{Q}_{+}$.
Then in \cite[Theorem 2.1]{CDM} we proved that $P_{\omega_{0}}$
maps continuously the spaces $L^{p}\left(\Omega,\delta_{\Omega}^{\beta}\right)$,
$p\in\left]1,+\infty\right[$, $0<\beta+1\leq p(r+1)$, into themselves.
Here we consider a weight $\omega$ which is a non negative rational
power of a $\mathcal{C}^{2}$ function in $\overline{\Omega}$ equivalent
to the distance to the boundary and we prove that the Bergman projection
$P_{\omega}$ of the Hilbert space $L^{2}\left(\Omega,\omega\right)$
maps continuously the spaces $L^{p}\left(\Omega,\delta_{\partial\Omega}^{\beta}\right)$,
$p\in\left]1,+\infty\right[$, $0<\beta+1\leq r+1$ into themselves
and the lipschitz spaces $\Lambda_{\alpha}(\Omega)$, $0<\alpha\leq\nicefrac{1}{m}$,
into themselves.

This result is obtained comparing the operators $P_{\omega_{0}}$
and $P_{\omega}$ with the method described in \cite{CPDY}. To do
it, we use the weighted $L^{p}\left(\Omega,\delta_{\Omega}^{\gamma}\right)$
estimates with appropriate gains on the index $p$ and on the power
$\gamma$ for solution of the $\overline{\partial}$-equation obtained
in the first part.

\section{Notations and main results}

Throughout this paper we will use the following general notations:
\begin{itemize}
\item $\Omega$ is a smoothly bounded lineally convex domain of finite type
$m$ in $\mathbb{C}^{n}$. Precisely (c.f. \cite{CDMb}) ``lineally
convex'' means that, for all point in the boundary $\partial\Omega$
of $\Omega$, there exists a neighborhood $W$ of $p$ such that,
for all point $z\in\partial\Omega\cap W$,
\[
\left(z+T_{z}^{1,0}\right)\cap(D\cap W)=\emptyset,
\]
where $T_{z}^{1,0}$ is the holomorphic tangent space to $\partial\Omega$
at the point $z$. Furthermore, we can assume that there exists a
a smooth defining function $\rho$ of $\Omega$ such that, for $\delta_{0}$
sufficiently small, the domains $\Omega_{t}=\left\{ \rho(z)<t\right\} $,
$-\delta_{0}\leq t\leq\delta_{0}$, are all lineally convex of finite
type $\leq m$.
\item $\delta_{\Omega}$ denotes the distance to the boundary of $\Omega$.
\item For any real number $\gamma>-1$, we denote by $L^{p}\left(\Omega,\delta_{\Omega}^{\gamma}\right)$
the $L^{p}$-space on $\Omega$ for the measure $\delta_{\Omega}^{\gamma}(z)d\lambda(z)$,
$\lambda$ being the Lebesgue measure.
\item $BMO(\Omega)$ denotes the standard $BMO$ space on $\Omega$ and
$\Lambda_{\alpha}(\Omega)$ the standard lipschitz space.
\end{itemize}

Our first results give sharp $L^{q}\left(\Omega,\delta_{\Omega}^{\gamma'}\right)$,
$BMO(\Omega)$ and $\Lambda_{\alpha}(\Omega)$ estimates for solutions
of the $\overline{\partial}$-equation in $\Omega$ with data in $L^{p}\left(\Omega,\delta_{\Omega}^{\gamma}\right)$:
\begin{stthm}
\label{thm:d-bar-q-gamma'-p-gamma-lip}Let $N$ be a positive large
integer. let $\gamma$ and $\gamma'$ be two real numbers such that
$\gamma'>-1$ and $\gamma-\nicefrac{1}{m}\leq\gamma'\leq\gamma\leq N-2$.
Then there exists a linear operator $T$, depending on $\rho$ and
$N$, such that, for any $\overline{\partial}$-closed $\left(0,r\right)$-form
($1\leq r\leq n-1$) with coefficients in $L^{p}\left(\Omega,\delta_{\Omega}^{\gamma}\right)$,
$p\in\left[1,+\infty\right]$, $Tf$ is a solution of the equation
$\overline{\partial}(Tf)=f$ such that:
\begin{enumerate}
\item If $1\leq p<\frac{m(\gamma'+n)+2-(m-2)(r-1)}{1-m(\gamma-\gamma')}$,
then $T$ maps continuously the space of $\overline{\partial}$-closed
forms with coefficients in $L^{p}\left(\Omega,\delta_{\Omega}^{\gamma}\right)$
into the space of forms whose coefficients are in $L^{q}\left(\Omega,\delta_{\Omega}^{\gamma'}\right)$
with $\frac{1}{q}=\frac{1}{p}-\frac{1-m(\gamma-\gamma')}{m(\gamma'+n)+2-(m-2)(r-1)}$;
\item If $p=m(\gamma+n)+2-(m-2)(r-1)$, then $T$ maps continuously the
space of $\overline{\partial}$-closed forms with coefficients in
$L^{p}\left(\Omega,\delta_{\Omega}^{\gamma}\right)$ into the space
of forms whose coefficients are in $BMO(\Omega)$;
\item If $p\in\left]m(\gamma+n)+2-(m-2)(r-1),+\infty\right]$, then $T$
maps continuously the space of $\overline{\partial}$-closed forms
with coefficients in $L^{p}\left(\Omega,\delta_{\Omega}^{\gamma}\right)$
into the space of forms whose coefficients are in the lipschitz space
$\Lambda_{\alpha}(\Omega)$ with $\alpha=\frac{1}{m}\left[1-\frac{m(\gamma+n)+2-(m-2)(r-1)}{p}\right]$.
\end{enumerate}
\end{stthm}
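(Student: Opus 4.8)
The plan is to construct the operator $T$ from an explicit integral solution kernel adapted to the lineal convexity and finite type of $\Omega$, and then to extract the mapping properties from pointwise estimates on that kernel. First I would recall the Cauchy--Fantappiè type construction from \cite{CDMb}: using the family of defining functions $\rho$ for the domains $\Omega_t$ together with the Diederich--Fornaess support functions, one builds local holomorphic sections of the relevant bundle over each boundary chart, patches them to the interior section by the homotopy formula, and assembles a global kernel $K(z,\zeta)$ solving $\overline\partial$; the technical device from \cite{CDMb} that avoids the failure of a global support function is precisely what lets one do this on a non-convex domain. The resulting $Tf(z) = \int_\Omega K(z,\zeta)\wedge f(\zeta)$ is linear and, by the standard homotopy identity, satisfies $\overline\partial(Tf)=f$ for $\overline\partial$-closed $f$; the parameter $N$ enters as the number of factors of the defining function used to kill boundary singularities, so $T$ depends on $\rho$ and $N$ as claimed.

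Next I would reduce all three conclusions to a single family of pointwise kernel bounds. Using the anisotropic pseudodistance and the polydiscs $P_\varepsilon(\zeta)$ associated with the finite type geometry (as in Bruna--Charpentier--Dupain and in \cite{CDMb}), one has estimates of the schematic form
\[
|K(z,\zeta)| \lesssim \frac{1}{|\rho(\zeta)|^{?}}\,\frac{1}{\text{(anisotropic volume of the ball joining }z\text{ and }\zeta)}
\]
with the various exponents determined by $n$, $m$, $r$ and the number of support-function factors. I would then feed these into a weighted Schur-type test: to prove (1), estimate $\int_\Omega |K(z,\zeta)|\,\delta_\Omega(\zeta)^{-\gamma/p}\,(\text{something})\,d\lambda(\zeta)$ and the dual integral in $z$, tracking the exponent $\gamma'$ and the gain. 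The bookkeeping is organized so that the fractional-integration gain one gets from integrating $1/(\text{volume})$ against a power of $\delta_\Omega$ is exactly $\frac{1-m(\gamma-\gamma')}{m(\gamma'+n)+2-(m-2)(r-1)}$; this is where the precise numerology of the statement is forced. For the endpoint (2), one shows the kernel operator maps $L^p$ with the critical exponent into $BMO(\Omega)$ by the usual mechanism — test the oscillation of $Tf$ over a ball against the portion of the kernel over a concentric dilate and its complement, using that at the critical exponent the relevant integral is logarithmically divergent but the oscillation integral is not. For (3), the same kernel estimates, now integrated past the critical exponent, give Hölder continuity with the stated exponent $\alpha$ via the standard estimate on $Tf(z)-Tf(z')$ in terms of the anisotropic distance $d(z,z')^{1/m}$.

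The main obstacle, and the part deserving the most care, is the weighted fractional-integration estimate at the core of part (1): one must show that the anisotropic volume integral $\int_\Omega |\rho(\zeta)|^{a}\,\mathrm{vol}(B(z,\zeta))^{-1}\,d\lambda(\zeta)$ behaves like $\delta_\Omega(z)^{a - (\text{gain})}$ uniformly, with the exponent being sharp, and then to combine such scalar estimates with Schur's lemma so that the pair of hypotheses "$p$ below the critical value'' and "$\gamma-1/m \le \gamma' \le \gamma$'' together yield boundedness $L^p(\delta^\gamma)\to L^q(\delta^{\gamma'})$ with no loss. This requires the covering lemmas for the anisotropic balls, the comparison of $\delta_\Omega$ with $|\rho|$, and a careful splitting of $\Omega$ into the region near $\zeta$ (where the volume factor dominates) and the region far from $\zeta$ (where the $|\rho(\zeta)|$ factor and the bounded geometry control things); the constraint $\gamma' > -1$ ensures the weight is locally integrable and the constraint $\gamma \le N-2$ ensures enough support-function factors were included to absorb the worst boundary singularity. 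Everything else — linearity of $T$, the homotopy identity, the passage $BMO$/$\Lambda_\alpha$ — is either formal or a routine adaptation of the unweighted arguments of Cumenge and Fisher.
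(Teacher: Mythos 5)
The main gap is in your construction of $T$ and in the claim that $\overline{\partial}(Tf)=f$ follows ``by the standard homotopy identity'' for a single kernel operator. The method of \cite{CDMb} used here does \emph{not} patch the local Diederich--Fornaess support functions into a global solution kernel --- that is exactly what the paper says cannot be done on a non-convex lineally convex domain (no division with good estimates). Instead one uses the Berndtsson--Andersson weighted formula with weight $G(\xi)=\xi^{-N}$, and the homotopy identity then reads $f=(-1)^{r+1}\overline{\partial}_{z}\left(\int_{\Omega}f\wedge K_{N}^{1}\right)-\int_{\Omega}f\wedge P_{N}$, with a non-vanishing $\overline{\partial}$-closed remainder $\int_{\Omega}f\wedge P_{N}$. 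So the operator $f\mapsto\int_{\Omega}f\wedge K_{N}^{1}$ alone is not a solution operator: the paper defines $Tf=(-1)^{r+1}\int_{\Omega}f\wedge K_{N}^{1}-\overline{\partial}^{*}\mathcal{N}\left(\int_{\Omega}f\wedge P_{N}\right)$ and must prove (its Lemmas 3.1--3.3) that, for $N$ large relative to $\gamma$, the coefficients of $\int_{\Omega}f\wedge P_{N}$ lie in a high Sobolev space with norm controlled by $\left\Vert f\right\Vert _{L^{1}(\Omega,\delta_{\Omega}^{\gamma})}$, so that the subelliptic estimates for the $\overline{\partial}$-Neumann problem on finite type domains give a correction bounded in $\mathcal{C}^{1}(\overline{\Omega})$; this is also where the hypothesis $\gamma\leq N-2$ genuinely enters, not merely in absorbing a boundary singularity of the kernel. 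Your proposal omits this component entirely, and without it $Tf$ need not solve the equation.

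For the estimates themselves your plan is close to the paper's for part (1): the paper applies a weighted Schur-type lemma (from Range's book) to $\left|K_{N}^{1}\right|^{1+\mu_{0}}$ with $\mu_{0}=\frac{1-m(\gamma-\gamma')}{m(\gamma+n+1-r)+2r-1}$, together with the anisotropic integration lemma over the coronas $P_{2^{i}\delta_{\Omega}(z)}(z)\setminus P_{2^{i-1}\delta_{\Omega}(z)}(z)$ --- essentially the splitting you describe. For (2) and (3), however, the paper does not run an oscillation argument on balls nor a direct H\"older-difference estimate: it proves the gradient bounds $\left|\nabla_{z}T_{K}f(z)\right|\lesssim\delta_{\Omega}(z)^{-1}$ at the critical exponent and $\lesssim\delta_{\Omega}(z)^{\alpha-1}$ above it, via H\"older's inequality against $\int_{\Omega}\left|\nabla_{z}K_{N}^{1}\right|^{p'}\delta_{\Omega}(\zeta)^{-\gamma p'/p}d\lambda(\zeta)$, and concludes with the Hardy--Littlewood lemma; note also that $\Lambda_{\alpha}(\Omega)$ in the statement is the standard isotropic Lipschitz space, so a modulus of continuity expressed through the anisotropic distance to the power $1/m$ is not the form of the conclusion. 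Your alternative endpoint arguments could likely be made to work, but they would require kernel difference estimates you have not sketched, whereas the gradient/Hardy--Littlewood route needs only the derivative bounds already contained in the kernel lemmas.
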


\begin{rem*}
\quad\mynobreakpar
\begin{itemize}
\item Note that, if $\gamma'<\gamma$, then $\frac{m(\gamma'+n)+2-(m-2)(r-1)}{1-m(\gamma-\gamma')}>m(\gamma+n)+2-(m-2)(r-1)$,
and (3) is sharper than (1) for 
\[
p\in\left]m(\gamma+n)+2-(m-2)(r-1),\frac{m(\gamma'+n)+2-(m-2)(r-1)}{1-m(\gamma-\gamma')}\right[.
\]

\item For $\gamma=\gamma'=0$ and $r=1$, these estimates are known to be
sharp (see \cite{CKM93}).
\item For $m>2$ and $r\geq2$, the above result is strictly better than
the one obtain for convex domains by A Cumenge in \cite[Theorem 1.2]{Cumenge-estimates-holder}
and B. Fisher in \cite[Theorem 1.1]{MR1815835} and for complex ellipsoids
in \cite{CKM93}.
\end{itemize}
\end{rem*}

\bigskip{}

The two next propositions, which are immediate corollaries of the
theorem, will be used in the last section:
\begin{spprop}
\label{prop:d-bar-gain-weight}For all large integer $N$, there exists a linear operator
$T$ solving the $\overline{\partial}$-equation in $\Omega$ such
that, for all $p\in\left[1,+\infty\right[$ and all $\gamma$, $-1<\gamma\leq N-3$, there exists a constant
$C_{N,p,\gamma}>0$ such that, for all $\overline{\partial}$-closed $\left(0,r\right)$-form $f$, $1\leq r\leq n-1$, we have
\[
\int_{\Omega}\left|Tf\right|^{p}\delta_{\Omega}^{\gamma}d\lambda\leq C_{N,p,\gamma}\int_{\Omega}\left|f\right|^{p}\delta_{\Omega}^{\gamma+\nicefrac{1}{m}}d\lambda.
\]

\end{spprop}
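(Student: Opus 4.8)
The plan is to derive the proposition as an immediate specialization of \thmref{d-bar-q-gamma'-p-gamma-lip}. Fix once and for all a large integer $N$ and let $T$ be the linear operator furnished by that theorem; since $T$ depends only on $\rho$ and $N$, and $\rho$ is attached to $\Omega$, it is a single operator serving all the exponents below. Given $p\in\left[1,+\infty\right[$ and a real number $\gamma$ with $-1<\gamma\le N-3$, apply \thmref{d-bar-q-gamma'-p-gamma-lip} with the pair $(\gamma,\gamma')$ occurring there replaced by $(\gamma+\nicefrac{1}{m},\gamma)$: that is, take the data weight to be $\delta_{\Omega}^{\gamma+\nicefrac{1}{m}}$ and aim for a solution in $L^{p}\left(\Omega,\delta_{\Omega}^{\gamma}\right)$. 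The admissibility conditions of the theorem hold: the target exponent $\gamma$ satisfies $\gamma>-1$; one has $(\gamma+\nicefrac{1}{m})-\nicefrac{1}{m}=\gamma\le\gamma+\nicefrac{1}{m}$, so the data exponent lies exactly $\nicefrac{1}{m}$ above the target; and $\gamma+\nicefrac{1}{m}\le(N-3)+1=N-2$ because $m\ge1$.

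The crucial observation is that, for this choice, $1-m\bigl((\gamma+\nicefrac{1}{m})-\gamma\bigr)=1-m\cdot\nicefrac{1}{m}=0$. Hence the upper threshold on $p$ in part~(1) of the theorem — the fraction whose numerator is $m(\gamma+n)+2-(m-2)(r-1)$ and whose denominator is $1-m\bigl((\gamma+\nicefrac{1}{m})-\gamma\bigr)=0$ — equals $+\infty$; for this one only needs that the numerator is positive, an elementary consequence of $\gamma>-1$ and $1\le r\le n-1$. Therefore part~(1) applies to \emph{every} $p\in\left[1,+\infty\right[$, and the exponent $q$ it produces satisfies
\[
\frac1q=\frac1p-\frac{1-m\bigl((\gamma+\nicefrac{1}{m})-\gamma\bigr)}{m(\gamma+n)+2-(m-2)(r-1)}=\frac1p,
\]
so $q=p$: pushing the admissible weight gain to its extreme value $\nicefrac{1}{m}$ consumes the entire gain on the index.

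Part~(1) of \thmref{d-bar-q-gamma'-p-gamma-lip} thus states precisely that, for every $\overline{\partial}$-closed $(0,r)$-form $f$ with $1\le r\le n-1$ and coefficients in $L^{p}\left(\Omega,\delta_{\Omega}^{\gamma+\nicefrac{1}{m}}\right)$, the form $Tf$ solves $\overline{\partial}(Tf)=f$ and $\bigl\|Tf\bigr\|_{L^{p}(\Omega,\delta_{\Omega}^{\gamma})}\le C\,\bigl\|f\bigr\|_{L^{p}(\Omega,\delta_{\Omega}^{\gamma+\nicefrac{1}{m}})}$ for a constant $C$ depending on $N,p,\gamma$ (and, harmlessly, on $r$ and $n$, which range over a fixed finite set); taking the $p$-th power and the maximum of $C^{p}$ over $1\le r\le n-1$ yields the displayed inequality with the constant $C_{N,p,\gamma}$. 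I do not anticipate a genuine obstacle here: the argument is pure bookkeeping of the parameter ranges, and the only two points that deserve a careful check are the positivity of the numerator $m(\gamma+n)+2-(m-2)(r-1)$ (so that the ``infinite threshold'' in part~(1) really does cover the whole range $p\in\left[1,+\infty\right[$) and the fact, already built into the theorem, that $T$ is independent of $p$ and $\gamma$.
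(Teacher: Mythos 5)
Your derivation is correct and is exactly how the paper obtains this proposition: it is stated there as an immediate corollary of Theorem \ref{thm:d-bar-q-gamma'-p-gamma-lip}, obtained by taking the data weight $\gamma+\nicefrac{1}{m}$ and target weight $\gamma$, so that $1-m(\gamma_{\mathrm{data}}-\gamma_{\mathrm{target}})=0$, the threshold on $p$ becomes $+\infty$, and $q=p$. Your parameter checks (admissibility, $\gamma+\nicefrac{1}{m}\le N-2$, positivity of the numerator, and independence of $T$ from $p,\gamma$) are the right bookkeeping and match the paper's intent.
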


\begin{spprop}
\label{prop:d-bar-gain-exponent}For all large integer $N$, there exists a constant $\varepsilon_{N}>0$
and a linear operator $T$ solving the $\overline{\partial}$-equation in $\Omega$ such
that, for and all $p\in\left[1,+\infty\right[$ and all $\gamma$, $-1<\gamma\leq N-3$, there exists a constant
$C_{N,p,\gamma}>0$ such that for all $\overline{\partial}$-closed $\left(0,r\right)$-form $f$, $1\leq r\leq n-1$, we have
\[
\int_{\Omega}\left|Tf\right|^{p+\varepsilon_{N}}\delta_{\Omega}^{\gamma}d\lambda\leq C_{N,p,\gamma}\int_{\Omega}\left|f\right|^{p}\delta_{\Omega}^{\gamma}d\lambda.
\]

\end{spprop}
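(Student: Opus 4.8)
The plan is to deduce Proposition~\propref{d-bar-gain-exponent} from Theorem~\thmref{d-bar-q-gamma'-p-gamma-lip} by the same elementary mechanism as the previous proposition: one simply chooses $\gamma'=\gamma$ in part (1) of the theorem, so that the mixed gain degenerates to a pure gain on the Lebesgue index. Fix a large integer $N$ and let $T$ be the operator furnished by the theorem for this $N$; it is linear and solves $\overline{\partial}(Tf)=f$ for every $\overline{\partial}$-closed $(0,r)$-form $f$ with coefficients in $L^p(\Omega,\delta_\Omega^\gamma)$. With $\gamma'=\gamma$ the hypothesis $\gamma-\nicefrac1m\le\gamma'\le\gamma\le N-2$ of the theorem is automatically satisfied whenever $-1<\gamma\le N-2$ (in particular for $-1<\gamma\le N-3$), and the threshold in part (1) becomes $p<m(\gamma+n)+2-(m-2)(r-1)=:p_0(\gamma)$, while the target index is given by
\[
\frac1q=\frac1p-\frac{1}{m(\gamma+n)+2-(m-2)(r-1)}=\frac1p-\frac1{p_0(\gamma)}.
\]
Thus for any $p$ with $1\le p<p_0(\gamma)$, $T$ maps $\overline{\partial}$-closed forms in $L^p(\Omega,\delta_\Omega^\gamma)$ continuously into $L^q(\Omega,\delta_\Omega^\gamma)$ with this $q>p$, which is exactly an inequality of the desired shape with $\varepsilon=q-p>0$.

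The one point requiring care is that the exponent gain $\varepsilon$ produced this way depends on $p$ and $\gamma$, whereas the statement asks for a single $\varepsilon_N$ that works for all $p\in[1,+\infty[$ and all $\gamma\in(-1,N-3]$. To handle this I would argue as follows. First, the quantity $p_0(\gamma)=m(\gamma+n)+2-(m-2)(r-1)$ is increasing in $\gamma$ and in $n$ and is bounded below, over $\gamma\in(-1,N-3]$ and $1\le r\le n-1$, by a constant $c_0=c_0(m,n)>1$ (the worst case being $\gamma\to -1$ and $r=n-1$); concretely $p_0(\gamma)\ge m(n-1)+2-(m-2)(n-2)=2n-2+m>2$. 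Hence for $1\le p<c_0/2$, say, part (1) applies with $\gamma'=\gamma$ and yields $\tfrac1q=\tfrac1p-\tfrac1{p_0(\gamma)}\le\tfrac1p-\tfrac1{2p}\cdot$ — more simply, one gets a uniform lower bound $q-p\ge \eta_N>0$ on a fixed initial range of $p$. For the remaining range $p\ge c_0/2$ one does not get a gain directly from part (1) at the given $\gamma$; instead I would use the standard bootstrapping: apply the theorem at a larger value of $\gamma$ (still $\le N-2$), for which $p$ now lies below the threshold, obtaining $Tf\in L^{p+\varepsilon}(\Omega,\delta_\Omega^{\gamma'})$ for a suitable $\gamma'$, and then absorb the change of weight using the trivial inclusion $L^{p}(\Omega,\delta_\Omega^{a})\hookrightarrow L^{p}(\Omega,\delta_\Omega^{b})$ for $b\ge a$ together with Hölder's inequality to pass from $\delta_\Omega^{\gamma'}$ back to $\delta_\Omega^{\gamma}$ at the cost of shrinking $\varepsilon$. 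Since $\gamma$ ranges only over the compact-type interval $(-1,N-3]$ and $p_0$ grows linearly in $\gamma$, finitely many such steps suffice and the infimum of the resulting gains is a strictly positive $\varepsilon_N$.

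Finally, once $\varepsilon_N$ is fixed, the constant $C_{N,p,\gamma}$ is just the operator norm of $T\colon L^p(\Omega,\delta_\Omega^\gamma)\to L^{p+\varepsilon_N}(\Omega,\delta_\Omega^\gamma)$ coming from the theorem (composed, where the bootstrap was used, with the norms of the elementary Hölder/inclusion steps), so no new estimate is needed. I expect the only genuine obstacle to be the uniformity bookkeeping in the middle paragraph — making sure that the dependence of the gain on $p$ and $\gamma$ can be absorbed into one $\varepsilon_N$ without the target weight drifting outside the admissible range $\gamma'>-1$, $\gamma'\le N-2$; everything else is a direct specialization of Theorem~\thmref{d-bar-q-gamma'-p-gamma-lip}.
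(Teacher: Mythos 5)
Your handling of the below-threshold range is fine and is the intended mechanism: taking $\gamma'=\gamma$ in part (1) of \thmref{d-bar-q-gamma'-p-gamma-lip}, for $1\le p<p_{0}(\gamma):=m(\gamma+n)+2-(m-2)(r-1)$ one gets $Tf\in L^{q}(\Omega,\delta_{\Omega}^{\gamma})$ with $q-p=p^{2}/(p_{0}(\gamma)-p)\ge 1/p_{0}(\gamma)\ge 1/(m(N-3+n)+2)$, and since $\delta_{\Omega}^{\gamma}d\lambda$ is a finite measure one can lower $q$ to $p+\varepsilon_{N}$ by H\"older, so $\varepsilon_{N}=1/(m(N-3+n)+2)$ works uniformly there. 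The gap is in your treatment of the remaining range. Your bootstrap requires, for each $p$ above the threshold, a larger exponent $\gamma_{1}\le N-2$ with $p<p_{0}(\gamma_{1})$; but the theorem caps the weight exponent at $N-2$, so $p_{0}(\gamma_{1})\le m(N-2+n)+2-(m-2)(r-1)$, and for every $p$ beyond that fixed value no admissible $\gamma_{1}$ exists. Since $N$ is fixed first and $p$ ranges over all of $[1,+\infty[$ (this is precisely how the proposition is used in the proof of \thmref{estimates-bergman}, where the induction climbs to arbitrarily large exponents), your argument leaves an unbounded range of $p$ uncovered; the remark that finitely many steps suffice because $\gamma$ lies in a bounded interval misplaces the source of non-uniformity, which is $p$, not $\gamma$. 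Moreover the weight-change step you invoke is not free: passing from $L^{q}(\Omega,\delta_{\Omega}^{\gamma_{1}})$ back to $L^{s}(\Omega,\delta_{\Omega}^{\gamma})$ with $\gamma<\gamma_{1}$ by H\"older requires $(\gamma+1)q>(\gamma_{1}+1)s$, a quantitative condition you never verify.

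The above-threshold case is in fact the easy one, which is why the paper can state the proposition as an immediate corollary, and it is handled at the \emph{given} $\gamma$ without any change of weight. Either invoke parts (2) and (3) of \thmref{d-bar-q-gamma'-p-gamma-lip}: for $p\ge p_{0}(\gamma)$ the solution $Tf$ lies in $BMO(\Omega)$ or in $\Lambda_{\alpha}(\Omega)\subset L^{\infty}(\Omega)$, hence in $L^{s}(\Omega,\delta_{\Omega}^{\gamma})$ for every finite $s$ with norm controlled by $\Vert f\Vert_{L^{p}(\Omega,\delta_{\Omega}^{\gamma})}$; or, staying within part (1), observe that on the finite measure space $(\Omega,\delta_{\Omega}^{\gamma}d\lambda)$ one has $\Vert f\Vert_{L^{p'}(\Omega,\delta_{\Omega}^{\gamma})}\lesssim\Vert f\Vert_{L^{p}(\Omega,\delta_{\Omega}^{\gamma})}$ for any $p'<p$, and choose $p'<p_{0}(\gamma)$ so close to $p_{0}(\gamma)$ that the corresponding $q=p'p_{0}(\gamma)/(p_{0}(\gamma)-p')$ exceeds $p+\varepsilon_{N}$. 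Since the constant $C_{N,p,\gamma}$ is allowed to depend on $p$ and $\gamma$, either route settles the whole range $p\ge p_{0}(\gamma)$ with the same fixed $\varepsilon_{N}$, and no appeal to larger weight exponents (hence no conflict with the bound $\gamma\le N-2$) is ever needed.
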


\bigskip{}

In \cite{Cumenge-Navanlinna-convex} A. Cumenge obtained a weighted
$L^{1}$-anisotropic estimate for solutions of the $\overline{\partial}$-equation
for convex domains of finite type, using the punctual norm $\left\Vert f\right\Vert _{\mathbbmss k}$
introduced in \cite{Bruna-Charp-Dupain-Annals}:
\begin{thm*}[A. cumenge \cite{Cumenge-Navanlinna-convex}]
Let $D$ be a convex domain of finite type. There exists a constant
$C>0$ such that, for all $\alpha>0$ and all smooth $\overline{\partial}$-closed
$\left(0,1\right)$-form $f$ on $\overline{D}$, there exists a solution
of the equation $\overline{\partial}u=f$, continuous on $\overline{D}$
such that
\[
\int_{D}\left|u\right|\delta_{D}^{\alpha-1}d\lambda\leq C\frac{1}{\min\left\{ \alpha,1\right\} }\int_{D}\left\Vert f\right\Vert _{\mathbbmss k}\delta_{D}^{\alpha}d\lambda.
\]

\end{thm*}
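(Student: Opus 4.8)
The plan is to construct $u$ by an explicit integral solution operator and then to reduce the weighted $L^{1}$ bound to a Schur-type estimate for a positive kernel in the anisotropic geometry of $D$. First I would fix a Berndtsson--Andersson type solution kernel $K_{N}(z,\zeta)$ built from a holomorphic support function $S(z,\zeta)$ for the convex finite-type domain $D$ --- which, in contrast with the general lineally convex case, is available globally --- together with a weight factor of the form $(-\rho(\zeta))^{N}$ with $N$ large. The operator $T_{N}f(z)=\int_{D}K_{N}(z,\zeta)\wedge f(\zeta)$ then solves $\overline{\partial}(T_{N}f)=f$ for every smooth $\overline{\partial}$-closed $(0,1)$-form $f$ on $\overline{D}$, and for $N$ large enough $u=T_{N}f$ is continuous on $\overline{D}$.

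Next I would obtain pointwise size estimates for $K_{N}$ in terms of McNeal's polydiscs $P_{\varepsilon}(\zeta)$ adapted to $D$ (equivalently, of the associated pseudodistance and of the radii $\tau_{j}(\zeta,\varepsilon)$). The decisive structural feature is that, in the expansion of $K_{N}(z,\zeta)\wedge f(\zeta)$, the coefficient of the complex-normal component of $f$ at $\zeta$ carries an extra factor of size $(-\rho(\zeta))$ compared with the coefficients of the complex-tangential components --- which is exactly the anisotropy built into the punctual norm $\left\Vert f\right\Vert _{\mathbbmss k}$. This yields a domination $|u(z)|\leq C\int_{D}\left\Vert f(\zeta)\right\Vert _{\mathbbmss k}\,\mathcal{K}(z,\zeta)\,d\lambda(\zeta)$, where $\mathcal{K}$ is a positive kernel controlled by the reciprocals of the volumes of the McNeal polydiscs relating $z$ and $\zeta$, and which incorporates the gain of one full power of the boundary distance that is characteristic of the $\overline{\partial}$-solving kernel.

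By Fubini it then suffices, for fixed $\zeta$, to estimate the dual integral $\int_{D}\mathcal{K}(z,\zeta)\,\delta_{D}(z)^{\alpha-1}\,d\lambda(z)$ and to show that it is $\leq C\min\{\alpha,1\}^{-1}\delta_{D}(\zeta)^{\alpha}$. I would do this by decomposing $D$ dyadically into the McNeal polydiscs centred at $\zeta$: the integration in the directions parallel to $\partial D$ is absorbed by the volume normalisation built into $\mathcal{K}$, while the gain of one power of the boundary distance in the kernel, combined with the transverse integration of $\delta_{D}(z)^{\alpha-1}$, reproduces the weight $\delta_{D}(\zeta)^{\alpha}$ with a constant essentially coming from $\int_{0}^{c}t^{\alpha-1}\,dt=c^{\alpha}/\alpha$; this is where the near-failure of integrability of $\delta_{D}^{\alpha-1}$ as $\alpha\to0^{+}$ produces the factor $\alpha^{-1}$, equivalently $\min\{\alpha,1\}^{-1}$ (uniformly in $\alpha>0$).

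I expect the main obstacle to be the kernel step: establishing the sharp anisotropic size bounds for $K_{N}$ in the McNeal polydisc geometry and doing the bookkeeping so that the normal/tangential splitting of the kernel matches precisely the weights defining $\left\Vert \cdot\right\Vert _{\mathbbmss k}$; this is where the finite-type convexity and the machinery of McNeal and of Bruna--Charpentier--Dupain are essential. A secondary difficulty is keeping the transverse integration uniform as $\alpha\to0^{+}$ so as to obtain the sharp $\min\{\alpha,1\}^{-1}$ rather than a constant that degenerates faster. (This statement is also the case $p=1$, $r=1$ of the anisotropic weighted estimate obtained later in this paper; the route sketched above is the self-contained one.)
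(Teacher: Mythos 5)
This statement is quoted from Cumenge and is not re-proved in the paper, but it is precisely the case $p=1$, $r=1$ of \thmref{d-bar-for-Nev} (a convex domain being lineally convex, with $\left\Vert \cdot\right\Vert _{\mathbbmss k,1}=\left\Vert \cdot\right\Vert _{\mathbbmss k}$ for $r=1$), and your sketch follows essentially the same route as the proof given in \secref{Proof-of-thm-est-d-bar-Nev}: a weighted Berndtsson--Andersson kernel built on a support function (global here by convexity, so the $\overline{\partial}^{*}\mathcal{N}$ correction of the lineally convex case is not even needed), pointwise anisotropic bounds written in extremal coordinates so that the kernel supplies the factor $\delta_{D}(\zeta)/\tau\left(\zeta,v,\delta_{D}(\zeta)\right)$ against the corresponding component of $f$, then Fubini and a dyadic decomposition into coronas around $\zeta$, with \lemref{Lemma-3-9} producing the $\delta_{D}(\zeta)^{\alpha}/\min\left\{ \alpha,1\right\} $ factor. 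The one imprecision to watch is your description of the gain as a binary normal-versus-tangential split carrying a single factor $-\rho(\zeta)$: that is the strictly pseudoconvex picture, whereas in finite type the matching with $\left\Vert \cdot\right\Vert _{\mathbbmss k}$ must be done direction by direction through all the scales $\tau_{i}\left(\zeta,\delta_{D}(\zeta)\right)$ (this is exactly what writing the kernel in a $\left(\zeta,2^{t}\delta_{D}(\zeta)\right)$-extremal basis achieves in the paper), a point you do flag as the main piece of bookkeeping.
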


The estimate given by \thmref{d-bar-q-gamma'-p-gamma-lip} when $p=q=1$
(and then $\gamma'=\gamma-\nicefrac{1}{m}$) is weaker than the one
given above. Then it is natural to ask if the above estimate can be
extended to weighted $L^{p}$ norms. For example, if $D$ is a smooth
strictly pseudoconvex domain it is proved in \cite[Th\'eor\`eme 1.4]{Cha80}
that: \emph{for $\alpha>0$, if $f$ is a smooth $\overline{\partial}$-closed
form such that the coefficients of $f$ are in $L^{p}\left(D,\delta_{D}^{\alpha}\right)$
and the coefficients of $f\wedge\overline{\partial}\rho$ are in $L^{p}\left(D,\delta_{\Omega}^{\alpha-\nicefrac{1}{2}}\right)$,
$1\leq p<+\infty$, $\alpha>0$, then there exists a solution of the
equation $\overline{\partial}u=f$ which is in $L^{p}\left(D,\delta_{D}^{\alpha-1}\right)$}.

Our next result extends A. Cumenge's theorem and \cite{Cha80} theorem
to $\left(0,r\right)$-forms in lineally convex domains of finite
type for $1\leq p<+\infty$.

To state it, we need to extend the definition of the punctual anisotropic
norm $\left\Vert .\right\Vert _{\mathbbmss k}$ given in \cite{CDMb}
to the $L^{p}$ context.

We first introduce new quantities associated to the geometry: for
$z$ close to the boundary of $\Omega$, let us define
\[
\sigma_{1}(z)=\delta_{\Omega}(z),
\]
and, for $2\leq k\leq n$
\[
\sigma_{k}(z)=\frac{1}{\prod_{j=1}^{k-1}\sigma_{j}(z)}\int_{B_{\delta_{\Omega}(z)}(z)}\frac{d\lambda(\zeta)}{\left|\zeta-z\right|^{2(n-k)+1}},
\]
where $B_{\delta_{\Omega}(z)}(z)$ is the pseudo-ball defined by (\ref{eq:def-pseudo-balls}).

let $f$ be a $\left(0,r\right)$-form whose coefficients are functions; for $z$ close to the boundary
and $p\in\left[1,+\infty\right[$, we define
\begin{equation}
\left\Vert f(z)\right\Vert _{\mathbbmss k,p}=\sup_{\left\Vert v_{i}\right\Vert =1}\left|\left\langle f;\overline{v_{1}},\ldots,\overline{v_{n}}\right\rangle (z)\right|^{p}\frac{\prod_{i=1}^{r}\tau\left(z,v_{i},\delta_{\Omega}(z)\right)}{\sigma_{1}(z){\displaystyle \prod_{\substack{\{k\mbox{ \tiny such that }n-r+2\leq k\\
\mbox{\tiny and }k\leq n\}
}
}}\sigma_{k}(z)},\label{eq:punctual-norm-k-p}
\end{equation}
where $\tau\left(z,v_{i},\delta_{\Omega}(z)\right)$ is given by formula
(\ref{eq:definition-tau-general}).

Note that, if the coefficients of $f$ are continuous, then $\left\Vert f(z)\right\Vert _{\mathbbmss k,p}$
is also continuous.
\begin{rem*}
\quad\mynobreakpar
\begin{itemize}
\item If $\left(e_{i}\right)$ is a $\left(z,\delta(z)\right)$-extremal
basis (see \secref{Proofs-of-theorems}, after (\ref{eq:def-pseudo-balls})),
property (\ref{geometry-3}) of the geometry implies that (with the
notation (\ref{eq:definition-tau-extremal-basis}))
\begin{gather*}
\left|\left\langle f;\overline{v_{1}},\ldots,\overline{v_{r}}\right\rangle (z)\right|^{p}\prod_{i=1}^{r}\tau\left(z,v_{i},\delta_{\Omega}(z)\right)\lesssim\\
{\displaystyle \sideset{}{^{'}}\sum_{\substack{\left|I\right|=r\\
I=\left(i_{1},\ldots,i_{r}\right)
}
}}\left|\left\langle f(z);\overline{e_{i_{1}}},\ldots,\overline{e_{i_{r}}}\right\rangle \right|^{p}\prod_{k=1}^{r}\tau_{i_{k}}\left(z,\delta_{\Omega}(z)\right),
\end{gather*}
and (\ref{eq:equivalence-pseudoballs-polydisk}) and \lemref{Lemma-3-9}
show that, for all $k$, $\sigma_{k}(z)\simeq\tau_{k}\left(z,\delta_{\Omega}(z)\right)$
so
\[
\left\Vert f(z)\right\Vert _{\mathbbmss k,p}\simeq{\displaystyle \sideset{}{^{'}}\sum_{\substack{\left|I\right|=r\\
I=\left(i_{1},\ldots,i_{r}\right)
}
}}\left|\left\langle f(z);\overline{e_{i_{1}}},\ldots,\overline{e_{i_{r}}}\right\rangle \right|^{p}\frac{\prod_{k=1}^{r}\tau_{i_{k}}\left(z,\delta_{\Omega}(z)\right)}{\delta_{\Omega}(z){\displaystyle \prod_{\substack{\{k\mbox{ \tiny such that}\\n-r+2\leq k\\\mbox{\tiny and }k\leq n\}}}}\tau_{k}\left(z,\delta_{\Omega}(z)\right)}.
\]
\\
But, of course, in general, the second member of this last equivalence
is not a continuous function of $z$.
\item In \cite{CDMb}, for $p=1$, we defined $\left\Vert f(z)\right\Vert _{\mathbbmss k}=\sup_{\left\Vert v_{i}\right\Vert =1}\frac{\left|\left\langle f;\overline{v_{1}},\ldots,\overline{v_{r}}\right\rangle (z)\right|}{\sum_{i}\frac{\delta_{\Omega}(z)}{\tau\left(z,v_{i},\delta_{\Omega}(z)\right)}}$,
and, as before, (\ref{eq:equivalence-pseudoballs-polydisk}) and \lemref{Lemma-3-9}
show that this definition is equivalent to
\[
\sup_{\substack{I=\left(i_{1},\ldots,i_{r}\right)\\
i_{p}<i_{p+1}
}
}\left|\left\langle f(z);\overline{e_{i_{1}}},\ldots,\overline{e_{i_{r}}}\right\rangle \right|\min_{i\in I}\frac{\tau_{i}\left(z,\delta_{\partial\Omega}(z)\right)}{\delta_{\partial\Omega}(z)}.
\]
Thus, when $p=1$, the definition $\left\Vert f(z)\right\Vert _{\mathbbmss k,p}$
is equal to $\left\Vert f(z)\right\Vert _{\mathbbmss k}$ when $r=1$ and gives a smaller quantity when $r\geq2$.
So, when $r=p=1$ we will write indifferently $\left\Vert f(z)\right\Vert _{\mathbbmss k}$
or $\left\Vert f(z)\right\Vert _{\mathbbmss k,1}$.
\item In the case of strictly pseudo-convex domains it is clear that the
integrability of $\left\Vert f(z)\right\Vert _{\mathbbmss k,p}$ is
equivalent to the integrability of $\left|f\right|^{p}+\frac{\left|f\wedge\overline{\partial}\rho\right|^{p}}{\sqrt{-\rho}}$.
\end{itemize}
\end{rem*}

\medskip{}

\begin{stthm}
\label{thm:d-bar-for-Nev}For all $\alpha>0$ and all $p\in\left[1,+\infty\right[$,
there exists a constant $C>0$ such that, for all smooth $\overline{\partial}$-closed
$\left(0,r\right)$-form $f$, $1\leq r\leq n-1$, on $\overline{\Omega}$,
there exists a solution of the equation $\overline{\partial}u=f$,
continuous on $\overline{\Omega}$ such that
\[
\int_{\Omega}\left|u\right|^{p}\delta_{\Omega}^{\alpha-1}d\lambda\leq C\frac{1}{\min\left\{ \alpha,1\right\} }\int_{\Omega}\left\Vert f\right\Vert _{\mathbbmss k,p}\delta_{\Omega}^{\alpha}d\lambda.
\]
\end{stthm}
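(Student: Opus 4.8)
The plan is to take for $T$ the explicit integral operator that already underlies \thmref{d-bar-q-gamma'-p-gamma-lip}, namely the Berndtsson--Andersson type operator built in \cite{CDMb} out of the local Diederich-Fornaess support functions and patched together by the gluing device of \cite{CDMb} which makes up for the absence of a globally defined support function on a non-convex domain. For a smooth $\overline{\partial}$-closed $(0,r)$-form $f$ on $\overline{\Omega}$, $u=Tf$ solves $\overline{\partial}u=f$ and is given by an absolutely convergent solid integral
\[
u(z)=\int_{\Omega}K(z,\zeta)\wedge f(\zeta)\,d\lambda(\zeta)
\]
(plus a lower-order piece coming from the gluing, treated exactly as in \cite{CDMb}); the uniform integrability of $K(z,\cdot)$ up to $\partial\Omega$ together with the smoothness of $f$ yields that $u$ is continuous on $\overline{\Omega}$, so only the weighted inequality remains.

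\emph{First step: a pointwise anisotropic bound on the kernel.} Combining the pointwise estimates for $K(z,\zeta)$ and for its components in a $(\zeta,\delta_{\Omega}(\zeta))$-extremal basis $(e_{i}(\zeta))$ --- the same estimates used in \cite{CDMb}, ultimately going back to \cite{Bruna-Charp-Dupain-Annals} --- with the comparisons $\sigma_{k}(\zeta)\simeq\tau_{k}(\zeta,\delta_{\Omega}(\zeta))$ provided by \lemref{Lemma-3-9} and with the very definition (\ref{eq:punctual-norm-k-p}) of $\left\Vert f(\zeta)\right\Vert _{\mathbbmss k,p}$, I would establish
\[
\left|K(z,\zeta)\wedge f(\zeta)\right|\ \lesssim\ \left\Vert f(\zeta)\right\Vert _{\mathbbmss k,p}^{1/p}\,G(z,\zeta),
\]
where $G\geq0$ is an explicit purely geometric kernel: the transversal directions of $K$ that are not paired with $f$ are precisely compensated by the denominator $\sigma_{1}(\zeta)\prod_{n-r+2\leq k\leq n}\sigma_{k}(\zeta)$ occurring in (\ref{eq:punctual-norm-k-p}). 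For $r\geq2$ this bookkeeping --- matching the $r$ paired directions and the $n-r+1$ transversal ones against the corresponding $\tau_{i}$'s --- must be carried out with care, exactly along the lines of the first item of the Remark following (\ref{eq:punctual-norm-k-p}).

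\emph{Second step: a double Hölder (Schur) argument.} Fixing a small $s>0$ and putting $\psi(z,\zeta)=\bigl(\delta_{\Omega}(\zeta)/\delta_{\Omega}(z)\bigr)^{s}$, apply Hölder's inequality in $\zeta$ with exponents $(p,p')$ to the factorisation $\left\Vert f(\zeta)\right\Vert _{\mathbbmss k,p}^{1/p}G=\bigl(\left\Vert f\right\Vert _{\mathbbmss k,p}G\psi\bigr)^{1/p}\bigl(G\psi^{-1/(p-1)}\bigr)^{1/p'}$ to get
\[
\left|u(z)\right|^{p}\ \leq\ \Bigl(\int_{\Omega}\left\Vert f(\zeta)\right\Vert _{\mathbbmss k,p}G(z,\zeta)\psi(z,\zeta)\,d\lambda(\zeta)\Bigr)\Bigl(\int_{\Omega}G(z,\zeta)\psi(z,\zeta)^{-1/(p-1)}\,d\lambda(\zeta)\Bigr)^{p-1}.
\]
Multiplying by $\delta_{\Omega}(z)^{\alpha-1}$, integrating in $z$ and applying Fubini then reduces the theorem to two one-variable geometric estimates: (a) a uniform bound for the ``dual'' factor, $\int_{\Omega}G(z,\zeta)\bigl(\delta_{\Omega}(z)/\delta_{\Omega}(\zeta)\bigr)^{s/(p-1)}d\lambda(\zeta)\lesssim\delta_{\Omega}(z)^{b}$ for a suitable $b$, and (b) a weighted bound for the resulting $z$-integral, $\int_{\Omega}\delta_{\Omega}(z)^{\alpha-1+c}\,G(z,\zeta)\,d\lambda(z)\lesssim\frac{1}{\min\{\alpha,1\}}\,\delta_{\Omega}(\zeta)^{\alpha-s}$, with $s,b,c$ tuned so that the powers of $\delta_{\Omega}(\zeta)$ on the two sides of the final inequality match.

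\emph{Third step, and the main obstacle: the geometric estimates (a)--(b).} These are proved by decomposing $\Omega$ into dyadic pseudo-balls around $z$, respectively around $\zeta$, and summing, using the properties of $\tau$, of the $\sigma_{k}$ and \lemref{Lemma-3-9} --- the same machinery that yields the integral estimates behind \thmref{d-bar-q-gamma'-p-gamma-lip}. The factor $\frac{1}{\min\{\alpha,1\}}$ comes out of the radial integral $\int_{0}^{\delta_{0}}t^{\alpha-1}\,dt\simeq\alpha^{-1}$ near the boundary when $0<\alpha\leq1$, and is harmless ($\lesssim1$) for $\alpha>1$; all the remaining constants stay bounded uniformly in $\alpha$. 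The genuinely new difficulty compared with \thmref{d-bar-q-gamma'-p-gamma-lip} is that $G$ is \emph{anisotropic}, so (a)--(b) must be run while keeping the individual directional weights $\tau_{i}$ rather than a single scalar gain; once the pairing of the first step is set up correctly, however, they reduce to estimates of exactly the same type as those already in hand. For $p=1$ the Hölder split is trivial and the argument recovers (the lineally convex version of) A. Cumenge's theorem \cite{Cumenge-Navanlinna-convex}, while, specialised to a strictly pseudoconvex $\Omega$, it gives back the statement of \cite{Cha80} quoted above, as the last item of the Remark explains; the application to weighted Bergman projections then proceeds as in \cite{CDM}.
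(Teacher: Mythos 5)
Your overall architecture (solve with the Berndtsson--Andersson operator of \cite{CDMb}, bound the kernel part pointwise using the anisotropic geometry, apply H\"older and Fubini, and finish with dyadic corona estimates via \lemref{Lemma-3-9}) is in the right spirit, but the proposal leaves out the two points that constitute the actual content of the proof. First, the measurability issue, which the paper treats as the central obstacle: your Step 1 works with the components of $K_{N}^{1}$ and of $f$ in a $\left(\zeta,\delta_{\Omega}(\zeta)\right)$-extremal basis, and your Step 3 insists on ``keeping the individual directional weights $\tau_{i}$'' at all dyadic scales; but extremal bases cannot be chosen continuously (or even obviously measurably) in $\zeta$, so neither the frame-dependent decomposition of $K_{N}^{1}\wedge f$ nor a majorant $G$ built from the $\tau_{i}(\zeta,2^{t}\delta_{\Omega}(\zeta))$ is known to be a measurable function of $\zeta$ without further work. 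The paper's proof is organized precisely around this: it introduces the null sets $F$ and $\Gamma$, the components $\Omega_{i}$ with reference points $Z_{i}$, the coronas $R_{k}^{i}$, and the locally constant frames $L_{j}(\zeta)$ (extremal at $Z_{i}$ at scale $2^{k}\delta_{\Omega}(Z_{i})$, hence only \emph{near}-extremal at $\zeta$, with the comparison supplied by \lemref{3.4-maj-deriv-rho-equiv-tho-i-z-zeta}), exactly so that H\"older and Fubini can be legitimately applied. If you want to keep your ``scalar majorant'' route you must either reproduce such a construction or show that $G$ can be expressed through manifestly measurable substitutes (e.g.\ $\sigma_{k}$-type integrals at each scale), and your text does neither.

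Second, the quantitative core is deferred rather than proved. The paper's H\"older step is different from yours and avoids your estimate (a) entirely: it splits $\left|K\right|\left|f_{I}\right|=\left(\left|K\right|\left|f_{I}\right|^{p}\right)^{\nicefrac{1}{p}}\left|K\right|^{\nicefrac{1}{p'}}$ and uses the uniform integrability $\int_{\Omega}\left|K_{N}^{1}(z,\zeta)\right|d\lambda(\zeta)\lesssim1$ from \cite{CDMb} for the second factor, so that only one family of $z$-integrals over the coronas $Q_{t}(\zeta)$ remains; there the weights of (\ref{eq:punctual-norm-k-p}) cancel exactly against the kernel bound, yielding (\ref{eq:maj-K-I-f-I-anisotrope}) with the single factor $\delta_{\Omega}(\zeta)\left\Vert f(\zeta)\right\Vert _{\mathbbmss k,p}$, and (\ref{eq:integral-mod(z-zeta)-incerse-delta-alpha-1}) then produces $\delta_{\Omega}^{\alpha}(\zeta)/\min\left\{ \alpha,1\right\} $; the scale change from $2^{t}\delta_{\Omega}(\zeta)$ to $\delta_{\Omega}(\zeta)$ via (\ref{eq:comp-tau-epsilon-tau-lambda-epsilon}) costs a factor $2^{tr}$ which is absorbed only because $N$ is taken $\geq\alpha+r+3$ (so the operator depends on $\alpha$). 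In your scheme, by contrast, the pointwise bound at exponent $\nicefrac{1}{p}$, the dual Schur estimate (a) with the auxiliary weight $\psi$, and the weighted estimate (b) all remain to be verified, with $s,b,c$ ``tuned so that the powers match'' and with constants uniform in $\alpha$ except for the explicit $1/\min\left\{ \alpha,1\right\} $; none of this is checked, and it is not ``of exactly the same type as those already in hand'' since the anisotropic factors extracted at power $\nicefrac{1}{p}$ change the exponent bookkeeping in both (a) and (b). As it stands the proposal is a plausible program, not a proof.
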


\begin{rem*}
Related (but non comparable) estimates can be found in \cite[Theorem 4.1]{AC00}
for strictly pseudoconvex domains and in \cite[Theorem 2.8]{Ale11}
for convex domains of finite type.
\end{rem*}

This result can be extended to $\alpha=0$ to get an $L^{p}(\partial\Omega)$
estimate (for $p=1$ this was done in \cite{CDMb}):
\begin{stthm}
\label{thm:d-bar-boundary-0-q-forms}For $p\in\left[1,+\infty\right[$
there exists a constant $C>0$ such that, for all smooth $\overline{\partial}$-closed
$\left(0,r\right)$-form $f$, $1\leq r\leq n-1$, on $\overline{\Omega}$,
there exists a solution of the equation $\overline{\partial}u=f$,
continuous on $\overline{\Omega}$ such that
\[
\int_{\partial\Omega}\left|u\right|^{p}d\sigma\leq C\int_{\Omega}\left\Vert f\right\Vert _{\mathbbmss k,p}d\lambda.
\]
\end{stthm}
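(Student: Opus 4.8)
The plan is to deduce \thmref{d-bar-boundary-0-q-forms} from \thmref{d-bar-for-Nev} by letting $\alpha\to0^{+}$: the factor $\frac{1}{\min\{\alpha,1\}}=\frac{1}{\alpha}$ that blows up is exactly compensated by the mass of the weight $\delta_{\Omega}^{\alpha-1}$ concentrating on $\partial\Omega$. I would fix the solution operator $T$ produced by the proof of \thmref{d-bar-for-Nev}; it is built from the integral representation of \cite{CDMb} and does not depend on $\alpha$, so that for a smooth $\overline{\partial}$-closed $(0,r)$-form $f$ on $\overline{\Omega}$ the function $u=Tf$ is continuous on $\overline{\Omega}$, solves $\overline{\partial}u=f$, and satisfies
\[
\alpha\int_{\Omega}|u|^{p}\,\delta_{\Omega}^{\alpha-1}\,d\lambda\ \le\ C\int_{\Omega}\|f\|_{\mathbbmss k,p}\,\delta_{\Omega}^{\alpha}\,d\lambda\qquad\text{for every }\alpha\in(0,1].
\]
For smooth $f$ the density $\|f\|_{\mathbbmss k,p}$ is integrable on $\Omega$ --- its only possibly singular factor is an integrable negative power of $\delta_{\Omega}$, cf. the remarks preceding \thmref{d-bar-for-Nev} --- and $\|f\|_{\mathbbmss k,p}\,\delta_{\Omega}^{\alpha}\le\max(1,\sup_{\Omega}\delta_{\Omega})\,\|f\|_{\mathbbmss k,p}$ for $\alpha\le1$, so dominated convergence shows the right-hand side above tends to $C\int_{\Omega}\|f\|_{\mathbbmss k,p}\,d\lambda<\infty$ as $\alpha\to0^{+}$.

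It then remains to prove that $\alpha\int_{\Omega}|u|^{p}\,\delta_{\Omega}^{\alpha-1}\,d\lambda\to\int_{\partial\Omega}|u|^{p}\,d\sigma$. I would split $\Omega$ into a collar $\{0<\delta_{\Omega}<\delta_{0}\}$, on which $\delta_{\Omega}$ is smooth with $|\nabla\delta_{\Omega}|\equiv1$, and the set $\{\delta_{\Omega}\ge\delta_{0}\}$, on which $\delta_{\Omega}^{\alpha-1}\le\delta_{0}^{\alpha-1}$ is bounded uniformly in $\alpha\in(0,1]$, so that $\alpha$ times the integral over it tends to $0$. On the collar the coarea formula gives $\alpha\int_{\{0<\delta_{\Omega}<\delta_{0}\}}|u|^{p}\,\delta_{\Omega}^{\alpha-1}\,d\lambda=\alpha\int_{0}^{\delta_{0}}t^{\alpha-1}G(t)\,dt$ with $G(t)=\int_{\{\delta_{\Omega}=t\}}|u|^{p}\,d\mathcal{H}^{2n-1}$; since $u$ is continuous on $\overline{\Omega}$ and the hypersurfaces $\{\delta_{\Omega}=t\}$ converge smoothly to $\partial\Omega$, $G$ is continuous at $0$ with $G(0)=\int_{\partial\Omega}|u|^{p}\,d\sigma$. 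As $\alpha\int_{0}^{\delta_{0}}t^{\alpha-1}\,dt=\delta_{0}^{\alpha}\to1$ while the measures $\alpha t^{\alpha-1}\mathbbm{1}_{(0,\delta_{0})}(t)\,dt$ concentrate at $t=0$, a standard Abelian argument gives $\alpha\int_{0}^{\delta_{0}}t^{\alpha-1}G(t)\,dt\to G(0)$. Combining the three limits yields $\int_{\partial\Omega}|u|^{p}\,d\sigma\le C\int_{\Omega}\|f\|_{\mathbbmss k,p}\,d\lambda$.

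The only delicate point is that one and the same solution $u=Tf$ must satisfy the hypothesis of \thmref{d-bar-for-Nev} for all $\alpha$ simultaneously; this is why the explicit operator of \cite{CDMb} must be used rather than ``some solution'', since two solutions continuous up to $\partial\Omega$ differ by a function holomorphic in $\Omega$, which can have arbitrarily large boundary norm. Everything else is routine; for $p=1$ this recovers the boundary estimate of \cite{CDMb}, and as an alternative one could rerun the kernel estimates behind \thmref{d-bar-for-Nev} directly at $\alpha=0$, but the limiting argument above is shorter.
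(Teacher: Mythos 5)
Your proposal is correct, but it follows a genuinely different route from the paper. The paper does not let $\alpha\to0^{+}$ in \thmref{d-bar-for-Nev}: it reruns the kernel estimates of the preceding subsection with the weight $\delta_{\Omega}^{\alpha-1}d\lambda$ replaced by the surface measure of the level sets $\left\{ \rho=-\varepsilon\right\} $ --- same sets $F$ and $\Gamma$, same locally constant almost-extremal bases $L_{j}(\zeta)$, and the key bound (\ref{eq:maj-K-I-f-I-anisotrope}) --- obtaining $\int_{\left\{ \rho=-\varepsilon\right\} \setminus F}\left|u\right|^{p}d\sigma_{\varepsilon}\lesssim\int_{\Omega}\left\Vert f\right\Vert _{\mathbbmss k,p}d\lambda$ uniformly in (almost every small) $\varepsilon$, and then concludes by continuity of $u$ on $\overline{\Omega}$; this is exactly the alternative you mention at the end. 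Your Abelian limit is shorter and uses \thmref{d-bar-for-Nev} essentially as a black box; the collar/coarea computation, the concentration of the measures $\alpha t^{\alpha-1}dt$ at $t=0$, and the integrability of $\left\Vert f\right\Vert _{\mathbbmss k,p}$ for smooth $f$ (it is $O\bigl(\delta_{\Omega}^{\nicefrac{1}{m}-1}\bigr)$ for bounded coefficients, by the extremal-basis expression in the remark) are all fine. The one point you must make explicit is the uniformity in $\alpha$ of the constant: as stated, \thmref{d-bar-for-Nev} reads ``for all $\alpha>0$ \ldots{} there exists $C$'', so a single $C$ valid for all $\alpha\in\left]0,1\right]$ does not follow from the statement alone, and fixing the operator $T$ does not by itself provide it. It does follow from the proof of \thmref{d-bar-for-Nev}: a fixed $N\geq r+4$ satisfies the requirement $N\geq\alpha+r+3$ for every $\alpha\leq1$, and the factor $\nicefrac{1}{\alpha}$ in (\ref{eq:integral-mod(z-zeta)-incerse-delta-alpha-1}) is extracted with an implied constant independent of $\alpha$, so the inequality you display indeed holds with one constant on $\left]0,1\right]$ for the fixed operator. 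With that observation inserted your argument is complete; what the paper's direct route buys in exchange is the slightly stronger uniform estimate on every hypersurface $\left\{ \rho=-\varepsilon\right\} $, without any appeal to uniformity in $\alpha$.
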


\bigskip{}

For $r=1$ this result is weaker than the one which can be obtained
using Carleson measure of order $\beta$. Before stating our last
estimate let us recall these notions.

A measure $\mu$ in $\Omega$ is called a Carleson measure if
\[
\left\Vert \mu\right\Vert _{W^{1}}:=\sup_{z\in\partial\Omega,\,\varepsilon>0}\frac{\mu\left(P_{\varepsilon}(z)\cap\Omega\right)}{\sigma\left(P_{\varepsilon}(z)\cap\partial\Omega\right)}<+\infty,
\]
where $P_{\varepsilon}(z)$ is the extremal polydisk defined below
in (\ref{eq:def-polydisk}) and $\sigma$ the surface measure on $\partial\Omega$.
$W^{1}(\Omega)$ will denote the space of Carleson measures on $\Omega$.
Then, for $\beta\in\left]0,1\right[$ the space $W^{\beta}(\Omega)$
is the complex interpolated space between the space of bounded measures
on $\Omega$, denoted usually $W^{0}(\Omega)$, and $W^{1}(\Omega)$.
Moreover, we denote by $BMO(\partial\Omega)$ the BMO-space associated
to the anisotropic geometry on $\partial\Omega$. Then:
\begin{stthm}
\label{thm:Carleson-estimates-for-d-bar}There exists a constant $C>0$
such that, for all $p\in\left[1,+\infty\right[$ and all smooth $\overline{\partial}$-closed
$\left(0,1\right)$-form $f$ on $\overline{\Omega}$, there exists
a solution of the equation $\overline{\partial}u=f$, continuous on
$\overline{\Omega}$ such that:
\begin{itemize}
\item $\left\Vert u\right\Vert _{BMO(\partial\Omega)}\leq C\left\Vert \left\Vert f(\zeta)\right\Vert _{\mathbbmss k}d\lambda\right\Vert _{W^{1}}$;
\item $\left\Vert u\right\Vert _{L^{p}(\partial\Omega)}\leq C\left\Vert \left\Vert f(\zeta)\right\Vert _{\mathbbmss k}d\lambda\right\Vert _{W^{1-\nicefrac{1}{p}}}$.
\end{itemize}
\end{stthm}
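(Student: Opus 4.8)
The plan is to route everything through the explicit integral solution operator $T$ of \cite{CDMb} already used for \thmref{d-bar-for-Nev} and \thmref{d-bar-boundary-0-q-forms}, transferring the analytic content onto a nonnegative majorant so that complex interpolation becomes available. Write $u=Tf=\int_{\Omega}K(z,\zeta)\wedge f(\zeta)$; the pointwise kernel estimates of \cite{CDMb} yield a nonnegative kernel $\mathcal{K}(z,\zeta)$, built from the anisotropic quantities $\tau_{k}(\cdot,\delta_{\Omega})$, the extremal polydisks of \eqref{def-polydisk} and the Diederich--Fornaess support function, such that
\[
\left|Tf(z)\right|\ \le\ \int_{\Omega}\mathcal{K}(z,\zeta)\,\|f(\zeta)\|_{\mathbbmss k}\,d\lambda(\zeta)\ =:\ S\mu(z),\qquad\mu:=\|f(\zeta)\|_{\mathbbmss k}\,d\lambda .
\]
Since $S$ is a \emph{linear} positive operator on measures, it suffices to prove the two endpoint bounds $\|S\mu\|_{L^{1}(\partial\Omega)}\lesssim\|\mu\|_{W^{0}}$ and $\|S\mu\|_{BMO(\partial\Omega)}\lesssim\|\mu\|_{W^{1}}$ for every nonnegative $\mu$, and then interpolate. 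The restriction to $(0,1)$-forms is what keeps this clean: for $r=1$ the product over $\{k:n-r+2\le k\le n\}$ in \eqref{punctual-norm-k-p} is empty, so $\|f(z)\|_{\mathbbmss k}$ collapses to $\max_{i}|\langle f(z);\overline{e_{i}}\rangle|\,\tau_{i}(z,\delta_{\Omega}(z))/\delta_{\Omega}(z)$ in a $\left(z,\delta_{\Omega}(z)\right)$-extremal basis, and these geometric weights pair exactly with the $\overline{\partial}$-kernel.

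The first endpoint is nothing but \thmref{d-bar-boundary-0-q-forms} in the case $p=r=1$: by Fubini it reduces to the uniform bound $\int_{\partial\Omega}\mathcal{K}(z,\zeta)\,d\sigma(z)\lesssim 1$, whence $\|S\mu\|_{L^{1}(\partial\Omega)}\lesssim\mu(\Omega)=\|\mu\|_{W^{0}}$ for all $\mu\ge 0$. The second endpoint is the core of the argument and is proved by the usual local/non-local decomposition against a boundary ball. Fix $z_{0}\in\partial\Omega$, a scale $\varepsilon>0$, the extremal polydisk $P=P_{\varepsilon}(z_{0})$ and a fixed dilate $\widetilde{P}$ chosen large enough for the engulfing property of the family $\{P_{\varepsilon}(z)\}$; write $f=\chi_{\widetilde P}f+\chi_{\Omega\setminus\widetilde P}f=:f'+f''$ and $u=Tf'+Tf''$. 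For $Tf'$, the $L^{1}$ bound just obtained together with the Carleson inequality $\mu(\widetilde P\cap\Omega)\lesssim\|\mu\|_{W^{1}}\,\sigma(P\cap\partial\Omega)$ gives $\sigma(P\cap\partial\Omega)^{-1}\int_{P\cap\partial\Omega}|Tf'|\,d\sigma\lesssim\|\mu\|_{W^{1}}$. For $Tf''$, let $\tilde z_{0}\in\Omega$ be the point lying at distance $\simeq\varepsilon$ from $z_{0}$ along the inner normal; one estimates the oscillation $|Tf''(z)-Tf''(\tilde z_{0})|$ for $z\in P\cap\partial\Omega$ by a mean-value inequality using the pointwise bounds of \cite{CDMb} on the $z$-derivatives of $K$, then decomposes $\Omega\setminus\widetilde P$ into the anisotropic coronas $P_{2^{j+1}\varepsilon}(z_{0})\setminus P_{2^{j}\varepsilon}(z_{0})$ and sums; each corona contributes $\lesssim 2^{-\eta j}\|\mu\|_{W^{1}}$ for some $\eta>0$, the extra smallness supplied by the derivative beating the Carleson-allowed growth $\mu(P_{2^{j}\varepsilon}(z_{0})\cap\Omega)\lesssim\|\mu\|_{W^{1}}\,\sigma(P_{2^{j}\varepsilon}(z_{0})\cap\partial\Omega)$. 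This step uses the quasi-metric/engulfing structure of the polydisks and the comparabilities $\sigma_{k}\simeq\tau_{k}$ recorded after \eqref{punctual-norm-k-p}. Choosing $c=Tf''(\tilde z_{0})$ as the constant to subtract and adding the two pieces gives $\sigma(P\cap\partial\Omega)^{-1}\int_{P\cap\partial\Omega}|u-c|\,d\sigma\lesssim\|\mu\|_{W^{1}}$, and the supremum over $(z_{0},\varepsilon)$ controls $\|u\|_{BMO(\partial\Omega)}$.

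With both endpoints in hand, the $L^{p}$ statement follows by complex interpolation: by definition $W^{1-\nicefrac{1}{p}}=(W^{0},W^{1})_{1-\nicefrac{1}{p}}$, while on the space of homogeneous type $\left(\partial\Omega,\text{anisotropic balls},d\sigma\right)$ one has $\left(L^{1}(\partial\Omega),BMO(\partial\Omega)\right)_{1-\nicefrac{1}{p}}=L^{p}(\partial\Omega)$ (standard Fefferman--Stein/Stampacchia interpolation). Applying the interpolation theorem to the single linear operator $S$ gives $\|S\mu\|_{L^{p}(\partial\Omega)}\lesssim\|\mu\|_{W^{1-\nicefrac{1}{p}}}$, hence $\|u\|_{L^{p}(\partial\Omega)}\lesssim\bigl\|\|f(\zeta)\|_{\mathbbmss k}\,d\lambda\bigr\|_{W^{1-\nicefrac{1}{p}}}$, the $BMO$ bullet being the $p=\infty$ endpoint already proved. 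Continuity of $u=Tf$ on $\overline{\Omega}$ for smooth $\overline{\partial}$-closed $f$ is inherited from the construction of $T$, exactly as in the preceding theorems.

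The genuinely delicate point is the oscillation estimate for $Tf''$: it rests on sharp \emph{anisotropic} pointwise bounds for the derivatives of the Berndtsson--Andersson-type kernel of \cite{CDMb} --- organized, as throughout that method, so as to accommodate the fact that the Diederich--Fornaess support function is only locally defined --- together with a careful accounting of the finite-type geometry so that the dyadic sum over coronas converges with a genuine gain. Once that and \thmref{d-bar-boundary-0-q-forms} are available, the $L^{1}$ endpoint and the interpolation are soft.
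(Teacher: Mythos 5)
Your overall architecture (reduce to a $BMO(\partial\Omega)$ endpoint, split locally/non-locally against an anisotropic boundary ball, count dyadic coronas with the Carleson hypothesis, interpolate between the $\left(L^{1},W^{0}\right)$ and $\left(BMO,W^{1}\right)$ endpoints) is the same as the paper's, and your $W^{0}$ endpoint via the uniform bound $\int_{\partial\Omega}\mathcal{K}(z,\zeta)\,d\sigma(z)\lesssim1$ is indeed what the proof of \thmref{d-bar-boundary-0-q-forms} provides. The genuine gap is in the step you yourself flag as delicate: you keep the operator $T$ built with the form $s$ of (\ref{eq:basic-choice-for-s(z,zeta)}), the one used for theorems \ref{thm:d-bar-for-Nev} and \ref{thm:d-bar-boundary-0-q-forms}, and you assert that its $z$-derivative bounds give a gain $2^{-\eta j}$ per corona against the Carleson-allowed mass. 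For that kernel all available estimates (\lemref{3.8-basic-estimates-kernel}) carry the isotropic factor $\left|z-\zeta\right|^{-(2r-1)}$ and, correspondingly, only $\prod_{i\leq n-1}\tau_{i}^{2}$ (for $r=1$) in the denominator. Those bounds are adequate when integrated in $z$ or $\zeta$ against Lebesgue measure via \lemref{Lemma-3-9}, which is why theorems \ref{thm:d-bar-for-Nev} and \ref{thm:d-bar-boundary-0-q-forms} go through; but the Carleson argument forces a sup-in-$\zeta$ bound of size $2^{-\eta j}\sigma\left(P_{2^{j}\varepsilon}(z_{0})\cap\partial\Omega\right)^{-1}$, because the data $\left\Vert f\right\Vert _{\mathbbmss k}d\lambda$ can only be used through its total mass on each corona. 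Taking the sup over the corona of the old-kernel bound one gets instead, at worst, $\frac{1}{\prod_{i\leq n-1}\tau_{i}^{2}\,\tau_{n}}\simeq\frac{\tau_{n}\left(z_{0},2^{j}\varepsilon\right)}{2^{j}\varepsilon}\cdot\frac{1}{\sigma\left(P_{2^{j}\varepsilon}(z_{0})\cap\partial\Omega\right)}$, an overshoot by $\tau_{n}/(2^{j}\varepsilon)\gtrsim\left(2^{j}\varepsilon\right)^{\nicefrac{1}{m}-1}$ which is unbounded as $\varepsilon\to0$, so the resulting estimate is not uniform over the scales of boundary balls and no choice of $N$ repairs it. This is exactly why the paper changes the kernel here, taking the Diederich--Mazzilli type choice
\[
s(z,\zeta)=-\rho(z)\sum\left(\overline{\zeta_{i}}-\overline{z_{i}}\right)d\left(\zeta_{i}-z_{i}\right)+\chi(z)\overline{S(\zeta,z)}\sum Q_{i}\left(\zeta,z\right)d\left(\zeta_{i}-z_{i}\right),
\]
so that for $z\in\partial\Omega$ the denominator becomes $\left|S(\zeta,z)\right|^{2}$ and the kernel and its $z$-derivatives are controlled purely by products of $\tau_{j}$'s, yielding the corona bounds $2^{-i(N-1)}/\sigma$ and $2^{-\nicefrac{i}{m}}/\sigma$ that the argument needs. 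Your proposal supplies neither this modification nor a substitute for it, so the core of the BMO endpoint is asserted rather than proved, and with the operator you specify it is not justified by the paper's lemmas.

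Two further consequences of that choice are also skipped. Continuity of $u$ up to $\overline{\Omega}$ is no longer "inherited": once $s$ is changed the paper must prove a new uniform-integrability lemma ($\int_{P_{\varepsilon}(z)}\left|K_{N}\right|d\lambda\lesssim\varepsilon^{\nicefrac{1}{m+1}}$) for the new kernel. And in the oscillation estimate the mean value cannot be taken along a single segment: for $z,w$ in the boundary ball the chord need not lie in $\overline{\Omega}$ since $\Omega$ is only lineally convex, which is why the paper introduces the pushed-in points $Z,W$ (translations by $c_{1}\varepsilon$ along the inward normal at $z_{0}$) and works on the three segments $\left[z,Z\right]$, $\left[Z,W\right]$, $\left[W,w\right]$ inside a fixed dilate $P_{c_{2}\varepsilon}\left(z_{0}\right)$; your comparison of $Tf''(z)$ with $Tf''(\tilde z_{0})$ along $\left[z,\tilde z_{0}\right]$ has the same problem. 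Finally, if you interpolate the single positive operator $S$, the BMO endpoint must be proved for $S\mu$ itself, so the cancellation $K(z,\zeta)-K(w,\zeta)$ has to be run on each smooth constituent of the majorant kernel before taking absolute values; this is routine but should be stated. The interpolation step itself, and the local term handled through the $W^{1}$ inequality on the enlarged polydisk, are consistent with the paper's (terse) treatment.
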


This type of estimates where originally obtained, for strictly pseudoconvex
domains, by E. Amar \& A. Bonami in \cite{AB79} and extended to convex
domains of finite type by N. Nguyen in \cite{Ngu01} and W. Alexandre
in \cite{Ale11}.

\bigskip{}

A classical application of \thmref{d-bar-for-Nev} (for $p=1$ and
$r=1$) is the extension to lineally convex domains of the characterization
of the zero sets of the weighted Nevanlinna classes (called Nevanlinna-Djrbachian
classes in \cite{Cumenge-Navanlinna-convex}) obtained by A. Cumenge
for convex domains:
\begin{stthm}
A divisor $\mathcal{D}$ in $\Omega$ can be defined by a holomorphic
function $f$ satisfying $\int_{\Omega}\ln^{+}\left|f\right|\delta_{\Omega}^{\alpha-1}d\lambda<+\infty$,
$\alpha>0$, if and only if it satisfies the generalized Blaschke
condition $\int_{\mathcal{D}}\delta^{\alpha+1}d\lambda_{2n-2}<+\infty$.
\end{stthm}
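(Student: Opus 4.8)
The plan is to follow the classical Skoda--Henkin scheme, in the form worked out for convex domains of finite type by A.~Cumenge in \cite{Cumenge-Navanlinna-convex}: necessity from the Poincar\'e--Lelong equation together with a weighted Jensen-type identity, and sufficiency by producing a potential of $\mathcal{D}$ with controlled boundary growth and then turning it into a holomorphic function by solving $\overline{\partial}$. Cumenge's convexity tools are replaced by the lineally convex machinery of \cite{CDMb}, and her weighted $\overline{\partial}$-estimate by \thmref{d-bar-for-Nev} used with $p=r=1$. For necessity, suppose $\mathcal{D}$ is the divisor of $f\in\mathcal{O}(\Omega)$ with $\int_{\Omega}\ln^{+}\left|f\right|\delta_{\Omega}^{\alpha-1}d\lambda<+\infty$. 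I would start from $\frac{i}{\pi}\partial\overline{\partial}\log\left|f\right|=[\mathcal{D}]$, whose trace measure is comparable to the Euclidean $(2n-2)$-area $d\lambda_{2n-2}$ on $\mathcal{D}$, pair it with a test function equivalent to $\delta_{\Omega}^{\alpha+1}$ and integrate by parts twice, using $\Delta(\delta_{\Omega}^{\alpha+1})\simeq\alpha(\alpha+1)\delta_{\Omega}^{\alpha-1}$ near $\partial\Omega$ (the term $\int_{\Omega}\ln^{-}\left|f\right|\delta_{\Omega}^{\alpha-1}d\lambda$ being absorbed on a boundary collar). This should yield
\[
\int_{\mathcal{D}}\delta_{\Omega}^{\alpha+1}\,d\lambda_{2n-2}\;\lesssim\;\frac{1}{\min\{\alpha,1\}}\int_{\Omega}\ln^{+}\left|f\right|\delta_{\Omega}^{\alpha-1}\,d\lambda+C,
\]
the boundary contributions being controlled by carrying out the computation on the sublevel domains $\Omega_{t}=\{\rho<t\}$ (all lineally convex of type $\le m$) and letting $t\uparrow0$, with $C$ coming from a fixed interior value of $\log\left|f\right|$; the generalized Blaschke condition follows at once.

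For sufficiency, assume $\int_{\mathcal{D}}\delta_{\Omega}^{\alpha+1}d\lambda_{2n-2}<+\infty$. I would fix a locally finite cover $\{U_{i}\}$ of $\Omega$ by balls $B(p_{i},c\,\delta_{\Omega}(p_{i}))$ with $\mathcal{D}\cap U_{i}=\{f_{i}=0\}$, $f_{i}\in\mathcal{O}(U_{i})$ suitably normalized, and a subordinate partition of unity $\{\chi_{i}\}$; then set $u=\sum_{i}\chi_{i}\log\left|f_{i}\right|$ and, on $U_{j}$, $w_{j}=\sum_{i}\chi_{i}\log(f_{i}/f_{j})$, so that $w_{j}-w_{k}=\log(f_{k}/f_{j})$ on overlaps. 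Since a bounded lineally convex domain is topologically trivial and $\Omega$ is Stein, the integer monodromy obstruction in $H^{2}(\Omega;\mathbb{Z})$ vanishes, and the functions $f_{j}e^{w_{j}}$ glue to a global $\mathcal{C}^{\infty}$ function $F$ on $\Omega$ with divisor $\mathcal{D}$ and $\left|F\right|=e^{u}$, while $\beta:=\overline{\partial}F/F$ is a globally defined $\overline{\partial}$-closed $(0,1)$-form, smooth in $\Omega$ and equal on $U_{j}$ to $\sum_{i}(\overline{\partial}\chi_{i})\log(f_{i}/f_{j})$. Any solution $\xi$ of $\overline{\partial}\xi=\beta$ then makes $f:=Fe^{-\xi}$ holomorphic with divisor $\mathcal{D}$ and $\log\left|f\right|=u-\mathrm{Re}\,\xi$. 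Reading the Jensen-type identity of the first part backwards turns the Blaschke hypothesis into $\int_{\Omega}\left|u\right|\delta_{\Omega}^{\alpha-1}d\lambda<+\infty$, so everything comes down to finding $\xi$ with $\int_{\Omega}\left|\xi\right|\delta_{\Omega}^{\alpha-1}d\lambda<+\infty$.

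For this I would invoke \thmref{d-bar-for-Nev} with $p=r=1$ and exponent $\alpha$, which gives a solution $\xi$ with
\[
\int_{\Omega}\left|\xi\right|\delta_{\Omega}^{\alpha-1}\,d\lambda\;\le\;\frac{C}{\min\{\alpha,1\}}\int_{\Omega}\left\Vert \beta\right\Vert _{\mathbbmss k}\delta_{\Omega}^{\alpha}\,d\lambda,
\]
and the point is that the Blaschke hypothesis also yields $\int_{\Omega}\left\Vert \beta\right\Vert _{\mathbbmss k}\delta_{\Omega}^{\alpha}d\lambda\lesssim\int_{\mathcal{D}}\delta_{\Omega}^{\alpha+1}d\lambda_{2n-2}+C<+\infty$, the anisotropic norm $\left\Vert \cdot\right\Vert _{\mathbbmss k}$ being exactly the gauge in which the size of $\beta$ near $\partial\Omega$ is governed by the way $\mathcal{D}$ approaches the boundary. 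Combined with $\int_{\Omega}\left|u\right|\delta_{\Omega}^{\alpha-1}d\lambda<+\infty$, this gives $\int_{\Omega}\ln^{+}\left|f\right|\delta_{\Omega}^{\alpha-1}d\lambda<+\infty$. Since $\beta$ is only smooth inside $\Omega$ while \thmref{d-bar-for-Nev} is stated for data smooth on $\overline{\Omega}$, I would actually carry out this last step over an exhaustion $\Omega_{t}\uparrow\Omega$ with constants uniform in $t$ --- legitimate because all the $\Omega_{t}$ are lineally convex of type $\le m$ --- and conclude by a normal-families passage to the limit using the uniform bound on $\int_{\Omega_{t}}\ln^{+}|f^{(t)}|\delta_{\Omega_{t}}^{\alpha-1}d\lambda$.

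The main obstacle, in both implications, is the weighted Jensen-type identity: one must keep precise track of the boundary terms in the integrations by parts and control the Green functions of the sublevel domains $\Omega_{t}$ uniformly up to $\partial\Omega_{t}$ as $t\uparrow0$, which is exactly where the standing hypothesis on $\rho$ is used. A second, genuinely geometric point is the inequality $\int_{\Omega}\left\Vert \beta\right\Vert _{\mathbbmss k}\delta_{\Omega}^{\alpha}d\lambda\lesssim\int_{\mathcal{D}}\delta_{\Omega}^{\alpha+1}d\lambda_{2n-2}+C$, which forces a comparison of the anisotropic polydisc geometry near $\partial\Omega$ with the sizes of the transition data $\log(f_{i}/f_{j})$ and the derivatives of the $\chi_{i}$, and makes essential use of the finite-type structure.
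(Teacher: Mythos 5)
The paper itself deliberately gives no proof here: it declares the result a classical consequence of \thmref{d-bar-for-Nev} with $p=r=1$, following the Skoda--Henkin scheme as carried out in \cite{Bruna-Charp-Dupain-Annals}, \cite{DM01}, \cite{Cumenge-Navanlinna-convex} and \cite{CDMb}. Your overall architecture (necessity via Poincar\'e--Lelong and a weighted Green/Jensen identity; sufficiency by producing $\overline{\partial}$-data with anisotropic control and applying \thmref{d-bar-for-Nev}) is that scheme, and the necessity half and the final use of \thmref{d-bar-for-Nev} are fine in outline.

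The genuine gap is in how you manufacture the $(0,1)$-form $\beta$. With a partition of unity subordinate to Euclidean balls $B(p_{i},c\,\delta_{\Omega}(p_{i}))$ and \emph{arbitrary} local equations $f_{i}$ for $\mathcal{D}$, nothing bounds the transition data $\log(f_{i}/f_{j})$, nor the derivatives $\overline{\partial}\chi_{i}$ measured in the anisotropic gauge, in terms of the mass of $\mathcal{D}$; the local equations are only determined up to units, and no normalization is specified that would tie their size to $\int_{\mathcal{D}}\delta_{\Omega}^{\alpha+1}d\lambda_{2n-2}$. Consequently the two inequalities you flag as ``obstacles'' --- $\int_{\Omega}\left\Vert \beta\right\Vert _{\mathbbmss k}\delta_{\Omega}^{\alpha}d\lambda\lesssim\int_{\mathcal{D}}\delta_{\Omega}^{\alpha+1}d\lambda_{2n-2}$ and $\int_{\Omega}\left|u\right|\delta_{\Omega}^{\alpha-1}d\lambda<+\infty$ for $u=\sum_{i}\chi_{i}\log\left|f_{i}\right|$ --- are not side issues but the entire analytic content of the theorem beyond \thmref{d-bar-for-Nev}, and they are not reachable from your setup: in particular $\frac{i}{\pi}\partial\overline{\partial}u\neq[\mathcal{D}]$ (there are terms involving $\partial\chi_{i}$, $\overline{\partial}\chi_{i}$ and $\log(f_{i}/f_{j})$), so the Jensen identity cannot simply be ``read backwards'' to control $u$, and the negative part of $u$ near $\mathcal{D}$ is in no way governed by the Blaschke integral. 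The classical route avoids exactly this: one works with the current of integration $\Theta=[\mathcal{D}]$ itself and solves $d\theta=\Theta$ by an explicit homotopy/kernel formula, so that the $\overline{\partial}$-closed part $\theta^{0,1}$ and the potential are given by integrals of explicit kernels against $\Theta$; the pointwise estimate of $\left\Vert \theta^{0,1}(z)\right\Vert _{\mathbbmss k}$ then reduces, via Fubini and the anisotropic kernel estimates (this is where the finite-type polydisc geometry of \cite{CDMb} enters), to precisely the weighted bound by $\int_{\mathcal{D}}\delta_{\Omega}^{\alpha+1}d\lambda_{2n-2}$, and the same kernel estimates give the $\delta_{\Omega}^{\alpha-1}$-weighted $L^{1}$ bound on the potential. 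A smaller point: in your limiting argument over $\Omega_{t}$ the solutions $\xi_{t}$ are not holomorphic, so ``normal families'' does not literally apply; one passes to the limit using the uniform weighted bounds on $\log|f^{(t)}|$ (subharmonicity/weak compactness), which is standard but should be said correctly.
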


As the proof of such result using \thmref{d-bar-for-Nev} is very
classical we will not give any detail in this paper.

\medskip{}

The two propositions \ref{prop:d-bar-gain-weight} and \ref{prop:d-bar-gain-exponent}
will be used to generalize some estimates obtained for weighted Bergman
projections of convex domains of finite type in \cite{CDM} using
the method introduced in \cite{CPDY}:
\begin{stthm}
\label{thm:estimates-bergman}Let $D$ be a smoothly bounded convex
domain of finite type $m$ in $\mathbb{C}^{n}$. Let $\chi$ be any
$\mathcal{C}^{2}$ non negative function in $\overline{D}$ which
is equivalent to the distance $\delta_{D}$ to the boundary of $D$
and let $\eta$ be a strictly positive $\mathcal{C}^{1}$ function
on $\overline{D}$. Let $P_{\omega}$ be the (weighted) Bergman projection
of the Hilbert space $L^{2}\left(D,\omega\right)$ where $\omega=\eta\chi^{r}$
with $r$ a non negative rational number.Then:
\begin{enumerate}
\item For $p\in\left]1,+\infty\right[$ and $-1<\beta\leq r$, $P_{\omega}$
maps continuously $L^{p}\left(D,\delta_{D}^{\beta}\right)$ into itself.
\item For $0<\alpha\leq\nicefrac{1}{m}$ $P_{\omega}$ maps continuously
the Lipschitz space $\Lambda_{\alpha}(D)$ into itself.
\end{enumerate}
\end{stthm}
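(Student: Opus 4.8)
The plan is to follow the comparison method of \cite{CPDY}, using the weighted $\overline{\partial}$-estimates of Propositions \ref{prop:d-bar-gain-weight} and \ref{prop:d-bar-gain-exponent} as the analytic engine. First I would reduce to the model weight $\omega_{0}=\left(-\rho_{0}\right)^{r}$ from \cite{CDM}, for which the $L^{p}\left(D,\delta_{D}^{\beta}\right)$-continuity of $P_{\omega_{0}}$ for $-1<\beta\le r$ is already known (the hypothesis $0<\beta+1\le p(r+1)$ there covers $0<\beta+1\le r+1$). The link between $P_{\omega}$ and $P_{\omega_{0}}$ is the standard identity: writing $\omega=a\,\omega_{0}$ with $a=\eta\chi^{r}(-\rho_{0})^{-r}$ a strictly positive function comparable to a constant (since $\chi\simeq\delta_{D}\simeq-\rho_{0}$), one has on holomorphic functions the operator identity $P_{\omega}=P_{\omega}\circ P_{\omega_{0}}$ and $P_{\omega_{0}}=P_{\omega_{0}}\circ P_{\omega}$, so that $P_{\omega}-P_{\omega_{0}}=P_{\omega}(I-P_{\omega_{0}})=-P_{\omega}\,\overline{\partial}^{*}N\overline{\partial}(\cdot)$-type error terms; more precisely, for $u$ in the relevant space, $(I-P_{\omega_{0}})u$ is $\overline{\partial}$-exact and one writes $u-P_{\omega_{0}}u=\overline{\partial}$ of something, then projects.

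Concretely, the key step is to show that $A:=P_{\omega}-P_{\omega_{0}}$ is bounded on $L^{p}\left(D,\delta_{D}^{\beta}\right)$ with a \emph{gain}, so that a Neumann-series / perturbation argument closes. One writes $A u = P_{\omega}\big((a^{-1}-c)\,\text{(something)}\big)$ after inserting the solution operator $T$ of the $\overline{\partial}$-equation: since $\overline{\partial}(u-P_{\omega_{0}}u)=0$ is false in general, the correct route is to use that $P_{\omega}u=u-\overline{\partial}^{*}_{\omega}N_{\omega}\overline{\partial}u$ is awkward, so instead I would follow \cite{CPDY} literally and express $P_{\omega}-P_{\omega_{0}}$ via the Bergman kernels: $K_{\omega}(z,\zeta)-K_{\omega_{0}}(z,\zeta)$ satisfies, in $\zeta$, a $\overline{\partial}$-equation whose datum involves $(\overline{\partial}(a^{-1}))\,K_{\omega_{0}}$, and $\overline{\partial}(a^{-1})$ is a bounded form vanishing to no particular order but, crucially, the solution operator $T$ applied in the $\zeta$ variable produces, by Proposition \ref{prop:d-bar-gain-weight}, an extra factor $\delta_{D}^{1/m}$ (equivalently improves the weight), and by Proposition \ref{prop:d-bar-gain-exponent} an extra integrability $\varepsilon_{N}$. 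Either gain makes the operator norm of the difference, restricted to a small collar of $\partial D$, strictly less than $1$; on the (relatively compact) interior the difference is harmless since both projections are smoothing there.

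After that, parts (1) and (2) follow formally: for (1), $P_{\omega}=P_{\omega_{0}}+A$ with $\|A\|<1$ on the collar gives boundedness of $P_{\omega}$ on $L^{p}(D,\delta_{D}^{\beta})$ for $-1<\beta\le r$ directly from the $\beta\le r$ case of \cite[Theorem 2.1]{CDM}; for (2), one uses the same identity together with the Lipschitz-space estimate, noting that \thmref{d-bar-q-gamma'-p-gamma-lip}(3) (with $\gamma=\gamma'=0$, $p=\infty$) gives $T:L^{\infty}\to\Lambda_{1/m}$, so the error term $A$ maps $\Lambda_{\alpha}(D)$ into itself with a gain for $0<\alpha\le 1/m$, and $P_{\omega_{0}}$ preserves $\Lambda_{\alpha}(D)$ by the $p=\infty$ endpoint of the \cite{CDM} result (or by reproving it the same way). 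I expect the main obstacle to be making the ``gain'' argument for the difference of projections rigorous: one must commute $\overline{\partial}$ and the solution operator $T$ in the $\zeta$-variable through the kernel, control the non-holomorphic remainder, and check that the gain $\delta_{D}^{1/m}$ (resp.\ $\varepsilon_{N}$) genuinely survives the composition with $P_{\omega}$ in the weighted $L^{p}$ norm — this is exactly where the finite-type geometry and the sharp form of Propositions \ref{prop:d-bar-gain-weight}--\ref{prop:d-bar-gain-exponent} are needed, and where one must be careful that $\beta$ stays in the admissible range $-1<\beta\le r$ throughout the iteration.
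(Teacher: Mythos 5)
You have gathered the right ingredients (reduction to the model weight $\omega_{0}=(-\rho_{0})^{r}$ of \cite{CDM}, the comparison method of \cite{CPDY}, and Propositions \ref{prop:d-bar-gain-weight} and \ref{prop:d-bar-gain-exponent} as the analytic engine), but the mechanism you propose is not the one that works, and as written it has a genuine gap. You want to show that $A:=P_{\omega}-P_{\omega_{0}}$ has operator norm strictly less than $1$ (on a collar) and then close by a Neumann-series/perturbation argument. Nothing in the two propositions supports this: they give \emph{qualitative} gains (a weight improvement $\delta_{D}^{\nicefrac{1}{m}}$, or an exponent improvement $\varepsilon_{N}$), not smallness of any operator norm, and the two Bergman projections are global operators, so restricting attention to a small collar does not make their difference a contraction; there is simply no reason $\left\Vert A\right\Vert <1$. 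Your alternative kernel-level route (a $\overline{\partial}$-equation in $\zeta$ for $K_{\omega}-K_{\omega_{0}}$ with datum involving $\overline{\partial}(a^{-1})K_{\omega_{0}}$) is precisely the point you flag as the main obstacle and is never made precise; in particular you never exhibit a $\overline{\partial}$-closed form to which the solution operator $T$ may legitimately be applied, nor an identity that avoids circularity (bounding $P_{\omega}(I-P_{\omega_{0}})$ already presupposes boundedness of $P_{\omega}$).

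The paper's proof replaces your perturbation step by the exact identity of Proposition 3.1 of \cite{CPDY}: writing $\omega=\varphi\omega_{0}$ with $\varphi$ strictly positive and $\mathcal{C}^{1}$ on $\overline{D}$,
\[
\varphi P_{\omega}(u)=P_{\omega_{0}}(\varphi u)+\left(\mathrm{Id}-P_{\omega_{0}}\right)\circ T\left(P_{\omega}(u)\wedge\overline{\partial}\varphi\right),
\]
where the datum $P_{\omega}(u)\wedge\overline{\partial}\varphi$ is $\overline{\partial}$-closed because $P_{\omega}(u)$ is holomorphic; the crucial feature, absent from your sketch, is that the unknown $P_{\omega}(u)$ itself sits inside the error term, which turns the argument into a finite \emph{bootstrap} rather than a smallness argument. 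Starting from the a priori fact $P_{\omega}(f)\in L^{2}\left(D,\delta_{D}^{r}\right)$, one applies \propref{d-bar-gain-exponent} (with $\gamma=r$) finitely many times to climb from exponent $2$ to $p$ in $L^{\cdot}\left(D,\delta_{D}^{r}\right)$, using \thmref{regularity-P-omega-0} for $\mathrm{Id}-P_{\omega_{0}}$ at each step and dividing by $\varphi$; self-adjointness of $P_{\omega}$ handles $1<p\leq2$. Then \propref{d-bar-gain-weight} is used in the same way to lower the weight from $r$ to $\beta$ in steps of size at most $\nicefrac{1}{m}$ (this is where the restriction $-1<\beta\leq r$ enters). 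For part (2), your suggestion to use \thmref{d-bar-q-gamma'-p-gamma-lip}(3) with $\gamma=\gamma'=0$, $p=\infty$ would require $P_{\omega}(u)\wedge\overline{\partial}\varphi\in L^{\infty}$, which you do not know; instead, part (1) gives $P_{\omega}(u)\in L^{p}\left(D,\delta_{D}^{r}\right)$ for the $p$ determined by $\alpha=\frac{1}{m}\left[1-\frac{m(r+n)+2}{p}\right]$, and then \thmref{d-bar-q-gamma'-p-gamma-lip}(3) with $\gamma=r$ places $T\left(P_{\omega}(u)\wedge\overline{\partial}\varphi\right)$ in $\Lambda_{\alpha}(D)$, after which \thmref{regularity-P-omega-0} finishes the proof.
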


This theorem combined with theorems \ref{thm:d-bar-q-gamma'-p-gamma-lip}
and \ref{thm:d-bar-for-Nev} extends to weighted situations the Corollary
1.3 of \cite{Cumenge-estimates-holder}
\begin{cor*}
Let $f$ a $\overline{\partial}$-closed $\left(0,1\right)$-form
on $D$. Under the assumptions and notations of \thmref{estimates-bergman},
the solution $u$ of the equation $\overline{\partial}u=f$ which
is orthogonal to holomorphic functions in $L^{2}(D,\omega)$ ($\omega=\eta\chi^{r}$) satisfies
the following estimates:
\begin{enumerate}
\item If the coefficients of $f$ are in $L^{p}(D,\delta_{D}^{\gamma})$,
$-1<\gamma$ then:

\begin{enumerate}
\item $u\in L^{q}(D,\delta_{D}^{\gamma'})$, with $\frac{1}{q}=\frac{1}{p}-\frac{1-m(\gamma-\gamma')}{m(\gamma'+n)+2}$
and $-1<\gamma'\leq r$, $\gamma-\nicefrac{1}{m}\leq\gamma'\leq\gamma$,
if $1\leq p<\frac{m(\gamma'+n)+2}{1-m(\gamma-\gamma')}$, and $q>1$;
\item $u\in\Lambda_{\alpha}(D)$, with $\alpha=\frac{1}{m}\left[1-\frac{m(\gamma+n)+2}{p}\right]$,
if $p\in\left]m(\gamma+n)+2,+\infty\right]$.
\end{enumerate}
\item If $\left\Vert f\right\Vert _{\mathbbmss k,p}\delta_{D}^{\gamma+1}$
is in $L^{1}(D)$, $-1<\gamma\leq r$, then $u\in L^{p}\left(D,\delta_{D}^{\gamma}\right)$.
\end{enumerate}
\end{cor*}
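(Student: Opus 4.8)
The plan is to deduce the corollary from the $\overline{\partial}$-estimates of \thmref{d-bar-q-gamma'-p-gamma-lip} and \thmref{d-bar-for-Nev}, together with the mapping properties of the weighted Bergman projection $P_{\omega}$ given by \thmref{estimates-bergman}. The guiding remark is that the solution $u$ of $\overline{\partial}u=f$ orthogonal to $\mathcal{O}(D)$ in $L^{2}(D,\omega)$ differs from any other solution by an element of $\mathcal{O}(D)$: if $v$ is a solution built from one of our explicit operators, then $v-u\in\mathcal{O}(D)$, and since $u\perp\mathcal{O}(D)$ in $L^{2}(D,\omega)$ one has $u=v-P_{\omega}v=(I-P_{\omega})v$. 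Thus the proof reduces to two steps: (i) produce a particular solution $v$ lying in the target space by applying the appropriate $\overline{\partial}$-theorem with $r=1$; (ii) invoke \thmref{estimates-bergman} to keep $P_{\omega}v$ in that same space, so that $u=v-P_{\omega}v$ stays there as well, with the claimed norm bound.

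For part (1) I would fix a large integer $N$ with $\gamma\le N-2$ and set $v=Tf$, where $T$ is the operator of \thmref{d-bar-q-gamma'-p-gamma-lip}. In case (a) the hypotheses $-1<\gamma'$, $\gamma-\nicefrac{1}{m}\le\gamma'\le\gamma$ and $1\le p<\frac{m(\gamma'+n)+2}{1-m(\gamma-\gamma')}$ are exactly what part (1) of that theorem requires when $r=1$, so $v\in L^{q}(D,\delta_{D}^{\gamma'})$ for the prescribed $q$; the same hypothesis on $p$ ensures $q<+\infty$, while $q>1$ and $-1<\gamma'\le r$ are assumed, so \thmref{estimates-bergman}(1), applied with exponent $q$, gives $P_{\omega}v\in L^{q}(D,\delta_{D}^{\gamma'})$, hence $u\in L^{q}(D,\delta_{D}^{\gamma'})$. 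In case (b), $p>m(\gamma+n)+2$ places us in part (3) of \thmref{d-bar-q-gamma'-p-gamma-lip} with $r=1$, so $v\in\Lambda_{\alpha}(D)$ with $\alpha=\frac{1}{m}\bigl[1-\frac{m(\gamma+n)+2}{p}\bigr]$, and since then $0<\alpha\le\nicefrac{1}{m}$, \thmref{estimates-bergman}(2) yields $P_{\omega}v\in\Lambda_{\alpha}(D)$, whence $u\in\Lambda_{\alpha}(D)$.

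For part (2), again with $r=1$: since $\gamma>-1$ I would apply \thmref{d-bar-for-Nev} with exponent $\alpha=\gamma+1>0$, obtaining a solution $v$ of $\overline{\partial}v=f$, continuous on $\overline{D}$, with
\[
\int_{D}|v|^{p}\delta_{D}^{\gamma}\,d\lambda\le\frac{C}{\min\{\gamma+1,1\}}\int_{D}\left\Vert f\right\Vert _{\mathbbmss k,p}\,\delta_{D}^{\gamma+1}\,d\lambda<+\infty .
\]
Hence $v\in L^{p}(D,\delta_{D}^{\gamma})$, and also $v\in L^{2}(D,\omega)$ since $\omega$ is bounded, so $P_{\omega}v$ is literally defined; then, for $p>1$ and $-1<\gamma\le r$, \thmref{estimates-bergman}(1) gives $P_{\omega}v\in L^{p}(D,\delta_{D}^{\gamma})$, so $u=v-P_{\omega}v\in L^{p}(D,\delta_{D}^{\gamma})$ with norm bounded by the right-hand side above.

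The only point that is not pure bookkeeping is the identification $u=(I-P_{\omega})v$ in part (1), where $v=Tf$ need not a priori lie in $L^{2}(D,\omega)$ when $f$ is merely in $L^{p}(D,\delta_{D}^{\gamma})$. I would handle this by first extending $P_{\omega}$ to a bounded operator on $L^{q}(D,\delta_{D}^{\gamma'})$ via \thmref{estimates-bergman}, checking on the dense subclass of forms (e.g.\ smooth ones) for which $Tf\in L^{2}(D,\omega)$ that $(I-P_{\omega})Tf$ is indeed the $L^{2}(D,\omega)$-minimal solution, and then passing to the limit, using that both $f\mapsto Tf$ and $I-P_{\omega}$ are bounded between the relevant weighted spaces. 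What remains --- verifying that the parameter ranges of \thmref{d-bar-q-gamma'-p-gamma-lip}, \thmref{d-bar-for-Nev} and \thmref{estimates-bergman} are simultaneously satisfied under the corollary's hypotheses, in particular that the built-in restrictions $\gamma'\le r$ and $\gamma\le r$ are precisely the ranges in which $P_{\omega}$ is bounded --- is routine.
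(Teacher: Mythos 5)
Your argument is correct and is exactly the intended one: the paper states this corollary without proof as an immediate consequence of combining \thmref{d-bar-q-gamma'-p-gamma-lip}, \thmref{d-bar-for-Nev} and \thmref{estimates-bergman}, via the identity $u=(I-P_{\omega})v$ for a particular solution $v$ produced by the relevant $\overline{\partial}$-operator. Your attention to the ranges of the parameters (in particular $q>1$, $-1<\gamma'\leq r$, $0<\alpha\leq\nicefrac{1}{m}$) and to the small density/extension point needed to make sense of $P_{\omega}v$ when $v\notin L^{2}(D,\omega)$ matches what the paper leaves implicit.
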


\section{\label{sec:Proofs-of-theorems}Proofs of theorems \ref{thm:d-bar-q-gamma'-p-gamma-lip}
to \ref{thm:Carleson-estimates-for-d-bar}}

First, by standard regularization procedure, it suffices to prove
theorems \ref{thm:d-bar-q-gamma'-p-gamma-lip}, and \ref{thm:d-bar-for-Nev}
for forms smooth in $\overline{\Omega}$.

To solve the $\overline{\partial}$-equation on a lineally convex
domain of finite type, we use exactly the method introduced in \cite{CDMb},
except for the proof of \thmref{Carleson-estimates-for-d-bar} where
a modification of the form $s(z,\zeta)$ is done. We now briefly recall
the notations and main results from that work.

If $f$ is a smooth $\left(0,r\right)$-form $\overline{\partial}$-closed,
the then
\begin{equation}
f(z)=\left(-1\right)^{r+1}\overline{\partial_{z}}\left(\int_{\Omega}f(\zeta)\wedge K_{N}^{1}(z,\zeta)\right)-\int_{\Omega}f(\zeta)\wedge P_{N}(z,\zeta),\label{eq:And-Ber-formula}
\end{equation}
where $K_{N}^{1}$ (resp. $P_{N}$) is the component of a kernel $K_{N}$
(formula (2.7) of \cite{CDMb}) of bi-degree $\left(0,r\right)$ in
$z$ and $\left(n,n-r-1\right)$ in $\zeta$ (resp. $\left(0,r\right)$
in $z$ and $\left(n,n-r\right)$ in $\zeta$) constructed with the
method of \cite{BA82} using the Diederich-Fornaess support function
constructed in \cite{Diederich-Fornaess-Support-Func-lineally-cvx}
(see also Theorem 2.2 of \cite{CDMb}) and the function $G(\xi)=\frac{1}{\xi^{N}}$
with a sufficiently large number $N$ (instead of $G(\xi)=\frac{1}{\xi}$
in formula (2.7) of \cite{CDMb}).

Then, the form $\int_{\Omega}f(\zeta)\wedge P_{N}(z,\zeta)$ is $\overline{\partial}$-closed
and the operator $T$ solving the $\overline{\partial}$-equation
in theorems \ref{thm:d-bar-q-gamma'-p-gamma-lip} and \ref{thm:d-bar-for-Nev}
is defined on smooth forms by
\[
Tf(z)=\left(-1\right)^{r+1}\int_{\Omega}f(\zeta)\wedge K_{N}^{1}(z,\zeta)-\overline{\partial}^{*}\mathcal{N}\left(\int_{\Omega}f(\zeta)\wedge P_{N}(z,\zeta)\right),
\]
where $\overline{\partial}^{*}\mathcal{N}$ is the canonical solution
of the $\overline{\partial}$-equation derived from the theory of
the $\overline{\partial}$-Neumann problem on pseudoconvex domains
of finite type.

This formula is justified by the fact that, when the coefficients
of $f$ are in $L^{1}\left(\Omega,\delta_{\Omega}^{\gamma}\right)$
($\gamma>-1$) then, given a large integer $s$, if $N$ is chosen
sufficiently large, the coefficients of the form $\int_{\Omega}f(\zeta)\wedge P_{N}(z,\zeta)$
are in the Sobolev space $L_{s}^{2}(\Omega)$. More precisely, it
is clear that lemmas 2.2 and 2.3 of \cite{CDMb} remains true with
weighted estimates depending on the choice of $N$:
\begin{sllem}
For $r\geq1$ and $\gamma\leq N$, all the $z$-derivatives of $P_{N}(z,\zeta)\left(-\rho(\zeta)\right)^{-\gamma}$
are uniformly bounded in $\overline{\Omega}\times\overline{\Omega}$,
and, for each positive integer $s$, there exists a constant $C_{s,N,\gamma}$
such that, if $f$ is $\left(0,r\right)$-form with coefficients in
$L^{1}(\Omega,\delta_{\Omega}^{\gamma})$,
\[
\left\Vert \int_{\Omega}f(\zeta)\wedge P_{N}(z,\zeta)\right\Vert _{L_{s}^{2}(\Omega)}\leq C_{s,N,\gamma}\left\Vert f\right\Vert _{L^{1}(\Omega,\delta_{\Omega}^{\gamma})}.
\]

\end{sllem}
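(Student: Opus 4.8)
The plan is to derive both assertions from the structure of the kernel $P_{N}$ recalled above, keeping track of the exponent $N$: this is exactly the content of Lemmas 2.2 and 2.3 of \cite{CDMb} (which correspond to the generating function $G(\xi)=\xi^{-1}$, i.e. to $N=1$ in the present normalization), and the only new point is that using $G(\xi)=\xi^{-N}$ improves the boundary vanishing of $P_{N}$ from order $1$ to order $N$. Concretely, the first step is to record, from the explicit form of $K_{N}$ in formula (2.7) of \cite{CDMb}, that the component $P_{N}(z,\zeta)$ carries no factor $\langle s(z,\zeta),\zeta-z\rangle^{-1}$, hence no singularity on the diagonal, and that the only $\zeta$-dependence affecting its behaviour near $\partial\Omega$ enters through a power of the Berndtsson--Andersson weight factor, which vanishes on $\{\rho(\zeta)=0\}$ and, because $G$ has a pole of order $N$, contributes — after all the differentiations and wedge products occurring in $P_{N}$ — a factor that is $O\big((-\rho(\zeta))^{N}\big)$ together with all its $z$-derivatives. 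Since $\rho(\zeta)$ does \emph{not} depend on $z$, this gain is inherited by the $z$-derivatives of $P_{N}$ itself: for every multi-index $\alpha$ there is $C_{\alpha,N}$ with $\big|\partial_{z}^{\alpha}P_{N}(z,\zeta)\big|\le C_{\alpha,N}\,(-\rho(\zeta))^{N}$ on $\overline{\Omega}\times\overline{\Omega}$. This is the step where some care is needed — one must check that differentiating in $z$ the local Diederich--Fornaess support functions, the cut-offs globalizing them, and the derivatives $G^{(k)}$ of the generating argument never consumes the $(-\rho(\zeta))^{N}$ gain — but it is immediate precisely because $\rho(\zeta)$ is $z$-independent, so the verification is verbatim that of \cite{CDMb} with the exponent $1$ replaced by $N$.

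Granting this pointwise bound, the rest is routine. For the first assertion, since $(-\rho(\zeta))^{-\gamma}$ depends only on $\zeta$ we have $\partial_{z}^{\alpha}\big(P_{N}(z,\zeta)(-\rho(\zeta))^{-\gamma}\big)=(-\rho(\zeta))^{-\gamma}\partial_{z}^{\alpha}P_{N}(z,\zeta)$, whose modulus is $\le C_{\alpha,N}(-\rho(\zeta))^{N-\gamma}\le C_{\alpha,N,\gamma}$ because $N-\gamma\ge0$ and $-\rho$ is bounded on $\overline{\Omega}$. For the Sobolev estimate, the same inequality gives $\big|\partial_{z}^{\alpha}P_{N}(z,\zeta)\big|\le C_{\alpha,N,\gamma}\,\delta_{\Omega}(\zeta)^{\gamma}$ (using $\delta_{\Omega}\simeq-\rho$), so one may differentiate under the integral sign — the majorant $C_{\alpha,N,\gamma}|f(\zeta)|\,\delta_{\Omega}(\zeta)^{\gamma}$ is integrable by the hypothesis on $f$ — and obtain, uniformly in $z\in\Omega$,
\[
\Big|\partial_{z}^{\alpha}\!\int_{\Omega}f(\zeta)\wedge P_{N}(z,\zeta)\Big|\le\int_{\Omega}|f(\zeta)|\,\big|\partial_{z}^{\alpha}P_{N}(z,\zeta)\big|\,d\lambda(\zeta)\le C_{\alpha,N,\gamma}\|f\|_{L^{1}(\Omega,\delta_{\Omega}^{\gamma})}.
\]
Summing these uniform pointwise bounds over $|\alpha|\le s$ and using that $\Omega$ has finite Lebesgue measure yields $\big\|\int_{\Omega}f(\zeta)\wedge P_{N}(z,\zeta)\big\|_{L_{s}^{2}(\Omega)}\le C_{s,N,\gamma}\|f\|_{L^{1}(\Omega,\delta_{\Omega}^{\gamma})}$, as claimed.

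Thus the only genuine obstacle is the pointwise estimate $\big|\partial_{z}^{\alpha}P_{N}\big|\lesssim(-\rho(\zeta))^{N}$, i.e. re-reading the kernel construction of \cite{CDMb} with $G(\xi)=\xi^{-N}$ in place of $G(\xi)=\xi^{-1}$; everything after that is the Leibniz rule, the triangle inequality, differentiation under the integral, and the finiteness of $|\Omega|$. I do not expect any difficulty beyond this bookkeeping, which is presumably also why the authors only assert that Lemmas 2.2--2.3 of \cite{CDMb} ``remain true with weighted estimates depending on the choice of $N$''.
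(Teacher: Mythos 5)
Your reduction of the lemma to a pointwise bound on $\partial_{z}^{\alpha}P_{N}$, followed by differentiation under the integral sign and the embedding of uniform bounds into $L_{s}^{2}(\Omega)$ on the bounded set $\Omega$, is indeed the intended skeleton (the paper itself only invokes Lemmas 2.2 and 2.3 of \cite{CDMb} with the exponent $N$). But the one step you call ``the only genuine obstacle'' is exactly where your argument fails. The claimed estimate $\left|\partial_{z}^{\alpha}P_{N}(z,\zeta)\right|\le C_{\alpha,N}\left(-\rho(\zeta)\right)^{N}$, justified by saying that the weight factor is $O\left((-\rho(\zeta))^{N}\right)$ and that the gain passes to all $z$-derivatives ``because $\rho(\zeta)$ does not depend on $z$'', is not correct. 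The Berndtsson--Andersson weight is $\left(\rho(\zeta)\big/\bigl(\rho(\zeta)+\tfrac{1}{K_{0}}S(z,\zeta)\bigr)\right)^{N+k}$, and near the boundary diagonal it does \emph{not} vanish: at $z=\zeta$ one has $S(\zeta,\zeta)=0$, so the factor equals $1$ however small $-\rho(\zeta)$ is. Since $Q=\frac{1}{K_{0}\rho(\zeta)}\sum Q_{i}\,d(\zeta_{i}-z_{i})$ contributes negative powers of $\rho(\zeta)$ through $\partial_{\bar\zeta}Q$ and $\partial_{\bar z}Q$, a bound of the form $(-\rho(\zeta))^{N}$ uniformly on $\overline{\Omega}\times\overline{\Omega}$ cannot be extracted from the weight alone; moreover each $\partial_{z}$ raises the power of $\rho(\zeta)+\tfrac{1}{K_{0}}S(z,\zeta)$, which in general is only bounded below by $c\left|\rho(\zeta)\right|$ (inequality (\ref{eq:real-part-Q-zeta-z-plus-1})), so the $z$-independence of $\rho(\zeta)$ does not by itself protect the gain under differentiation.

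What actually saves the lemma --- and explains the hypothesis $r\ge1$, which your argument never uses --- is the bidegree structure together with the holomorphy in $z$ of the Diederich--Fornaess support function. Every term of the component $P_{N}$ of bidegree $(0,r)$ in $z$ with $r\ge1$ contains at least one factor $\partial_{\bar z}Q$, and $Q$ is holomorphic in $z$ precisely in the region where the cut-off $\chi$ in (\ref{eq:def-S-and-Qi}) is identically $1$, i.e.\ where both $\left|z-\zeta\right|$ and $\delta_{\Omega}(\zeta)$ are small. Hence on the support of $P_{N}$ either $\delta_{\Omega}(\zeta)$ is bounded below, or $\left|z-\zeta\right|$ is bounded below and then $\left|\rho(\zeta)+\tfrac{1}{K_{0}}S(z,\zeta)\right|\gtrsim1$; only in this second regime does the weight really produce the decay in $-\rho(\zeta)$ (of order $N$ up to a fixed loss coming from the $\rho(\zeta)^{-1}$ inside $Q$), and there $z$-differentiations, which hit $S(z,\zeta)$ in a denominator now bounded below by a constant, are harmless. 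Without this observation the claimed pointwise bound is false near the boundary diagonal (and indeed the statement itself fails for $r=0$, where $P_{N}$ is a weighted Bergman-type kernel, unbounded on the diagonal). Once the estimate is corrected in this way --- with constants depending on $N$, $\gamma$ and the order $\left|\alpha\right|\le s$ of differentiation, consistent with the paper's ``$N$ chosen sufficiently large'' --- the remainder of your argument (integrable majorant, differentiation under the integral, finiteness of $\left|\Omega\right|$) is fine and coincides with the proof in \cite{CDMb}.
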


As $\Omega$ is assumed to be smooth and of finite type, the regularity
results of the $\overline{\partial}$-Neumann problem (\cite{Kohn-Nirenberg-1965}
and \cite{Catlin-Est.-Sous-ellipt.}) imply:
\begin{sllem}
For $r\geq1$ and $-1<\gamma\leq N$, for each positive integer $s$,
if $f$ is a $\overline{\partial}$-closed $\left(0,r\right)$-form with coefficients in $L^{1}(\Omega,\delta_{\Omega}^{\gamma})$
and $g=\int_{\Omega}f(\zeta)\wedge P_{N}(z,\zeta)$, then $\overline{\partial}^{*}\mathcal{N}(g)$
is a solution of the equation $\overline{\partial}u=g$ satisfying
$\left\Vert \overline{\partial}^{*}\mathcal{N}(g)\right\Vert _{L_{s}^{2}(\Omega)}\leq C_{s,N,\gamma}\left\Vert f\right\Vert _{L^{1}(\Omega,\delta^{\gamma})}$.
\end{sllem}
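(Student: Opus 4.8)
The plan is to deduce the statement directly from the preceding lemma together with the Sobolev regularity of the $\overline{\partial}$-Neumann problem on smoothly bounded pseudoconvex domains of finite type, which is exactly what the sentence just above it announces. The first step is to record what the preceding lemma really gives for $g(z)=\int_{\Omega}f(\zeta)\wedge P_{N}(z,\zeta)$: since $\gamma\leq N$, each $z$-derivative of $P_{N}(z,\zeta)\left(-\rho(\zeta)\right)^{-\gamma}$ is bounded on $\overline{\Omega}\times\overline{\Omega}$, hence for every multi-index $\alpha$ one has $\left|\partial_{z}^{\alpha}g(z)\right|=\left|\int_{\Omega}f(\zeta)\wedge\partial_{z}^{\alpha}P_{N}(z,\zeta)\right|\lesssim\int_{\Omega}\left|f(\zeta)\right|\delta_{\Omega}(\zeta)^{\gamma}\,d\lambda(\zeta)=\left\Vert f\right\Vert _{L^{1}(\Omega,\delta^{\gamma})}$ uniformly in $z$; in particular $g$ extends smoothly to $\overline{\Omega}$ and $\left\Vert g\right\Vert _{L_{s}^{2}(\Omega)}\leq C_{s,N,\gamma}\left\Vert f\right\Vert _{L^{1}(\Omega,\delta^{\gamma})}$ for every $s$. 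It then suffices to show that $\overline{\partial}^{*}\mathcal{N}(g)$ solves $\overline{\partial}u=g$ and is controlled in $L_{s}^{2}(\Omega)$ by $\left\Vert g\right\Vert _{L_{s}^{2}(\Omega)}$.

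For the second step, $\Omega$ being lineally convex is in particular a smoothly bounded pseudoconvex domain of finite type $m$, so by the work of Kohn--Nirenberg \cite{Kohn-Nirenberg-1965} and Catlin \cite{Catlin-Est.-Sous-ellipt.} the $\overline{\partial}$-Neumann problem on $\left(0,r\right)$-forms, $1\leq r\leq n$, satisfies a subelliptic estimate and is globally regular in the Sobolev scale: the operators $\mathcal{N}$, $\overline{\partial}\mathcal{N}$ and $\overline{\partial}^{*}\mathcal{N}$ each map $L_{s}^{2}(\Omega)$ continuously into itself for every integer $s\geq0$. Since $g$ is smooth up to the boundary and $\overline{\partial}$-closed, Kohn's solution operator yields the solution $u:=\overline{\partial}^{*}\mathcal{N}(g)$ of $\overline{\partial}u=g$ (the standard consequence of the Hodge decomposition $I=\overline{\partial}\overline{\partial}^{*}\mathcal{N}+\overline{\partial}^{*}\overline{\partial}\mathcal{N}$ on $\left(0,r\right)$-forms with $r\geq1$, the harmonic projection being trivial on a bounded pseudoconvex domain). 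Combining the two steps, $\left\Vert u\right\Vert _{L_{s}^{2}(\Omega)}\leq C_{s}\left\Vert g\right\Vert _{L_{s}^{2}(\Omega)}\leq C_{s,N,\gamma}\left\Vert f\right\Vert _{L^{1}(\Omega,\delta^{\gamma})}$, which is the asserted estimate; for a fixed $s$ no hypothesis on $N$ beyond $\gamma\leq N$ is needed.

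The one genuinely delicate ingredient, and the step I expect to be the real obstacle if one insists on a self-contained proof, is the \emph{exact} (global) $L_{s}^{2}$-regularity of $\overline{\partial}^{*}\mathcal{N}$, namely that it preserves each $L_{s}^{2}(\Omega)$. A subelliptic estimate of order $\varepsilon=\nicefrac{1}{m}$ by itself only produces a gain $\mathcal{N}\colon L_{s}^{2}\to L_{s+2\varepsilon}^{2}$, and since $2\varepsilon\leq1$ this would be lost after composing with $\overline{\partial}^{*}$; what is actually required is the stronger statement that $\overline{\partial}^{*}\mathcal{N}$ returns $L_{s}^{2}$ to itself. This follows from the subelliptic estimate by the usual elliptic-regularization and \emph{a priori} estimate arguments (Kohn--Nirenberg) together with Catlin's estimates, and it is precisely the ``regularity results of the $\overline{\partial}$-Neumann problem'' invoked in the statement. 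Once this is granted the lemma is exactly the two short steps above.
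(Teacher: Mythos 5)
Your argument is correct and is exactly the paper's route: the preceding lemma gives $\left\Vert g\right\Vert _{L_{s}^{2}(\Omega)}\lesssim\left\Vert f\right\Vert _{L^{1}(\Omega,\delta^{\gamma})}$, and the lemma then follows from the exact global Sobolev regularity of $\overline{\partial}^{*}\mathcal{N}$ on smooth pseudoconvex domains of finite type (Kohn--Nirenberg, Catlin), which is all the paper itself invokes. Your remark that one needs the genuine preservation of $L_{s}^{2}$ by $\overline{\partial}^{*}\mathcal{N}$, rather than a naive composition of the subelliptic gain of $\mathcal{N}$ with the loss from $\overline{\partial}^{*}$, correctly identifies the only delicate point and resolves it as the paper does, by citation.
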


Applying Sobolev lemma we immediately get:
\begin{sllem}
\label{lem:3-3-C1-regularity-PN}For $r\geq1$ and $-1<\gamma\leq N$,
if $f$ is a $\overline{\partial}$-closed $\left(0,r\right)$-form with coefficients in $L^{1}(\Omega,\delta_{\Omega}^{\gamma})$
and $g=\int_{\Omega}f(\zeta)\wedge P_{N}(z,\zeta)$, then $\overline{\partial}^{*}\mathcal{N}(g)$
is a solution of the equation $\overline{\partial}u=g$ satisfying
$\left\Vert \overline{\partial}^{*}\mathcal{N}(g)\right\Vert _{\mathcal{C}^{1}(\overline{\Omega})}\leq C\left\Vert f\right\Vert _{L^{1}\left(\Omega,\delta^{\gamma}\right)}$.
\end{sllem}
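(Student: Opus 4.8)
The plan is to obtain this as an immediate consequence of the preceding lemma together with the Sobolev embedding theorem; there is essentially no obstacle, since the analytic work --- the subelliptic estimates for the $\overline{\partial}$-Neumann problem on smooth finite type domains (Kohn--Nirenberg, Catlin) --- has already been carried out in that lemma, and what remains is pure bookkeeping.

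First I would fix the large integer $N$ and take $\gamma$, $-1<\gamma\le N$, and $f$ as in the statement, and then choose once and for all an integer $s$ with $s>n+1$ (for instance $s=n+2$). By the preceding lemma, $g=\int_{\Omega}f(\zeta)\wedge P_{N}(z,\zeta)$ is $\overline{\partial}$-closed, $\overline{\partial}^{*}\mathcal{N}(g)$ is a solution of $\overline{\partial}u=g$, and
\[
\left\Vert\overline{\partial}^{*}\mathcal{N}(g)\right\Vert_{L_{s}^{2}(\Omega)}\le C_{s,N,\gamma}\left\Vert f\right\Vert_{L^{1}(\Omega,\delta^{\gamma})}.
\]
Next I would invoke the Sobolev embedding: since $\Omega$ is a bounded domain with smooth boundary in $\mathbb{C}^{n}\cong\mathbb{R}^{2n}$, it is a Sobolev extension domain, so $L_{s}^{2}(\Omega)\hookrightarrow\mathcal{C}^{1}(\overline{\Omega})$ continuously whenever $s>n+1$, with an embedding constant $C_{\Omega,s}$ depending only on $\Omega$ and $s$. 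Composing the two inequalities yields
\[
\left\Vert\overline{\partial}^{*}\mathcal{N}(g)\right\Vert_{\mathcal{C}^{1}(\overline{\Omega})}\le C_{\Omega,s}\left\Vert\overline{\partial}^{*}\mathcal{N}(g)\right\Vert_{L_{s}^{2}(\Omega)}\le C\left\Vert f\right\Vert_{L^{1}(\Omega,\delta^{\gamma})},
\]
with $C=C_{\Omega,s}C_{s,N,\gamma}$, which, once $s$ is frozen, depends only on $N$ and $\gamma$. Since $\overline{\partial}^{*}\mathcal{N}(g)$ already solves $\overline{\partial}u=g$, this is exactly the assertion.

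The only things to be careful about are the bookkeeping points: that $C_{s,N,\gamma}$ from the preceding lemma is finite for the fixed value of $s$ we have chosen, which holds because that lemma is valid for every positive integer $s$, and that the Sobolev embedding is being applied on the domain $\Omega$ rather than on $\mathbb{R}^{2n}$, which is legitimate because $\partial\Omega$ is smooth. If one prefers, the whole argument can be summarised at the level of operators: by the $\overline{\partial}$-Neumann regularity theory used above, $\overline{\partial}^{*}\mathcal{N}$ carries $\overline{\partial}$-closed forms with coefficients in $L_{s}^{2}(\Omega)$ into forms with coefficients in $L_{s}^{2}(\Omega)$, and the inclusion $L_{s}^{2}(\Omega)\subset\mathcal{C}^{1}(\overline{\Omega})$ is then just Sobolev embedding.
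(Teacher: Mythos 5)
Your argument is exactly the paper's: the authors obtain this lemma by applying the Sobolev embedding to the $L_{s}^{2}(\Omega)$ estimate of the preceding lemma, precisely as you do (with the correct threshold $s>n+1$ on a smooth bounded domain in $\mathbb{R}^{2n}$). The proposal is correct and requires no changes.
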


\medskip{}

Finally the proofs of our theorems are reduced to the proofs of good
estimates for the operator $T_{K}$ defined by
\begin{equation}
T_{K}:\,f\mapsto\int_{\Omega}f(\zeta)\wedge K_{N}^{1}(z,\zeta).\label{eq:operator_K1}
\end{equation}

To do it we need to recall the anisotropic geometry of $\Omega$ and
the basic estimates given in \cite{CDMb}.

For $\zeta$ close to $\partial\Omega$ and $\varepsilon\leq\varepsilon_{0}$,
$\varepsilon_{0}$ small, define, for all unitary vector $v$,
\begin{equation}
\tau\left(\zeta,v,\varepsilon\right)=\sup\left\{ c\mbox{ such that }\rho\left(\zeta+\lambda v\right)-\rho(\zeta)<\varepsilon,\,\forall\lambda\in\mathbb{C},\,\left|\lambda\right|<c\right\} .\label{eq:definition-tau-general}
\end{equation}
Note that the lineal convexity hypothesis implies that the function
$\left(\zeta,\varepsilon\right)\mapsto\tau(\zeta,v,\varepsilon)$
is smooth. In particular, $\zeta\mapsto\tau(\zeta,v,\delta_{\Omega}(\zeta))$
is a smooth function. The pseudo-balls $B_{\varepsilon}(\zeta)$ (for
$\zeta$ close to the boundary of $\Omega$) of the homogeneous space
associated to the anisotropic geometry of $\Omega$ are
\begin{equation}
B_{\varepsilon}(\zeta)=\left\{ \xi=\zeta+\lambda u\mbox{ with }\left|u\right|=1\mbox{ and }\left|\lambda\right|<c_{0}\tau(\zeta,u,\varepsilon)\right\} \label{eq:def-pseudo-balls}
\end{equation}
where $c_{0}$ is chosen sufficiently small depending only on the
defining function $\rho$ of $\Omega$.

Let $\zeta$ and $\varepsilon$ be fixed. Then, an orthonormal basis
$\left(v_{1},v_{2},\ldots,v_{n}\right)$ is called \emph{$\left(\zeta,\varepsilon\right)$-extremal}
(or $\varepsilon$-\emph{extremal}, or simply \emph{extremal}) if
$v_{1}$ is the complex normal (to $\rho$) at $\zeta$, and, for
$i>1$, $v_{i}$ belongs to the orthogonal space of the vector space
generated by $\left(v_{1},\ldots,v_{i-1}\right)$ and minimizes $\tau\left(\zeta,v,\varepsilon\right)$
in the unit sphere of that space. In association to an extremal basis, we denote
\begin{equation}
\tau(\zeta,v_{i},\varepsilon)=\tau_{i}(\zeta,\varepsilon).\label{eq:definition-tau-extremal-basis}
\end{equation}

Then we defined polydiscs $AP_{\varepsilon}(\zeta)$ by
\begin{equation}
AP_{\varepsilon}(\zeta)=\left\{ z=\zeta+\sum_{k=1}^{n}\lambda_{k}v_{k}\mbox{ such that }\left|\lambda_{k}\right|\leq c_{0}A\tau_{k}(\zeta,\varepsilon)\right\} .\label{eq:def-polydisk}
\end{equation}

$P_{\varepsilon}(\zeta)$ being the corresponding polydisc with $A=1$
and we also define
\[
d(\zeta,z)=\inf\left\{ \varepsilon\mbox{ such that }z\in B_{\varepsilon}(\zeta)\right\} ,
\]
so
\[
d(\zeta,z)\simeq\inf\left\{ \varepsilon\mbox{ such that }z\in P_{\varepsilon}(\zeta)\right\} .
\]

\begin{rem*}
Note that there is neither unicity of the extremal basis $\left(v_{1},v_{2},\ldots,v_{n}\right)$
nor of associated polydisk $P_{\varepsilon}(\zeta)$. However the
polydisks associated to two different $\left(\zeta,\varepsilon\right)$-extremal
basis are equivalent. Thus in all the paper $P_{\varepsilon}(\zeta)$
will denote a polydisk associated to any $\left(\zeta,\varepsilon\right)$-extremal
basis and $\tau_{i}(\zeta,\varepsilon)$ the radius of $P_{\varepsilon}(\zeta)$.
\end{rem*}

The fundamental result here is that $d$ is a pseudo-distance which
means that there exists a constant $K$ and, $\forall\alpha>0$, constants
$c(\alpha)$ and $C(\alpha)$ such that
\begin{equation}
\mbox{for }\zeta\in P_{\varepsilon}(z),\,P_{\varepsilon}(z)\subset P_{K\varepsilon}(\zeta),\label{eq:inclusion-polydisques-pseudo}
\end{equation}
and
\begin{equation}
c(\alpha)P_{\varepsilon}(\zeta)\subset P_{\alpha\varepsilon}(\zeta)\subset C(\alpha)P_{\varepsilon}(\zeta)\mbox{ and }P_{c(\alpha)\varepsilon}(\zeta)\subset\alpha P_{\varepsilon}(\zeta)\subset P_{C(\alpha)\varepsilon}(\zeta).\label{eq:polydiscs-pseudodistance}
\end{equation}

Moreover the pseudo-balls $B_{\varepsilon}$ and the polydiscs $P_{\varepsilon}$
are equivalent in the sense that there exists a constant $K>0$ depending
only on $\Omega$ such that
\begin{equation}
\frac{1}{K}P_{\varepsilon}(\zeta)\subset B_{\varepsilon}(\zeta)\subset KP_{\varepsilon}(\zeta).\label{eq:equivalence-pseudoballs-polydisk}
\end{equation}

For $\zeta$ close to $\partial\Omega$ and $\varepsilon>0$ small,
the basic properties of this geometry are (see \cite{Conrad_lineally_convex}
and \cite{CDMb}):
\begin{enumerate}
\item \label{geometry-1}Let $w=\left(w_{1},\ldots,w_{n}\right)$ be an
orthonormal system of coordinates centered at $\zeta$. Then
\[
\left|\frac{\partial^{\left|\alpha+\beta\right|}\rho(\zeta)}{\partial w^{\alpha}\partial\bar{w}^{\beta}}\right|\lesssim\frac{\varepsilon}{\prod_{i}\tau\left(\zeta,w_{i},\varepsilon\right)^{\alpha_{i}+\beta_{i}}},\,\left|\alpha+\beta\right|\geq1.
\]

\item \label{geometry-2}Let $\nu$ be a unit vector. Let $a_{\alpha\beta}^{\nu}(\zeta)=\frac{\partial^{\alpha+\beta}\rho}{\partial\lambda^{\alpha}\partial\bar{\lambda}^{\beta}}\left(\zeta+\lambda\nu\right)_{|\lambda=0}$.
Then
\[
\sum_{1\leq\left|\alpha+\beta\right|\leq2m}\left|a_{\alpha\beta}^{\nu}(\zeta)\right|\tau(\zeta,\nu,\varepsilon)^{\alpha+\beta}\simeq\varepsilon,
\]
where $m$ is the type of $\Omega$.
\item \label{geometry-3}If $\left(v_{1},\ldots,v_{n}\right)$ is a $\left(\zeta,\varepsilon\right)$-extremal
basis and $\gamma=\sum_{1}^{n}a_{j}v_{j}\neq0$, then
\[
\frac{1}{\tau(\zeta,\gamma,\varepsilon)}\simeq\sum_{j=1}^{n}\frac{\left|a_{j}\right|}{\tau_{j}(\zeta,\varepsilon)}.
\]

\item \label{geometry-4}If $v$ is a unit vector then:

\begin{enumerate}
\item $z=\zeta+\lambda v\in P_{\varepsilon}(\zeta)$ implies $\left|\lambda\right|\lesssim\tau(\zeta,v,\varepsilon)$,
\item $z=\zeta+\lambda v$ with $\left|\lambda\right|\leq\tau(\zeta,v,\varepsilon)$
implies $z\in CP_{\varepsilon}(\zeta)$.
\end{enumerate}
\item \label{geometry-5}If $\nu$ is the unit complex normal, then $\tau(\zeta,v,\varepsilon)=\varepsilon$
and if $v$ is any unit vector and $\lambda\geq1$,
\begin{equation}
\lambda^{\nicefrac{1}{m}}\tau(\zeta,v,\varepsilon)\lesssim\tau(\zeta,v,\lambda\varepsilon)\lesssim\lambda\tau(\zeta,v,\varepsilon),\label{eq:comp-tau-epsilon-tau-lambda-epsilon}
\end{equation}
where $m$ is the type of $\Omega$.\end{enumerate}
\begin{sllem}
\label{lem:3.4-maj-deriv-rho-equiv-tho-i-z-zeta}For $z$ close to
$\partial\Omega$, $\varepsilon$ small and $\zeta\in P_{\varepsilon}(z)$
or $z\in P_{\varepsilon}(\zeta)$, we have, for all $1\leq i\leq n$:
\begin{enumerate}
\item $\tau_{i}(z,\varepsilon)=\tau\left(z,v_{i}\left(z,\varepsilon\right),\varepsilon\right)\simeq\tau\left(\zeta,v_{i}\left(z,\varepsilon\right),\varepsilon\right)$
where $\left(v_{i}\left(z,\varepsilon\right)\right)_{i}$ is the $\left(z,\varepsilon\right)$-extremal
basis;
\item $\tau_{i}(\zeta,\varepsilon)\simeq\tau_{i}(z,\varepsilon)$;
\item In the coordinate system $\left(z_{i}\right)$ associated to the $\left(z,\varepsilon\right)$-extremal
basis, $\left|\frac{\partial\rho}{\partial z_{i}}(\zeta)\right|\lesssim\frac{\varepsilon}{\tau_{i}}$
where $\tau_{i}$ is either $\tau_{i}\left(z,\varepsilon\right)$
or $\tau_{i}\left(\zeta,\varepsilon\right)$.
\end{enumerate}
\end{sllem}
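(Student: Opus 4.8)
The plan is to reduce all three assertions to a single pointwise estimate: that the Taylor coefficients of $\rho$ at $\zeta$ are controlled by the anisotropic geometry of $\Omega$ at $z$. First observe that the two hypotheses $\zeta\in P_{\varepsilon}(z)$ and $z\in P_{\varepsilon}(\zeta)$ are exchanged, at the cost of replacing $\varepsilon$ by $K\varepsilon$, by means of~(\ref{eq:inclusion-polydisques-pseudo}), while (\ref{eq:polydiscs-pseudodistance}) and~(\ref{eq:comp-tau-epsilon-tau-lambda-epsilon}) let one pass back from $K\varepsilon$ to $\varepsilon$; so it is enough to work with $\zeta\in P_{\varepsilon}(z)$ and, in (1) and (2), to prove only the comparison that is convenient, the reverse one following by exchanging the roles of $z$ and $\zeta$. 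From now on we use the coordinates $(z_{i})$ attached to a $(z,\varepsilon)$-extremal basis $(v_{i}(z,\varepsilon))$ and write $\zeta=z+h$ with $h=\sum_{i}h_{i}v_{i}(z,\varepsilon)$, $|h_{i}|\leq c_{0}\tau_{i}(z,\varepsilon)$.

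The first step is to Taylor-expand, around $z$, each mixed derivative $\partial^{|\mu+\nu|}\rho/\partial z^{\mu}\partial\bar{z}^{\nu}$ at the point $\zeta=z+h$, to bound the derivatives at $z$ by property~(\ref{geometry-1}), to bound $|h^{p}\bar{h}^{q}|$ by $\prod_{i}(c_{0}\tau_{i}(z,\varepsilon))^{p_{i}+q_{i}}$, and to sum the resulting series, whose $(p,q)$-th term is at most a constant times $c_{0}^{|p+q|}$ times the leading one, so that it converges and is comparable to its leading term once $c_{0}$ is small enough in terms of $\Omega$. This yields
\[
\left|\frac{\partial^{|\mu+\nu|}\rho}{\partial z^{\mu}\partial\bar{z}^{\nu}}(\zeta)\right|\lesssim\frac{\varepsilon}{\prod_{i}\tau_{i}(z,\varepsilon)^{\mu_{i}+\nu_{i}}},\qquad|\mu+\nu|\geq1,
\]
which for $\mu=e_{i}$, $\nu=0$ is exactly assertion (3) with $\tau_{i}=\tau_{i}(z,\varepsilon)$; by assertion (2) it is also the statement with $\tau_{i}=\tau_{i}(\zeta,\varepsilon)$.

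For assertion (1), fix a unit vector $v=\sum_{i}c_{i}v_{i}(z,\varepsilon)$ ($\sum|c_{i}|^{2}=1$) and expand $a^{v}_{\alpha\beta}(\zeta)=\partial_{\lambda}^{\alpha}\partial_{\bar{\lambda}}^{\beta}\rho(\zeta+\lambda v)_{|\lambda=0}$ by the chain rule into the derivatives $\partial^{|\mu+\nu|}\rho/\partial z^{\mu}\partial\bar{z}^{\nu}(\zeta)$ with $|\mu|=\alpha$, $|\nu|=\beta$, weighted by $c^{\mu}\bar{c}^{\nu}$. Inserting the bound of the previous step and invoking property~(\ref{geometry-3}), which gives $\sum_{i}|c_{i}|/\tau_{i}(z,\varepsilon)\simeq1/\tau(z,v,\varepsilon)$, one obtains $|a^{v}_{\alpha\beta}(\zeta)|\,\tau(z,v,\varepsilon)^{\alpha+\beta}\lesssim\varepsilon$ for $1\leq|\alpha+\beta|\leq2m$. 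On the other hand property~(\ref{geometry-2}) at $\zeta$ in the direction $v$ reads $\sum_{1\leq|\alpha+\beta|\leq2m}|a^{v}_{\alpha\beta}(\zeta)|\,\tau(\zeta,v,\varepsilon)^{\alpha+\beta}\simeq\varepsilon$; since a polynomial $\sum_{k\geq1}b_{k}t^{k}$ with $b_{k}\geq0$ is at most $(t/s)$ times its value at $s$ when $0\leq t\leq s$, comparing the two forces $\tau(\zeta,v,\varepsilon)\gtrsim\tau(z,v,\varepsilon)$, the reverse being the symmetric statement. Thus $\tau(\zeta,v,\varepsilon)\simeq\tau(z,v,\varepsilon)$ for \emph{every} unit $v$, which contains (1). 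Assertion (2) is then a dimension count: by the very definition of $\tau_{i}(z,\varepsilon)$ as a minimum, $\tau(z,v,\varepsilon)\geq\tau_{i}(z,\varepsilon)$ for every unit $v\in E_{i}:=\mathrm{span}(v_{i}(z,\varepsilon),\dots,v_{n}(z,\varepsilon))$, hence $\tau(\zeta,v,\varepsilon)\gtrsim\tau_{i}(z,\varepsilon)$ on $E_{i}$; since $\dim E_{i}+\dim\mathrm{span}(v_{1}(\zeta,\varepsilon),\dots,v_{i}(\zeta,\varepsilon))=n+1$, these subspaces share a unit vector $v_{0}$, and property~(\ref{geometry-3}) at $\zeta$ together with the monotonicity $\tau_{1}(\zeta,\varepsilon)\leq\dots\leq\tau_{n}(\zeta,\varepsilon)$ of the extremal radii (minimizing over smaller subspaces) gives $\tau(\zeta,v_{0},\varepsilon)\lesssim\tau_{i}(\zeta,\varepsilon)$. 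Combining, $\tau_{i}(z,\varepsilon)\lesssim\tau_{i}(\zeta,\varepsilon)$, and exchanging $z$ and $\zeta$ yields the equivalence.

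I expect the only genuine obstacle to be the first step, i.e.\ checking that the Taylor series of the derivatives of $\rho$ transported from $z$ to $\zeta$ converges with constants depending only on $\Omega$; this is exactly the reason for the smallness of $c_{0}$ in the definition~(\ref{eq:def-pseudo-balls}) of the pseudo-balls, and it is where property~(\ref{geometry-1}) is used in full strength. Everything after that is soft: properties~(\ref{geometry-2}) and~(\ref{geometry-3}) convert these pointwise bounds into comparisons of the $\tau$'s, and (\ref{eq:inclusion-polydisques-pseudo}), (\ref{eq:polydiscs-pseudodistance}) and~(\ref{eq:comp-tau-epsilon-tau-lambda-epsilon}) dispose of the harmless asymmetry between $z$ and $\zeta$.
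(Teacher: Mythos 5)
Your route is genuinely different from the paper's, which disposes of this lemma by quoting \cite{Conrad_lineally_convex} for (1) and then deducing (2) and (3) from the properties of the geometry; you instead give a self-contained proof. Most of it is sound: the reduction of the two hypotheses to the single one $\zeta\in P_{\varepsilon}(z)$ via (\ref{eq:inclusion-polydisques-pseudo}), (\ref{eq:polydiscs-pseudodistance}) and (\ref{eq:comp-tau-epsilon-tau-lambda-epsilon}); the passage from pointwise bounds on the derivatives of $\rho$ at $\zeta$ to $\left|a_{\alpha\beta}^{v}(\zeta)\right|\tau(z,v,\varepsilon)^{\alpha+\beta}\lesssim\varepsilon$ via the chain rule and (\ref{geometry-3}); the comparison with (\ref{geometry-2}) applied at $\zeta$ through the elementary inequality for polynomials with nonnegative coefficients, which gives $\tau(\zeta,v,\varepsilon)\simeq\tau(z,v,\varepsilon)$ for \emph{every} unit $v$ and hence (1); and the dimension count that converts this directional statement into (2), from which the $\tau_{i}(\zeta,\varepsilon)$ variant of (3) follows. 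In effect you reprove the result the paper cites, which is legitimate and makes the argument self-contained.

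The one step that fails as written is the very first one. The function $\rho$ is only $\mathcal{C}^{\infty}$, not real analytic, so the Taylor series of $\partial^{\left|\mu+\nu\right|}\rho/\partial z^{\mu}\partial\bar{z}^{\nu}$ at $z$ need not converge, and certainly need not converge to its value at $\zeta$; moreover the implicit constants in (\ref{geometry-1}) depend on the order $\left|\alpha+\beta\right|$, so the $(p,q)$-th term of your series is only bounded by $C_{|p+q|}c_{0}^{|p+q|}$ times the leading one, and no smallness of $c_{0}$ makes the infinite sum legitimate on these grounds. The repair is routine and should be made explicit: you only ever need derivatives of order $1\leq\left|\mu+\nu\right|\leq2m$ at $\zeta$ (assertion (3) uses order $1$, and assertion (1) only uses $a_{\alpha\beta}^{v}(\zeta)$ with $\left|\alpha+\beta\right|\leq2m$ because (\ref{geometry-2}) stops at order $2m$), so expand each such derivative around $z$ to the \emph{finite} order $2m$ with Lagrange remainder. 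The finitely many Taylor terms are handled exactly as you say by (\ref{geometry-1}) and $\left|h_{i}\right|\leq c_{0}\tau_{i}(z,\varepsilon)$; the remainder is $O\left(\left|h\right|^{2m+1}\right)$ with a constant controlled by finitely many derivatives of $\rho$ on a neighborhood of $\overline{\Omega}$, and since $\left|h\right|\lesssim\tau_{n}(z,\varepsilon)\lesssim\varepsilon^{\nicefrac{1}{m}}$ (by (\ref{eq:comp-tau-epsilon-tau-lambda-epsilon})) this remainder is $O\left(\varepsilon^{2+\nicefrac{1}{m}}\right)\lesssim\varepsilon\lesssim\varepsilon/\prod_{i}\tau_{i}(z,\varepsilon)^{\mu_{i}+\nu_{i}}$, the last inequality because every $\tau_{i}(z,\varepsilon)\lesssim\varepsilon^{\nicefrac{1}{m}}\leq1$. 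Note also that only this upper bound is used in the sequel: the claimed comparability of the sum with its leading term is false in general (the leading derivative may vanish at $z$) and is not needed.
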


\begin{proof}
(1) is proved in \cite{Conrad_lineally_convex} (together with the
properties of the geometry). (2) follows the properties of the geometry
((\ref{eq:inclusion-polydisques-pseudo}) and (\ref{eq:polydiscs-pseudodistance}))
and formula (\ref{eq:integral-mod(z-zeta)-inverse-power-1+mu}) of
\lemref{Lemma-3-9} and (3) is a consequence of (1), (2) and the first
property of the geometry.\end{proof}
\begin{rem*}
In (1) above $\tau\left(\zeta,v_{i}\left(z,\varepsilon\right),\varepsilon\right)$
is not $\tau_{i}\left(\zeta,\varepsilon\right)$ because the extremal
basis at $z$ and $\zeta$ are different but (2) implies that these
quantities are equivalent.
\end{rem*}

\medskip{}

We now recall the detailed expression of $K_{N}^{1}$ (\cite{CDMb}
sections 2.2 and 2.3):
\[
K_{N}^{1}(z,\zeta)=\sum_{k=n-r}^{n-1}C'_{k}\frac{\rho(\zeta)^{k+N}s\wedge\left(\partial_{\bar{\zeta}}Q\right)^{n-r}\wedge\left(\partial_{\bar{z}}Q\right)^{k+r-n}\wedge\left(\partial_{\bar{z}}s\right)^{n-k-1}}{\left\langle s(z,\zeta),\zeta-z\right\rangle ^{2\left(n-k\right)}\left(\frac{1}{K_{0}}S(z,\zeta)+\rho(\zeta)\right)^{k+N}},
\]
where $s$ is a $\left(1,0\right)$-form satisfying
\[
c\left|z-\zeta\right|^{2}\leq\left|\left\langle s(z,\zeta),\zeta-z\right\rangle \right|\leq C\left|\zeta-z\right|
\]
uniformly for $\zeta\in\overline{\Omega}$ and $z$ in any compact
subset of $\Omega$, and
\[
Q(z,\zeta)=\frac{1}{K_{0}\rho(\zeta)}\sum_{i=1}^{n}Q_{i}(z,\zeta)d\left(\zeta_{i}-z_{i}\right)
\]
with
\begin{equation}
S(z,\zeta)=\chi(z,\zeta)S_{0}(z,\zeta)-\left(1-\chi(z,\zeta)\right)\left|z-\zeta\right|^{2}=\sum_{i=1}^{n}Q_{i}(z,\zeta)\left(z_{i}-\zeta_{i}\right),\label{eq:def-S-and-Qi}
\end{equation}
$S_{0}$ being the holomorphic support function of Diederich-Fornaess
(see \cite{Diederich-Fornaess-Support-Func-lineally-cvx} or Theorem
2.2 of \cite{CDMb}) and $\chi$ a truncating function which is equal
to $1$ when both $\left|z-\zeta\right|$ and $\delta_{\Omega}(\zeta)$
are small and $0$ if one of these expressions is large (see the beginning
of Section 2.2 of \cite{CDMb} for a precise definition). Recall that
$K_{0}$ is chosen so that
\[
\Re\mathrm{e}\left(\rho(\zeta)+\frac{1}{K_{0}}S(z,\zeta)\right)<\frac{\rho(\zeta)}{2},
\]
which implies
\begin{equation}
\left|\rho(\zeta)+\frac{1}{K_{0}}S(z,\zeta)\right|\gtrsim\left|\rho(\zeta)\right|.\label{eq:real-part-Q-zeta-z-plus-1}
\end{equation}

The precise choice of the form $s$ is
\begin{equation}
s(z,\zeta)=\sum_{i=1}^{n}\left(\overline{\zeta_{i}}-\overline{z_{i}}\right)d\left(\zeta_{i}-z_{i}\right)\label{eq:basic-choice-for-s(z,zeta)}
\end{equation}
for all the proofs except for the proof of \thmref{Carleson-estimates-for-d-bar}
where a different choice is needed.

The following estimates of the expressions appearing in $K_{N}^{1}$
are basic (see \cite{CDMb}):
\begin{sllem}
\label{lem:3.5-lemma-maj-rho-S}For $\zeta\in P_{2\varepsilon}(z)\setminus P_{\varepsilon}(z)$
or $z\in P_{2\varepsilon}(\zeta)\setminus P_{\varepsilon}(\zeta)$,
we have:

\[
\left|\rho(\zeta)+\frac{1}{K_{0}}S(z,\zeta)\right|\gtrsim\varepsilon,\,\left(z,\zeta\right)\in\bar{\Omega}\times\bar{\Omega}.
\]

\end{sllem}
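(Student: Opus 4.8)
The plan is to prove the stronger pointwise comparison
\[
\Big|\rho(\zeta)+\tfrac{1}{K_{0}}S(z,\zeta)\Big|\gtrsim d(z,\zeta)\qquad\text{for }z,\zeta\text{ close to }\partial\Omega,
\]
and then use that the hypothesis forces $d(z,\zeta)\simeq\varepsilon$. Indeed, if $\zeta\in P_{2\varepsilon}(z)\setminus P_{\varepsilon}(z)$ then $\zeta\in P_{2\varepsilon}(z)$ gives $d(z,\zeta)\lesssim\varepsilon$ by definition of $d$, while $\zeta\notin P_{\varepsilon}(z)$, together with the monotonicity of $\varepsilon\mapsto P_{\varepsilon}(z)$ and (\ref{eq:equivalence-pseudoballs-polydisk}), gives $d(z,\zeta)\gtrsim\varepsilon$; and (\ref{eq:inclusion-polydisques-pseudo}) makes $d(z,\zeta)\simeq d(\zeta,z)$, so the symmetric hypothesis $z\in P_{2\varepsilon}(\zeta)\setminus P_{\varepsilon}(\zeta)$ is handled identically. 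Thus everything reduces to the displayed comparison.

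First dispose of the region where $\chi(z,\zeta)\neq1$. By construction of $\chi$ there is then a fixed $c_{1}>0$ with $|z-\zeta|\geq c_{1}$ or $\delta_{\Omega}(\zeta)\geq c_{1}$; since $\varepsilon\leq\varepsilon_{0}$ and all radii $\tau_{k}(z,2\varepsilon)$ of $P_{2\varepsilon}(z)$ are $\lesssim\varepsilon^{\nicefrac{1}{m}}$ by (\ref{eq:comp-tau-epsilon-tau-lambda-epsilon}), the first alternative forces $\varepsilon\gtrsim c_{1}^{m}$ and the second (as $z$ is close to $\partial\Omega$) forces $|\rho(\zeta)|\gtrsim c_{1}$ and again $\varepsilon\gtrsim1$; in either case, using $S=\chi S_{0}-(1-\chi)|z-\zeta|^{2}$, the Diederich--Fornaess estimate recalled below, and (\ref{eq:real-part-Q-zeta-z-plus-1}), one checks that $\Re\big(\rho(\zeta)+\tfrac{1}{K_{0}}S(z,\zeta)\big)\leq-c<0$, whence $\big|\rho(\zeta)+\tfrac{1}{K_{0}}S(z,\zeta)\big|\gtrsim1\gtrsim\varepsilon$. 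From now on $\chi(z,\zeta)=1$ and $S(z,\zeta)=S_{0}(z,\zeta)$, the Diederich--Fornaess support function.

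For the main case, invoke the fundamental property of $S_{0}$ (Theorem 2.2 of \cite{CDMb}, after \cite{Diederich-Fornaess-Support-Func-lineally-cvx}): there is $c>0$ such that, for $z$ close to $\zeta$ close to $\partial\Omega$,
\[
\Re S_{0}(z,\zeta)\leq\rho(z)-c\big(-\rho(\zeta)+M(z,\zeta)\big),
\]
where $M(z,\zeta)\geq0$ is the geometric quantity built from the Taylor coefficients of $\rho$ along $\zeta-z$, satisfying $|\rho(z)|+|\rho(\zeta)|+M(z,\zeta)\simeq d(z,\zeta)$. Since $\rho(z),\rho(\zeta)\leq0$ on $\overline{\Omega}$, the real part of $\rho(\zeta)+\tfrac{1}{K_{0}}S_{0}(z,\zeta)$ is negative, so
\[
\Big|\rho(\zeta)+\tfrac{1}{K_{0}}S_{0}(z,\zeta)\Big|\geq-\rho(\zeta)-\tfrac{1}{K_{0}}\Re S_{0}(z,\zeta)\geq\big(1+\tfrac{c}{K_{0}}\big)(-\rho(\zeta))+\tfrac{1}{K_{0}}(-\rho(z))+\tfrac{c}{K_{0}}M(z,\zeta)\gtrsim|\rho(z)|+|\rho(\zeta)|+M(z,\zeta)\simeq d(z,\zeta),
\]
as wanted.

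The step I expect to be the real obstacle is the identification $|\rho(z)|+|\rho(\zeta)|+M(z,\zeta)\simeq d(z,\zeta)$ --- that the gain produced by the support function has exactly the size of the pseudo-distance. The substantive direction is $\gtrsim$: starting from $\zeta\notin P_{\varepsilon}(z)$ one must exhibit, in some $(z,\varepsilon)$-extremal direction $v_{k}$, a component of $\zeta-z$ of size $\gtrsim\tau_{k}(z,\varepsilon)\simeq\tau_{k}(\zeta,\varepsilon)$ (via \lemref{3.4-maj-deriv-rho-equiv-tho-i-z-zeta}) and turn the corresponding term of $M$ into a lower bound $\gtrsim\varepsilon$ using property (\ref{geometry-2}); the delicate point is that $M$ is naturally expressed at the intrinsic scale $\delta_{\Omega}(\zeta)$, possibly far smaller than $\varepsilon$, so one must pass between the two scales via the monotonicity (\ref{eq:comp-tau-epsilon-tau-lambda-epsilon}) of $\tau(\zeta,v,\cdot)$, and it is exactly there that the finite type $m$ enters quantitatively. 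Everything else is routine.
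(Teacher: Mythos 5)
Your reduction of the statement to a pointwise bound $\bigl|\rho(\zeta)+\tfrac{1}{K_{0}}S(z,\zeta)\bigr|\gtrsim d(z,\zeta)$, and your treatment of the region where $\chi\neq1$, are unobjectionable; but the main case is not proved. Everything is delegated to the ``fundamental property'' you attribute to the support function, namely $\Re S_{0}(z,\zeta)\leq\rho(z)-c\bigl(-\rho(\zeta)+M(z,\zeta)\bigr)$ with $M\geq0$ and $|\rho(z)|+|\rho(\zeta)|+M(z,\zeta)\simeq d(z,\zeta)$, and neither statement can be correct as written. At $z=\zeta\in\Omega$ the left-hand side of your inequality is $\Re S_{0}(\zeta,\zeta)=0$ while the right-hand side equals $(1+c)\rho(\zeta)-cM(\zeta,\zeta)<0$, so the inequality fails on a whole neighborhood of the diagonal; likewise $d(\zeta,\zeta)=0$ while $|\rho(\zeta)|>0$, so the claimed equivalence fails too (and it also fails when $z,\zeta$ are deep inside and close together, where $d(z,\zeta)$ is small but $|\rho(z)|+|\rho(\zeta)|$ is not). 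What is true in general is only the one-sided consequence (\ref{eq:real-part-Q-zeta-z-plus-1}) of the choice of $K_{0}$, which gives $\gtrsim|\rho(\zeta)|$, and that is not enough on the corona when $|\rho(\zeta)|\ll\varepsilon$. The additional gain of size $\varepsilon$ for $\zeta\in P_{2\varepsilon}(z)\setminus P_{\varepsilon}(z)$ is exactly the quantitative content of the Diederich--Fornaess construction read in a $(z,\varepsilon)$-extremal basis (the point you yourself flag as ``the real obstacle'' and do not carry out), so the proposal assumes, in a mis-stated form, precisely what the lemma asserts.

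For comparison: the paper does not reprove this lemma either; it quotes it as one of the basic estimates from \cite{CDMb}, where it is obtained from the precise anisotropic estimates satisfied by the Diederich--Fornaess support function of \cite{Diederich-Fornaess-Support-Func-lineally-cvx} (in the spirit of \cite{Diederich-Fischer_Holder-linally-convex}), i.e.\ by expanding $S_{0}$ in extremal coordinates and using property (\ref{geometry-2}) of the geometry to extract a term of size $\gtrsim\varepsilon$ when $\zeta$ leaves $P_{\varepsilon}(z)$. If you want a self-contained argument you must quote that support-function estimate in its correct form and perform this extraction; as it stands, your proof has a genuine gap at its only substantive step.
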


\begin{sllem}
\label{lem:3.6-Lemma-maj-deriv_rho_Q}For $z_{0}$ close to $\partial\Omega$,
$\varepsilon$ small and $z,\,\zeta\in P_{\varepsilon}(z_{0})$, in
the coordinate system $\left(\zeta_{i}\right)$ associated to the
$\left(z_{0},\varepsilon\right)$-extremal basis, we have:
\begin{enumerate}
\item $\left|Q_{i}(z,\zeta)\right|+\left|Q_{i}(\zeta,z)\right|\lesssim\frac{\varepsilon}{\tau_{i}}$,
\item $\left|\frac{\partial Q_{i}(z,\zeta)}{\partial\overline{\zeta_{j}}}\right|+\left|\frac{\partial Q_{i}(z,\zeta)}{\partial\zeta_{j}}\right|+\left|\frac{\partial Q_{i}(z,\zeta)}{\partial z_{j}}\right|\lesssim\frac{\varepsilon}{\tau_{i}\tau_{j}}$,
\item $\left|\frac{\partial^{2}Q_{i}(z,\zeta)}{\partial\overline{\zeta}_{j}\partial z_{k}}\right|+\left|\frac{\partial^{2}Q_{i}(z,\zeta)}{\partial\overline{\zeta}_{j}\partial\overline{z}_{k}}\right|\lesssim\frac{\varepsilon}{\tau_{i}\tau_{j}\tau_{k}}$,
\end{enumerate}
where $\tau_{i}$ are either $\tau_{i}(z,\varepsilon)$, $\tau_{i}(\zeta,\varepsilon)$
or $\tau_{i}\left(z_{0},\varepsilon\right)$.\end{sllem}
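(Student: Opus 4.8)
The plan is to reduce the three estimates to weighted derivative bounds on the Diederich--Fornaess support function $S_{0}$, and then to pass to its Hefer coefficients by the elementary integral formula. First, since $z_{0}$ is close to $\partial\Omega$ and $\varepsilon$ is small, every $z,\zeta\in P_{\varepsilon}(z_{0})$ has $|z-\zeta|\lesssim\sum_{k}\tau_{k}(z_{0},\varepsilon)$ and $\delta_{\Omega}(\zeta)\le\delta_{\Omega}(z_{0})+|z_{0}-\zeta|$, both small; hence the truncating function $\chi$ is identically $1$ near $(z,\zeta)$, so by (\ref{eq:def-S-and-Qi}) we have $S=S_{0}$ and $Q_{i}=Q_{i}^{0}$ there, where $S_{0}(\cdot,\zeta)$ is holomorphic and $S_{0}=\sum_{i}Q_{i}^{0}(z_{i}-\zeta_{i})$. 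Since $S_{0}(\zeta,\zeta)=0$ one may take, for such a pair,
\[
Q_{i}^{0}(z,\zeta)=\int_{0}^{1}\tfrac{\partial S_{0}}{\partial z_{i}}\bigl(\zeta+t(z-\zeta),\zeta\bigr)\,dt ,
\]
which is holomorphic in $z$ and smooth in $\zeta$; in particular $\partial_{\bar z_{j}}Q_{i}^{0}\equiv0$ near $(z,\zeta)$. So it suffices to estimate $Q_{i}^{0}$ and the prescribed first and second derivatives.

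The key input is the weighted estimate, for $z,\zeta\in P_{\varepsilon}(z_{0})$ and in the coordinates $(\zeta_{i})$ of a $(z_{0},\varepsilon)$-extremal basis (write $\tau_{k}=\tau_{k}(z_{0},\varepsilon)$),
\[
\bigl|\partial_{z}^{a}\partial_{\zeta}^{b}\partial_{\bar\zeta}^{c}S_{0}(z,\zeta)\bigr|\lesssim\varepsilon\prod_{k}\tau_{k}^{-(a_{k}+b_{k}+c_{k})} .
\]
This follows from the explicit form of $S_{0}$ (see \cite{Diederich-Fornaess-Support-Func-lineally-cvx} and Theorem 2.2 of \cite{CDMb}): near the diagonal $S_{0}(z,\zeta)=\sum_{\mu}c_{\mu}(\zeta)(z-\zeta)^{\mu}$ with $|\mu|$ bounded, each $c_{\mu}$ being a polynomial in the $\le 2m$-jet of $\rho$ at $\zeta$ of the correct anisotropic degree $\mu$. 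By property (\ref{geometry-1}) applied at $\zeta$, together with the equivalences $\tau\bigl(\zeta,v_{k}(z_{0},\varepsilon),\varepsilon\bigr)\simeq\tau_{k}$ of \lemref{3.4-maj-deriv-rho-equiv-tho-i-z-zeta}, every derivative $\partial^{\gamma}\rho(\zeta)$ occurring there is $\lesssim\varepsilon\prod_{k}\tau_{k}^{-\gamma_{k}}$, and differentiating $c_{\mu}$ once in $\zeta_{j}$ or $\bar\zeta_{j}$ raises a differentiation order of $\rho$, hence divides the bound by $\tau_{j}$; combined with $|(z-\zeta)_{k}|\lesssim\tau_{k}$ on $P_{\varepsilon}(z_{0})$ and with $\varepsilon,\tau_{k}\le\varepsilon_{0}$ (which absorbs the extra factors of $\varepsilon$ coming from product terms), this gives the displayed bound, and $\partial_{\bar z}S_{0}=0$.

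Now $P_{\varepsilon}(z_{0})$ is convex, so $w:=\zeta+t(z-\zeta)\in P_{\varepsilon}(z_{0})$ for $t\in[0,1]$, and $\partial w/\partial z_{j}=t\,e_{j}$, $\partial w/\partial\zeta_{j}=(1-t)e_{j}$, $\partial\bar w/\partial\bar\zeta_{j}=(1-t)e_{j}$, $\partial\bar w/\partial\bar z_{j}=t\,e_{j}$. Applying the chain rule to the integral formula above together with the weighted bound on $S_{0}$ then gives, all at once, $|Q_{i}^{0}|\lesssim\varepsilon\tau_{i}^{-1}$ (this is (1)); $|\partial_{z_{j}}Q_{i}^{0}|+|\partial_{\zeta_{j}}Q_{i}^{0}|+|\partial_{\bar\zeta_{j}}Q_{i}^{0}|\lesssim\varepsilon\tau_{i}^{-1}\tau_{j}^{-1}$ (this is (2); note that in $\partial_{\bar\zeta_{j}}$ the term where the derivative falls on the $w$-dependence vanishes because $\partial_{\bar w}S_{0}=0$, and $\partial_{\bar z_{j}}Q_{i}^{0}=0$); and, differentiating once more, $|\partial_{\bar\zeta_{j}}\partial_{z_{k}}Q_{i}^{0}|\lesssim\varepsilon\tau_{i}^{-1}\tau_{j}^{-1}\tau_{k}^{-1}$ while $\partial_{\bar\zeta_{j}}\partial_{\bar z_{k}}Q_{i}^{0}=0$ (again using $\partial_{\bar w}S_{0}=0$), which is (3). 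Finally the equivalences $\tau_{k}(z_{0},\varepsilon)\simeq\tau_{k}(z,\varepsilon)\simeq\tau_{k}(\zeta,\varepsilon)$ (second assertion of \lemref{3.4-maj-deriv-rho-equiv-tho-i-z-zeta}) allow the bounds to be stated with any of the three choices of $\tau_{i}$.

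The only genuinely nontrivial ingredient is the weighted estimate on $S_{0}$ and its derivatives used in the second step; everything afterwards is bookkeeping with the chain rule. That estimate rests on the fact that the Diederich--Fornaess support function for a lineally convex domain of finite type is \emph{weighted homogeneous} for the anisotropic geometry -- the coefficient of $(z-\zeta)^{\mu}$ being a polynomial in the jet of $\rho$ at $\zeta$ compatible with the anisotropic degree $\mu$, so that after applying property (\ref{geometry-1}) the powers of the $\tau_{k}$ cancel exactly. This homogeneity is precisely what the construction in \cite{Diederich-Fornaess-Support-Func-lineally-cvx} (see also \cite{CDMb}) provides, and it is where the hypotheses on $\Omega$ enter; it is the real content behind the lemma, and the reason these estimates are merely recalled here.
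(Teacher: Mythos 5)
Your chain-rule bookkeeping (the radial integral, the vanishing of the $\bar z$- and $\bar w$-derivatives by holomorphy in the first argument, the use of \lemref{3.4-maj-deriv-rho-equiv-tho-i-z-zeta} to switch between $\tau_{i}(z_{0},\varepsilon)$, $\tau_{i}(z,\varepsilon)$, $\tau_{i}(\zeta,\varepsilon)$) is fine, but the proposal has two genuine gaps. First, the lemma is about the \emph{specific} functions $Q_{i}$ that enter the kernel $K_{N}^{1}$, i.e.\ the coefficients of the decomposition of the Diederich--Fornaess support function recalled from \cite{CDMb} and \cite{Diederich-Fischer_Holder-linally-convex}; the relation (\ref{eq:def-S-and-Qi}) alone does not determine them (for $n\geq2$ a decomposition $S_{0}=\sum_{i}Q_{i}(z_{i}-\zeta_{i})$ is far from unique), so proving estimates for your own radial Hefer coefficients $Q_{i}^{0}(z,\zeta)=\int_{0}^{1}\partial_{z_{i}}S_{0}\bigl(\zeta+t(z-\zeta),\zeta\bigr)dt$ establishes the lemma only if you verify that these coincide with (or can legitimately replace) the $Q_{i}$ actually used in the construction, which you never do. Second, your ``key input'' --- the full array of anisotropic bounds $\bigl|\partial_{z}^{a}\partial_{\zeta}^{b}\partial_{\bar\zeta}^{c}S_{0}\bigr|\lesssim\varepsilon\prod_{k}\tau_{k}^{-(a_{k}+b_{k}+c_{k})}$ in $(z_{0},\varepsilon)$-extremal coordinates --- is precisely the nontrivial content of the lemma, and your justification of it is only a heuristic: the claim that $S_{0}$ is a polynomial in $z-\zeta$ whose coefficients are weighted-homogeneous in the jet of $\rho$ at $\zeta$ so that, after property (\ref{geometry-1}), ``the powers of the $\tau_{k}$ cancel exactly'' is asserted, not proved, and establishing such anisotropic estimates for the support function of \cite{Diederich-Fornaess-Support-Func-lineally-cvx} and for its decomposition is exactly the work carried out in \cite{Diederich-Fischer_Holder-linally-convex}. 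You acknowledge this yourself, but as written the argument assumes the hard part in a form (bounds on all mixed derivatives of $S_{0}$) that is not literally the statement of the cited results, and then applies it to objects ($Q_{i}^{0}$) that may differ from those of the paper.

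For comparison, the paper does not attempt any of this: its proof is the one-line remark that the estimates of the lemma are those proved in \cite{Diederich-Fischer_Holder-linally-convex} for the $Q_{i}$ in extremal coordinates, combined with \lemref{3.4-maj-deriv-rho-equiv-tho-i-z-zeta} to pass between the bases at $z_{0}$, $z$ and $\zeta$. If you wish to rely on the literature, the clean route is to quote the estimates for the $Q_{i}$ themselves as stated there and add only the base-change step; if you wish to rederive them from an explicit formula for $S_{0}$ via your Hefer integral, then the weighted-homogeneity claim must be substantiated from that formula and the identification with the paper's $Q_{i}$ must be addressed --- that is where the real content of the lemma lies.
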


\begin{proof}
This Lemma follows \cite{Diederich-Fischer_Holder-linally-convex}
and \lemref{3.4-maj-deriv-rho-equiv-tho-i-z-zeta}.
\end{proof}
\bigskip{}

The preceding lemmas and the properties of the geometry easily give
the following estimates of the kernel $K_{N}^{1}$:
\begin{sllem}
For $\varepsilon$ small enough and $z$ sufficiently close to the
boundary, with the choice (\ref{eq:basic-choice-for-s(z,zeta)}) for
$s$, we have:

If $\zeta\in P_{\varepsilon}(z)$ or $z\in P_{\varepsilon}(\zeta)$,
\[
\left|K_{N}^{1}(z,\zeta)\right|\lesssim\frac{\left|\rho(\zeta)\right|^{N-1}\left(\left|\rho(\zeta)\right|+\varepsilon\right)\varepsilon^{n-r}}{\prod_{i=1}^{n-r}\tau_{i}^{2}\left|\frac{1}{K_{0}}S(z,\zeta)+\rho(\zeta)\right|^{N+n-r}}\frac{1}{\left|z-\zeta\right|^{2r-1}},
\]
where $\tau_{i}$ is $\tau_{i}(z,\varepsilon)$ or $\tau_{i}(\zeta,\varepsilon)$.\end{sllem}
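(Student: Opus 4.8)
The plan is to estimate $K_N^1(z,\zeta)$ by bounding each factor in the explicit expression in terms of the extremal data $\tau_i=\tau_i(z,\varepsilon)$ (equivalently $\tau_i(\zeta,\varepsilon)$, by \lemref{3.4-maj-deriv-rho-equiv-tho-i-z-zeta}), where $\varepsilon=d(z,\zeta)$. First I would fix $z$ close to the boundary, take $\varepsilon$ so that $\zeta\in P_{\varepsilon}(z)$ (the symmetric case follows from (\ref{eq:inclusion-polydisques-pseudo}) and (\ref{eq:polydiscs-pseudodistance})), and work in the coordinate system associated to a $(z,\varepsilon)$-extremal basis $(v_1,\dots,v_n)$. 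In that frame the kernel is the sum over $n-r\le k\le n-1$ of terms of the shape
\[
\frac{\rho(\zeta)^{k+N}\,s\wedge(\partial_{\bar\zeta}Q)^{n-r}\wedge(\partial_{\bar z}Q)^{k+r-n}\wedge(\partial_{\bar z}s)^{n-k-1}}{\langle s(z,\zeta),\zeta-z\rangle^{2(n-k)}\,\bigl(\tfrac{1}{K_0}S(z,\zeta)+\rho(\zeta)\bigr)^{k+N}},
\]
so the game is purely bookkeeping of sizes of differential forms.

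The key steps, in order. (i) Using (\ref{eq:real-part-Q-zeta-z-plus-1}) and \lemref{3.5-lemma-maj-rho-S}, I would bound the denominator factor $|\tfrac{1}{K_0}S+\rho|$ below by a quantity $\gtrsim|\rho(\zeta)|+\varepsilon$ (more precisely $\gtrsim|\rho(\zeta)|$ always, and $\gtrsim\varepsilon$ on $P_{2\varepsilon}\setminus P_{\varepsilon}$; combining these over the dyadic decomposition gives the factor $|\rho(\zeta)|^{N-1}(|\rho(\zeta)|+\varepsilon)$ against $|\tfrac1{K_0}S+\rho|^{N+n-r}$ after one factor of $\rho$ in the numerator is absorbed and the remaining $\rho(\zeta)^{k+N-?}$ is matched). (ii) For the $\langle s,\zeta-z\rangle$ factor, with the choice (\ref{eq:basic-choice-for-s(z,zeta)}) one has $\langle s,\zeta-z\rangle=|z-\zeta|^2$ up to lower order, and $|s|\lesssim|z-\zeta|$, $|\partial_{\bar z}s|\lesssim 1$; so $s\wedge(\partial_{\bar z}s)^{n-k-1}/\langle s,\zeta-z\rangle^{2(n-k)}$ contributes $|z-\zeta|\cdot 1 / |z-\zeta|^{4(n-k)} = |z-\zeta|^{1-4(n-k)}$, of which I keep one power $|z-\zeta|^{-(2r-1)}$ and spend the rest. (iii) For the $Q$-forms: by \lemref{3.6-Lemma-maj-deriv_rho_Q}, $|\partial_{\bar\zeta}Q_i|,|\partial_{\bar z}Q_i|\lesssim \varepsilon/(\tau_i\tau_j)$ componentwise, but after wedging, the antisymmetrization forces distinct indices, so $(\partial_{\bar\zeta}Q)^{n-r}\wedge(\partial_{\bar z}Q)^{k+r-n}$ — a form of total $Q$-degree $k$ — is bounded by $\varepsilon^{k}\big/\prod_{i\in A}\tau_i\cdot\prod_{j\in B}\tau_j$ over index sets; the crucial combinatorial point (as in \cite{CDMb}) is that each $\tau_i$ with $i\le n-r$ appears with total exponent $2$ and the larger $\tau_i$'s ($i>n-r$) cancel against the extra powers of $|z-\zeta|$ from step (ii) via property (\ref{geometry-4})(a) ($|\lambda_i|\lesssim\tau_i$) and the estimate $\int_{|z-\zeta|<\tau_k}|z-\zeta|^{-(2(n-k)+1)}$-type bounds relating $|z-\zeta|$ to the $\tau_i$. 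Putting (i)–(iii) together yields exactly
\[
\left|K_N^1(z,\zeta)\right|\lesssim \frac{|\rho(\zeta)|^{N-1}(|\rho(\zeta)|+\varepsilon)\,\varepsilon^{n-r}}{\prod_{i=1}^{n-r}\tau_i^2\,\bigl|\tfrac1{K_0}S(z,\zeta)+\rho(\zeta)\bigr|^{N+n-r}}\cdot\frac{1}{|z-\zeta|^{2r-1}}.
\]

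The main obstacle is step (iii): making rigorous the claim that wedging the $Q$-differentials produces only the "diagonal" products $\prod_{i\le n-r}\tau_i^2$ times harmless large-$\tau$ factors, rather than an uncontrolled product of small $\tau_i$'s. This requires expanding the wedge $(\partial_{\bar\zeta}Q)^{n-r}\wedge(\partial_{\bar z}Q)^{k+r-n}$ in the extremal coordinates, tracking which $d\bar\zeta_i$, $dz_j$ survive the antisymmetrization against the fixed top-degree form $s\wedge(\partial_{\bar z}s)^{n-k-1}$ in $\zeta$ (which "uses up" $n-k-1$ of the $d\bar\zeta$ slots), and then invoking \lemref{3.6-Lemma-maj-deriv_rho_Q} together with property (\ref{geometry-3}) to see that the worst-case coefficient is governed by the first $n-r$ radii. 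I would handle the remaining $|z-\zeta|$-powers by the elementary observation that on $P_\varepsilon(z)$ one has $|z-\zeta|\lesssim\tau_{i_0}$ for the index $i_0$ of the largest surviving direction, trading $|z-\zeta|^{-4(n-k)+2r}$ against $\prod_{j>n-r}\tau_j^{-1}$ and $\varepsilon$-powers using (\ref{geometry-5}); everything else (the $\rho$ and $S$ bookkeeping in steps (i)–(ii)) is routine given \lemref{3.5-lemma-maj-rho-S} and (\ref{eq:real-part-Q-zeta-z-plus-1}).
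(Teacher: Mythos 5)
Your plan misses the single observation on which the paper's proof rests, and without it the term-by-term bookkeeping you outline cannot produce the stated bound. In the regime of the lemma ($\zeta\in P_{\varepsilon}(z)$ or $z\in P_{\varepsilon}(\zeta)$, $\varepsilon$ small, $z$ close to $\partial\Omega$) both $\left|z-\zeta\right|$ and $\delta_{\Omega}(\zeta)$ are small, so the cut-off $\chi(z,\zeta)$ in (\ref{eq:def-S-and-Qi}) is identically $1$: there $S=S_{0}$ is the Diederich--Fornaess support function, which is \emph{holomorphic in $z$}, and so are the coefficients $Q_{i}(z,\zeta)$; hence $Q(z,\zeta)=\frac{1}{K_{0}\rho(\zeta)}\sum Q_{i}(z,\zeta)\,d\left(\zeta_{i}-z_{i}\right)$ satisfies $\partial_{\bar z}Q\equiv0$, every term of the sum with $k>n-r$ vanishes identically, and $K_{N}^{1}$ collapses to the single term $k=n-r$. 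For that term the only singularity in $\zeta-z$ is the block $s\wedge\left(\partial_{\bar z}s\right)^{r-1}$ against the powers of $\left\langle s(z,\zeta),\zeta-z\right\rangle=\left|z-\zeta\right|^{2}$, which with $\left|s\right|\lesssim\left|z-\zeta\right|$ and $\left|\partial_{\bar z}s\right|\lesssim1$ has size $\left|z-\zeta\right|^{-(2r-1)}$; the remaining factor $\left(\partial_{\bar\zeta}Q\right)^{n-r}$ produces, from $\partial_{\bar\zeta}\bigl(\rho^{-1}Q_{i}\bigr)$, the two kinds of terms $\rho^{-1}\partial_{\bar\zeta}Q_{i}$ and $\rho^{-2}Q_{i}\,\partial_{\bar\zeta}\rho$, which together with \lemref{3.6-Lemma-maj-deriv_rho_Q}, \lemref{3.4-maj-deriv-rho-equiv-tho-i-z-zeta} and property (\ref{geometry-1}) yield exactly the numerator $\left|\rho(\zeta)\right|^{N-1}\left(\left|\rho(\zeta)\right|+\varepsilon\right)\varepsilon^{n-r}\big/\prod_{i=1}^{n-r}\tau_{i}^{2}$. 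In particular the factor $\left(\left|\rho(\zeta)\right|+\varepsilon\right)$ does not come from any dyadic decomposition, as you suggest in step (i): this is a pointwise kernel estimate; dyadic shells only appear later, in \lemref{3.8-basic-estimates-kernel} and the integrations.

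Your step (iii), which you yourself flag as the main obstacle, cannot be repaired as stated. First, \lemref{3.6-Lemma-maj-deriv_rho_Q} gives no bound at all for $\partial Q_{i}/\partial\bar z_{j}$ (it only controls the $\partial_{\bar\zeta}$, $\partial_{\zeta}$ and $\partial_{z}$ derivatives), precisely because $Q_{i}$ is holomorphic in $z$ where those estimates hold; so the input you invoke for the factors $\left(\partial_{\bar z}Q\right)^{k+r-n}$ is not available. Second, even granting such a bound, the terms with $k>n-r$ carry $\bigl(\frac{1}{K_{0}}S(z,\zeta)+\rho(\zeta)\bigr)^{-(k+N)}$, while the right-hand side of the lemma has only the power $N+n-r$; converting the excess power via (\ref{eq:real-part-Q-zeta-z-plus-1}) costs factors of $\left|\rho(\zeta)\right|^{-1}$, and at points of $P_{\varepsilon}(z)$ where $\left|\frac{1}{K_{0}}S(z,\zeta)+\rho(\zeta)\right|\simeq\left|\rho(\zeta)\right|\ll\varepsilon$ this makes the crude estimate of those terms strictly larger than the claimed bound, with no compensating smallness elsewhere. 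In other words, the sharpness of the stated estimate is exactly the vanishing of $\partial_{\bar z}Q$ in this local region; once you use it, the wedge combinatorics you describe reduces to the routine bookkeeping above and the difficulty disappears.
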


\begin{proof}
Indeed, under the conditions of the lemma, $Q(z,\zeta)=\frac{1}{K_{0}\rho(\zeta)}\sum Q_{i}(z,\zeta)d\left(\zeta_{i}-z_{i}\right)$
is holomorphic in $z$ and $K_{N}^{1}$ is reduced to
\[
K_{N}^{1}(z,\zeta)=c\frac{\rho(\zeta)^{n+N-r}\wedge\left(d_{\overline{\zeta}}Q\right)^{n-r}\wedge\left(\partial_{\overline{z}}s\right)^{r-1}}{\left|z-\zeta\right|^{2r}\left(\frac{1}{K_{0}}S(z,\zeta)+\rho(\zeta)\right)^{N+n-r}}.
\]

\end{proof}
In particular:
\begin{sllem}
\label{lem:3.8-basic-estimates-kernel}For $\varepsilon$ small enough
and $z$ sufficiently close to the boundary and $s$ given by (\ref{eq:basic-choice-for-s(z,zeta)}):
\begin{enumerate}
\item If $\varepsilon\leq\delta_{\partial\Omega}(z)$, for $\zeta\in P_{\varepsilon}(z)$
or $z\in P_{\varepsilon}(\zeta)$,
\[
\left|K_{N}^{1}(z,\zeta)\right|\lesssim\frac{1}{\prod_{i=1}^{n-r}\tau_{i}^{2}}\frac{1}{\left|z-\zeta\right|^{2r-1}},
\]
where $\tau_{i}$ is $\tau_{i}(z,\varepsilon)$ or $\tau_{i}(\zeta,\varepsilon)$.
\end{enumerate}
For $1+r-n\leq k\leq N+n-1$ and $\zeta\in P_{2\varepsilon}(z)\setminus P_{\varepsilon}(z)$
or $z\in P_{2\varepsilon}(\zeta)\setminus P_{\varepsilon}(\zeta)$:
\begin{enumerate}{\SEI{1}}
\item 
\[
\left|K_{N}^{1}(z,\zeta)\right|\lesssim\frac{\left|\rho(\zeta)\right|^{k}}{\varepsilon^{k}}\frac{1}{\prod_{i=1}^{n-r}\tau_{i}^{2}}\frac{1}{\left|z-\zeta\right|^{2r-1}},
\]

\item 
\[
\left|\nabla_{z}K_{N}^{1}(z,\zeta)\right|\lesssim\frac{\left|\rho(\zeta)\right|^{k}}{\varepsilon^{k+1}}\frac{1}{\prod_{i=1}^{n-r}\tau_{i}^{2}}\frac{1}{\left|z-\zeta\right|^{2r-1}},
\]

\end{enumerate}
where $\tau_{i}$ is either $\tau_{i}(z,\varepsilon)$ or $\tau_{i}(\zeta,\varepsilon)$.
\end{sllem}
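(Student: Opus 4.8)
The plan is to deduce all three estimates from the preceding lemma by inserting elementary comparisons between the quantities $\left|\rho(\zeta)\right|$, $\varepsilon$ and $\left|\frac{1}{K_{0}}S(z,\zeta)+\rho(\zeta)\right|$; only the gradient bound~(3) will need the explicit form of $K_{N}^{1}$. For~(1), assuming $\varepsilon\leq\delta_{\partial\Omega}(z)=\left|\rho(z)\right|$ and, say, $\zeta\in P_{\varepsilon}(z)$ (the case $z\in P_{\varepsilon}(\zeta)$ being reduced to this one by (\ref{eq:inclusion-polydisques-pseudo}), (\ref{eq:polydiscs-pseudodistance}) and \lemref{3.4-maj-deriv-rho-equiv-tho-i-z-zeta}, which also let us interchange $\tau_{i}(z,\varepsilon)$ and $\tau_{i}(\zeta,\varepsilon)$), I would first note, writing $\zeta=z+\sum_{k}\lambda_{k}v_{k}$ in a $\left(z,\varepsilon\right)$-extremal basis with $\left|\lambda_{k}\right|\leq c_{0}\tau_{k}(z,\varepsilon)$ and using property (\ref{geometry-2}) together with $\tau_{1}(z,\varepsilon)=\varepsilon$ (property (\ref{geometry-5})), that $\left|\rho(\zeta)-\rho(z)\right|\lesssim c_{0}\varepsilon$, hence $\left|\rho(\zeta)\right|\simeq\left|\rho(z)\right|\geq\varepsilon$ since $c_{0}$ is small. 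Then (\ref{eq:real-part-Q-zeta-z-plus-1}) gives $\left|\frac{1}{K_{0}}S(z,\zeta)+\rho(\zeta)\right|\gtrsim\left|\rho(\zeta)\right|$, so the $\rho$-and-$\varepsilon$ factor of the preceding lemma satisfies
\[
\frac{\left|\rho(\zeta)\right|^{N-1}\left(\left|\rho(\zeta)\right|+\varepsilon\right)\varepsilon^{n-r}}{\left|\frac{1}{K_{0}}S(z,\zeta)+\rho(\zeta)\right|^{N+n-r}}\;\lesssim\;\frac{\left|\rho(\zeta)\right|^{N}\varepsilon^{n-r}}{\left|\rho(\zeta)\right|^{N+n-r}}\;=\;\left(\frac{\varepsilon}{\left|\rho(\zeta)\right|}\right)^{n-r}\;\lesssim\;1,
\]
which is~(1).

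For~(2), I would apply the preceding lemma with $2\varepsilon$ in place of $\varepsilon$ (legitimate since $\zeta\in P_{2\varepsilon}(z)$), noting that $\tau_{i}(\cdot,2\varepsilon)\simeq\tau_{i}(\cdot,\varepsilon)$ by (\ref{eq:polydiscs-pseudodistance}) and \lemref{3.4-maj-deriv-rho-equiv-tho-i-z-zeta}, so all quantities in the statement stay as written. On the shell $P_{2\varepsilon}(z)\setminus P_{\varepsilon}(z)$, \lemref{3.5-lemma-maj-rho-S} gives $\left|\frac{1}{K_{0}}S(z,\zeta)+\rho(\zeta)\right|\gtrsim\varepsilon$, which with (\ref{eq:real-part-Q-zeta-z-plus-1}) yields $\left|\frac{1}{K_{0}}S(z,\zeta)+\rho(\zeta)\right|\gtrsim\left|\rho(\zeta)\right|+\varepsilon$; then the $\rho$-and-$\varepsilon$ factor is $\lesssim\frac{\left|\rho(\zeta)\right|^{N-1}\varepsilon^{n-r}}{\left(\left|\rho(\zeta)\right|+\varepsilon\right)^{N+n-r-1}}$, a quantity homogeneous of degree $0$ in $\left(\left|\rho(\zeta)\right|,\varepsilon\right)$, hence a bounded function of $t=\left|\rho(\zeta)\right|/\varepsilon$ that is moreover $\lesssim\min\{t^{N-1},t^{-(n-r)}\}$, so $\lesssim t^{k}=\left|\rho(\zeta)\right|^{k}\varepsilon^{-k}$ for $k$ in the stated range. (One checks this in the two regimes separately: for $\left|\rho(\zeta)\right|\gtrsim\varepsilon$ one argues as in~(1); for $\left|\rho(\zeta)\right|\lesssim\varepsilon$ — which holds whenever $\delta_{\partial\Omega}(z)\lesssim\varepsilon$, since $\zeta\in P_{2\varepsilon}(z)$ forces $\left|\rho(\zeta)\right|\lesssim\delta_{\partial\Omega}(z)+\varepsilon$ — one simply distributes the powers of $\left|\rho(\zeta)\right|$ and $\varepsilon$ appearing in the binomial.) The case $z\in P_{2\varepsilon}(\zeta)\setminus P_{\varepsilon}(\zeta)$ is symmetric.

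For~(3), I would go back to the explicit expression of $K_{N}^{1}$ which, on the shell (where $\chi\equiv1$ and $Q$ is holomorphic in $z$), is the single reduced term displayed in the proof of the preceding lemma. With the choice (\ref{eq:basic-choice-for-s(z,zeta)}) one has $\left\langle s(z,\zeta),\zeta-z\right\rangle =\left|z-\zeta\right|^{2}$ and $\partial_{\bar z}s=-\sum_{i}d\bar z_{i}\wedge d\left(\zeta_{i}-z_{i}\right)$ has constant coefficients, so a $z$-derivative of $K_{N}^{1}$ can fall only on a power of $\left|z-\zeta\right|$, on a power of $\frac{1}{K_{0}}S(z,\zeta)+\rho(\zeta)$, or on a coefficient of $s$ or of $\left(\partial_{\bar\zeta}Q\right)^{n-r}$, and in each case the resulting extra factor is $\lesssim\varepsilon^{-1}$: for the first, because $\left|z-\zeta\right|\gtrsim\varepsilon$ on the shell (since $\zeta\notin P_{\varepsilon}(z)$ and the smallest radius of $P_{\varepsilon}(z)$ is $\tau_{1}(z,\varepsilon)=\varepsilon$, so $\left|z-\zeta\right|>c_{0}\tau_{1}(z,\varepsilon)$); for the second, because $\left|\partial_{z}S(z,\zeta)\right|\lesssim1$ by (\ref{eq:def-S-and-Qi}) and \lemref{3.6-Lemma-maj-deriv_rho_Q} while $\left|\frac{1}{K_{0}}S(z,\zeta)+\rho(\zeta)\right|\gtrsim\varepsilon$ by \lemref{3.5-lemma-maj-rho-S}; for the third, because the extra $z$-derivative of a $Q_{i}$ (or of $Q_{i}\,\partial_{\bar\zeta_{j}}\rho$) is controlled by \lemref{3.6-Lemma-maj-deriv_rho_Q}\,(2)--(3) and costs at most $\tau_{k}^{-1}\leq\varepsilon^{-1}$ relative to the undifferentiated coefficient. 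Collecting these gives $\left|\nabla_{z}K_{N}^{1}(z,\zeta)\right|\lesssim\varepsilon^{-1}$ times the bound of~(2), i.e.~(3).

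I expect~(3) to be the main obstacle: one must verify that $z$-differentiation never worsens the singular factor $\left|z-\zeta\right|^{-(2r-1)}$ by more than the admissible $\varepsilon^{-1}$, and this rests precisely on the elementary but essential observation that $\left|z-\zeta\right|\gtrsim\varepsilon$ on the shell, combined with the derivative estimates of \lemref{3.6-Lemma-maj-deriv_rho_Q}.
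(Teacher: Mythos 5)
Your derivation follows the route the paper itself intends: the lemma is stated there without any separate proof, as an immediate consequence of the preceding kernel estimate together with (\ref{eq:real-part-Q-zeta-z-plus-1}) and \lemref{3.5-lemma-maj-rho-S}, and your parts (1) and (3) carry this out correctly. In particular, for (1) the observation $\left|\rho(\zeta)\right|\simeq\left|\rho(z)\right|\gtrsim\varepsilon$ on $P_{\varepsilon}(z)$ when $\varepsilon\leq\delta_{\partial\Omega}(z)$ is the right reduction, and for (3) the two key points — $\left|z-\zeta\right|\gtrsim\varepsilon$ off $P_{\varepsilon}(z)$, and the fact that by \lemref{3.6-Lemma-maj-deriv_rho_Q} (and $\left|\partial_{z}S\right|\lesssim1$, $\left|\tfrac{1}{K_{0}}S+\rho\right|\gtrsim\varepsilon$) each $z$-derivative costs at most a factor $\varepsilon^{-1}$ — are exactly what is needed.

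There is, however, a genuine flaw in (2) at the upper end of the range of $k$. From $\frac{\left|\rho(\zeta)\right|^{N-1}\varepsilon^{n-r}}{\left(\left|\rho(\zeta)\right|+\varepsilon\right)^{N+n-r-1}}\simeq\min\left\{ t^{N-1},t^{r-n}\right\}$ with $t=\left|\rho(\zeta)\right|/\varepsilon$ you conclude ``$\lesssim t^{k}$ for $k$ in the stated range''; but $\min\left\{ t^{N-1},t^{r-n}\right\} \leq t^{k}$ for all $t>0$ holds exactly when $r-n\leq k\leq N-1$, and fails as $t\to0$ whenever $k>N-1$ — and the regime $\left|\rho(\zeta)\right|\ll\varepsilon$ does occur on the shell when $z$ is at distance $\lesssim\varepsilon$ from the boundary. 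So your argument proves (2) and (3) only for $r-n\leq k\leq N-1$, not up to $k=N+n-1$ as you (and the statement) assert. This restricted range is all the paper ever uses (it invokes the lemma with $k=-1$, $k=\gamma/p<1$, and $k$ of the order of $\gamma<N-1$; note also that $r-n\leq k$ covers $k=-1$ even when $r=n-1$, which the stated lower endpoint $1+r-n$ does not), and it is doubtful that larger $k$ is attainable at all: after expanding $Q=\frac{1}{K_{0}\rho(\zeta)}\sum Q_{i}\,d\left(\zeta_{i}-z_{i}\right)$, the terms of $K_{N}^{1}$ containing $\partial_{\bar{\zeta}}\rho/\rho(\zeta)^{2}$ vanish only to order $N-1$ in $\rho(\zeta)$, so no bound of the form $\left(\left|\rho(\zeta)\right|/\varepsilon\right)^{k}$ with $k>N-1$ can hold for the whole kernel. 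You should either restrict your conclusion to $k\leq N-1$ (which suffices for every application in the paper) or flag the stated upper endpoint as an overstatement; as written, your deduction of the full range rests on a false inequality.
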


\begin{sllem}
\label{lem:Lemma-3-9}For $z\in\Omega$, close to $\partial\Omega$,
$\delta$ small, $r\in\left\{ 1,\ldots,n-1\right\} $ and $0\leq\mu<1$,
\begin{equation}
\int_{P_{\delta}(z)}\frac{d\lambda(\zeta)}{\left|z-\zeta\right|^{2r-1+\mu}}\simeq\tau_{n-r+1}(z,\delta)^{1-\mu}\prod_{j=1}^{n-r}\tau_{j}^{2}(z,\delta),\label{eq:integral-mod(z-zeta)-inverse-power-1+mu}
\end{equation}
and, for $\alpha>0$,
\begin{equation}
\int_{P_{\delta}(\zeta)}\frac{\delta_{\Omega}(z)^{\alpha-1}}{\left|z-\zeta\right|^{2r-1}}d\lambda(z)\lesssim\begin{cases}
\frac{\delta^{\alpha-1}}{\alpha}\tau_{n-r+1}(\zeta,\delta)\prod_{j=1}^{n-r}\tau_{j}^{2}(\zeta,\delta), & \mbox{if }\alpha\leq1,\\
\left(\delta_{\Omega}(\zeta)+\delta\right)^{\alpha-1}\tau_{n-r+1}(\zeta,\delta)\prod_{j=1}^{n-r}\tau_{j}^{2}(\zeta,\delta), & \mbox{if }\alpha>1.
\end{cases}.\label{eq:integral-mod(z-zeta)-incerse-delta-alpha-1}
\end{equation}
\end{sllem}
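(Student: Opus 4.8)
The plan is to pass to coordinates adapted to the anisotropic geometry and reduce both statements to elementary iterated integrals over coordinate polydiscs. For $(\ref{eq:integral-mod(z-zeta)-inverse-power-1+mu})$, fix a $(z,\delta)$-extremal basis $(v_1,\dots ,v_n)$ with $v_1$ the complex normal and write $\zeta=z+\sum_k w_k v_k$. By $(\ref{eq:equivalence-pseudoballs-polydisk})$ and $(\ref{eq:def-polydisk})$ the domain $P_\delta(z)$ is, up to fixed constants, the box $\Delta=\prod_{k=1}^n\{|w_k|<c_0\tau_k\}$ with $\tau_k=\tau_k(z,\delta)$, and by the defining properties of an extremal basis and $(\ref{geometry-5})$ one has $\tau_1=\delta$ and $\tau_1\lesssim\tau_2\le\cdots\le\tau_n$, so (absorbing a constant) we may assume $\tau_1\le\cdots\le\tau_n$. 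Thus it suffices to estimate $\int_\Delta|w|^{-(2r-1+\mu)}\,d\lambda(w)$.

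I would integrate the variables in the order $w_n,w_{n-1},\dots,w_1$, using at each step the one--variable bound, valid whenever $A\lesssim\tau_k$,
\[
\int_{|w_k|<\tau_k}\frac{d\lambda(w_k)}{(A^2+|w_k|^2)^{s/2}}\ \simeq\ \begin{cases}A^{\,2-s}, & s>2,\\ \tau_k^{\,2-s}, & s<2.\end{cases}
\]
Since $\tau_1\le\cdots\le\tau_n$, when $w_k$ is integrated the surviving distance $A=(|w_1|^2+\cdots+|w_{k-1}|^2)^{1/2}$ is $\lesssim\tau_{k-1}\le\tau_k$, so the hypothesis applies throughout; and the exponents that occur, namely $2r-1+\mu,2r-3+\mu,\dots,1+\mu$, never equal $2$ because $0\le\mu<1$. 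The first $r-1$ integrations (over $w_n,\dots,w_{n-r+2}$, where the exponent is $\ge 3$) lower the exponent by $2$ each, down to $1+\mu<2$; integrating $w_{n-r+1}$ then produces the factor $\tau_{n-r+1}^{\,2-(1+\mu)}=\tau_{n-r+1}^{\,1-\mu}$ and kills the singularity, and the remaining integrations over $w_{n-r},\dots,w_1$ contribute $\prod_{j=1}^{n-r}\tau_j^2$. All bounds used are two--sided, giving the asserted equivalence. (As a check, on $\mathbb{C}^3$ with $r=2$ this yields $\tau_2^{1-\mu}\tau_1^2$.)

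For $(\ref{eq:integral-mod(z-zeta)-incerse-delta-alpha-1})$ I use a $(\zeta,\delta)$-extremal frame and first note, via $(\ref{geometry-1})$ (equivalently \lemref{3.4-maj-deriv-rho-equiv-tho-i-z-zeta}(3)), that on $P_\delta(\zeta)$ one has $|\rho(z)-\rho(\zeta)|\lesssim\sum_k|w_k|\,\delta/\tau_k\lesssim\delta$, hence $\delta_\Omega(z)\simeq|\rho(z)|\le\delta_\Omega(\zeta)+C\delta$. If $\alpha\ge 1$, or if $\delta_\Omega(\zeta)\gtrsim\delta$ (so $\delta_\Omega(z)\simeq\delta_\Omega(\zeta)$ and there is no singularity), I bound $\delta_\Omega(z)^{\alpha-1}\lesssim(\delta_\Omega(\zeta)+\delta)^{\alpha-1}$, pull it out, and what remains is $(\ref{eq:integral-mod(z-zeta)-inverse-power-1+mu})$ with $\mu=0$; when $\alpha\le 1$ one further uses $(\delta_\Omega(\zeta)+\delta)^{\alpha-1}\le\delta^{\alpha-1}$ and $1\le\alpha^{-1}$ to reach the stated bound. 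The essential case is $\alpha<1$ with $\delta_\Omega(\zeta)\lesssim\delta$, where $\delta_\Omega(z)^{\alpha-1}$ is genuinely singular in $P_\delta(\zeta)$; there I change variables in the normal coordinate, replacing $\Re w_1$ by $u:=\delta_\Omega(z)=-\rho(z)$, a change with Jacobian $\simeq 1$ since $v_1$ is the complex normal, so that $u$ runs over $(0,C\delta)$, $|\Im w_1|\lesssim\delta$, $|\Re w_1|\lesssim\delta$, and $|z-\zeta|^2\ge(\Im w_1)^2+\sum_{k\ge 2}|w_k|^2$. This gives
\[
\int_{P_\delta(\zeta)}\frac{\delta_\Omega(z)^{\alpha-1}}{|z-\zeta|^{2r-1}}\,d\lambda(z)\ \lesssim\ \Big(\int_0^{C\delta}u^{\alpha-1}\,du\Big)\int_{|\Im w_1|<C\delta}\ \int_{\prod_{k\ge 2}\{|w_k|<c\tau_k\}}\frac{d\lambda(w_2,\dots,w_n)\,d(\Im w_1)}{\big((\Im w_1)^2+\sum_{k\ge 2}|w_k|^2\big)^{(2r-1)/2}}.
\]
The first factor equals $(C\delta)^{\alpha}/\alpha$; the second is a polydisc integral of the type handled above (integrate $w_n,\dots,w_{n-r+2}$, then $w_{n-r+1}$ to get $\tau_{n-r+1}$, then $w_{n-r},\dots,w_2$ to get $\prod_{j=2}^{n-r}\tau_j^2$, and finally the real variable $\Im w_1$ to get a factor $\simeq\delta$), hence $\simeq\delta\,\tau_{n-r+1}(\zeta,\delta)\prod_{j=2}^{n-r}\tau_j^2(\zeta,\delta)$. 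Multiplying and using $\tau_1=\delta$ to rewrite $\delta\cdot\delta^{\alpha}\prod_{j=2}^{n-r}\tau_j^2=\delta^{\alpha-1}\prod_{j=1}^{n-r}\tau_j^2$ yields the required estimate.

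I expect the case $\alpha<1$ to be the only real difficulty: one must use that a level slice $\{\delta_\Omega=u\}\cap P_\delta(\zeta)$ is thin in the normal direction (width $\simeq du$) while remaining as large as $P_\delta(\zeta)$ in the tangential directions, which is exactly what the substitution $\Re w_1\mapsto u$ encodes, and the divergence of $\int_0 u^{\alpha-1}\,du$ as $\alpha\to 0$ is the source of the factor $1/\alpha$ (equivalently $1/\min\{\alpha,1\}$, as in \thmref{d-bar-for-Nev}). Everything else is routine bookkeeping with the one--variable estimate above and the ordering $\tau_1\le\cdots\le\tau_n$ of the extremal radii.
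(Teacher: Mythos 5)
Your argument is correct, and it reaches both estimates by a route that is genuinely different from the one in the paper. For (\ref{eq:integral-mod(z-zeta)-inverse-power-1+mu}) the paper does not iterate one-variable integrals: it gets the lower bound from the single sub-box $\left\{ \left|\zeta_{i}-z_{i}\right|\leq\tau_{i},\,i\leq n-r;\ \left|\zeta_{i}-z_{i}\right|\lesssim\tau_{n-r+1},\,i>n-r\right\}$ and the upper bound by summing over dyadic annuli in $\left|\zeta'-z'\right|$ (and, for $r\geq2$, in $\left|\zeta''-z''\right|$), while for (\ref{eq:integral-mod(z-zeta)-incerse-delta-alpha-1}) it only sketches a similar dyadic slicing of $P_{\delta}(\zeta)$ according to the size of $\delta_{\Omega}(z)$. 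Your scheme instead orders the extremal radii $\tau_{1}\simeq\delta\lesssim\tau_{2}\leq\cdots\leq\tau_{n}$ and integrates the coordinates one at a time, which yields the two-sided bound in one pass and makes the exponent bookkeeping (in particular why $2$ is never hit, thanks to $0\leq\mu<1$) transparent; for the second estimate your case analysis plus the substitution $\mathrm{Re}\,w_{1}\mapsto u=\delta_{\Omega}(z)$ is a clean coarea-type treatment of the only singular case $\alpha<1$, $\delta_{\Omega}(\zeta)\lesssim\delta$, and it exhibits the $\nicefrac{1}{\alpha}$ constant explicitly, which the paper's sketch leaves implicit. What the paper's dyadic method buys in exchange is that it avoids any change of variables (no Jacobian to control) and reuses verbatim the decomposition technique employed throughout Section \ref{sec:Proofs-of-theorems}. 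Two small points you should state explicitly to make your write-up self-contained: the monotonicity $\tau_{2}\leq\cdots\leq\tau_{n}$ comes from the minimizing definition of the extremal basis and $\tau_{1}\simeq\delta\lesssim\tau_{j}$ from $\left|\nabla\rho\right|\lesssim1$ (it is not literally contained in (\ref{geometry-5})); and the claim that the substitution has Jacobian $\simeq1$ uses that $\partial\rho/\partial(\mathrm{Re}\,w_{1})$ stays bounded away from $0$ on $P_{\delta}(\zeta)$, which holds because $v_{1}$ is the complex normal at $\zeta$, $\nabla\rho$ is continuous and nonvanishing near $\partial\Omega$, and $\left|z-\zeta\right|\lesssim\tau_{n}(\zeta,\delta)$ is small. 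Neither is a gap, just a line each to add.
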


\begin{proof}
Let us prove (\ref{eq:integral-mod(z-zeta)-inverse-power-1+mu}).
Let 
\[
D=\left\{ \zeta\mbox{ such that }\left|\zeta_{i}-z_{i}\right|\leq\tau_{i}(z,\delta),\,i\leq n-r\mbox{ and }\left|\zeta_{i}-z_{i}\right|\lesssim\tau_{n-r+1}(z,\delta),\,i>n-r\right\} .
\]
Then the volume of $D$ is $V(D)\simeq\prod_{j=1}^{n-r}\tau_{j}(z,\delta)^{2}\tau_{n-r+1}(z,\delta)^{2r}$
and, on $D$, $\frac{1}{\left|\zeta-z\right|^{2r-1+\mu}}\gtrsim\left(\frac{1}{\tau_{n-r+1}(z,\delta)}\right)^{2r-1+\mu}$.
This proves the inequality $\gtrsim$.

To prove the converse inequality let us first consider the sets
\[
E_{l}=\left\{ \left|\zeta_{i}-z_{i}\right|\leq\tau_{i}(z,\delta),\,i\leq n-r\mbox{ and }2^{-l}\tau_{n-r+1}(z,\delta)\leq\left|\zeta',-z'\right|\lesssim2^{-l+1}\tau_{n-r+1}(z,\delta)\right\} 
\]
where $\zeta'=\left(\zeta_{n-r+1},\ldots,\zeta_{n}\right)$ and $z'=\left(z_{n-r+1},\ldots,z_{n}\right)$.
Then the volume of $E_{l}$ is 
\[
V(E_{l})\lesssim\prod_{j=1}^{n-r}\tau_{j}^{2}(z,\delta)\left(2^{-l}\tau_{n-r+1}(z,\delta)\right)^{2r},
\]
and, for $\zeta\in E_{l}$,
\[
\frac{1}{\left|\zeta-z\right|^{2r-1+\mu}}\lesssim\left(\frac{1}{2^{-l}\tau_{n-r+1}(z,\delta)}\right)^{2r-1+\mu}.
\]

Thus
\[
\int_{E_{l}}\frac{d\lambda(\zeta)}{\left|z-\zeta\right|^{2r-1+\mu}}\lesssim\prod_{i=1}^{n-r}\tau_{i}^{2}(z,\delta)\tau_{n-r+1}(z,\delta)^{1-\mu}\left(2^{-l}\right)^{1-\mu},
\]
proving the inequality if $r=1$. If $r\geq2$, consider now the sets
\[
F_{l}=\left\{ \left|\zeta_{i}-z_{i}\right|\leq\tau_{i}(z,\delta),\,i\leq n-r+1\mbox{ and }2^{l}\tau_{n-r+1}(z,\delta)\leq\left|\zeta",-z"\right|\lesssim2^{l+1}\tau_{n-r+1}(z,\delta)\right\} 
\]
where $\zeta"=\left(\zeta_{n-r+2},\ldots,\zeta_{n}\right)$ and $z"=\left(z_{n-r+2},\ldots,z_{n}\right)$.
Then the volume of $F_{l}$ is 
\[
V(F_{l})=\lesssim\prod_{j=1}^{n-r+1}\tau_{j}(z,\delta)^{2}\left(2^{l}\tau_{n-r+1}(z,\delta)\right)^{2r-2},
\]
and, for $\zeta\in F_{l}$,
\[
\frac{1}{\left|\zeta-z\right|^{2r-1+\mu}}\lesssim\left(\frac{1}{2^{l}\tau_{n-r+1}(z,\delta)}\right)^{2r-1+\mu}.
\]

Thus
\[
\int_{F_{l}}\frac{d\lambda(\zeta)}{\left|z-\zeta\right|^{2r-1+\mu}}\lesssim\prod_{i=1}^{n-r}\tau_{i}(z,\delta)^{2}\tau_{n-r+1}(z,\delta)^{1-\mu}\left(2^{l}\right)^{-1+\mu},
\]
finishing the proof of (\ref{eq:integral-mod(z-zeta)-inverse-power-1+mu}).

(\ref{eq:integral-mod(z-zeta)-incerse-delta-alpha-1}) is proved similarly
considering the sets
\[
E_{l}=P_{\delta}(\zeta)\cap\left\{ z\mbox{ such that }\delta_{\Omega}(z)\in\left[\frac{1}{2^{l+1}}\delta_{\Omega}(\zeta),\frac{1}{2^{l}}\delta_{\Omega}(\zeta)\right]\right\} .
\]

\end{proof}

We now detail the different proofs of the theorems in the next sub-sections.
Recall that, by \lemref{3-3-C1-regularity-PN}, we only have to prove
the estimates for the operator $T_{K}$ associated to the kernel $K_{N}^{1}$.

\subsection{Proof of Theorem \ref{thm:d-bar-q-gamma'-p-gamma-lip}}

\quad

In this proof the form $s(z,\zeta)$ is given by the formula (\ref{eq:basic-choice-for-s(z,zeta)}).
\begin{proof}[Proof of (1) of \thmref{d-bar-q-gamma'-p-gamma-lip}]
It is based on a version of a classical operator estimate which can
be found, for example, in Appendix B of the book of M. Range \cite{RM86}:
\begin{sllem}
Let $\Omega$ be a smoothly bounded domain in $\mathbb{C}^{n}$. Let
$\mu$ and $\nu$ be two positive measures on $\Omega$. Let $K$
be a measurable function on $\Omega\times\Omega$. Assume that there
exists a positive number $\varepsilon_{0}>0$, a positive constant
$C$ and a real number $s\geq1$ such that:
\begin{enumerate}
\item $\int_{\Omega}\left|K(z,\zeta)\right|^{s}\delta_{\Omega}(\zeta)^{-\varepsilon}d\mu(\zeta)\leq C\delta_{\Omega}(z)^{-\varepsilon}$,
\item $\int_{\Omega}\left|K(z,\zeta)\right|^{s}\delta_{\Omega}(z)^{-\varepsilon}d\nu(z)\leq C\delta_{\Omega}(\zeta)^{-\varepsilon}$,
\end{enumerate}
for all $0<\varepsilon\leq\varepsilon_{0}$, where $\delta_{\Omega}$
denotes the distance to the boundary of $\Omega$. Then the linear
operator $T$ defined by
\[
Tf(z)=\int_{\Omega}K(z,\zeta)f(\zeta)d\mu(\zeta)
\]
 is bounded from $L^{p}\left(\Omega,\mu\right)$ to $L^{q}\left(\Omega,\nu\right)$
for all $1\leq p,q<\infty$ such that $\frac{1}{q}=\frac{1}{p}+\frac{1}{s}-1$.\end{sllem}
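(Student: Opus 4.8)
The plan is to derive this exactly the way one proves Young's convolution inequality, with the two weights $\delta_\Omega^{\pm\varepsilon}$ standing in for the translation invariance that is not available here: for a genuine convolution kernel both (1) and (2) would reduce to $\|g\|_{L^{s}}<\infty$, whereas in general they are distinct statements whose mutual compatibility is precisely what the common weight $\delta_\Omega^{-\varepsilon}$ encodes. Write $p'$, $s'$ for the exponents conjugate to $p$, $s$. The hypothesis $\tfrac1q=\tfrac1p+\tfrac1s-1$ is equivalent to
\[
\frac1q+\frac1{p'}+\frac1{s'}=1,\qquad \frac1q+\frac1{s'}=\frac1p,\qquad \frac1q+\frac1{p'}=\frac1s,
\]
and in particular forces $q\ge\max(p,s)$; the first of these says that $(q,s',p')$ is a Hölder-conjugate triple.

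First I would fix $\varepsilon>0$ so small that $\varepsilon\le\varepsilon_0$ and $\varepsilon q/p'\le\varepsilon_0$, take $f\in L^{p}(\Omega,\mu)$, and, for each fixed $z$, factor
\[
\bigl|K(z,\zeta)f(\zeta)\bigr|=\Bigl(|K(z,\zeta)|^{s}|f(\zeta)|^{p}\,\delta_\Omega(\zeta)^{\varepsilon q/p'}\Bigr)^{1/q}\cdot\bigl(|f(\zeta)|^{p}\bigr)^{1/s'}\cdot\Bigl(|K(z,\zeta)|^{s}\,\delta_\Omega(\zeta)^{-\varepsilon}\Bigr)^{1/p'};
\]
this is an identity because, by the three relations above, the exponents of $|K|$, of $|f|$ and of $\delta_\Omega(\zeta)$ sum to $1$, $1$ and $0$. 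Applying Hölder in $\zeta$ against $\mu$ with exponents $(q,s',p')$, the second factor contributes $\|f\|_{L^{p}(\mu)}^{p/s'}$ and the third is bounded by hypothesis (1), giving
\[
|Tf(z)|^{q}\le C^{q/p'}\,\|f\|_{L^{p}(\mu)}^{pq/s'}\,\delta_\Omega(z)^{-\varepsilon q/p'}\int_\Omega|K(z,\zeta)|^{s}|f(\zeta)|^{p}\,\delta_\Omega(\zeta)^{\varepsilon q/p'}\,d\mu(\zeta).
\]

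Then I would integrate against $d\nu(z)$, exchange the order of integration by Tonelli, and apply hypothesis (2) with exponent $\varepsilon q/p'$ to the inner integral $\int_\Omega|K(z,\zeta)|^{s}\delta_\Omega(z)^{-\varepsilon q/p'}\,d\nu(z)\le C\,\delta_\Omega(\zeta)^{-\varepsilon q/p'}$: the weight $\delta_\Omega(\zeta)^{\varepsilon q/p'}$ is cancelled exactly, leaving $\int_\Omega|f|^{p}\,d\mu$. Collecting the powers of $\|f\|_{L^{p}(\mu)}$ gives total exponent $p/s'+p/q=p\bigl(\tfrac1{s'}+\tfrac1q\bigr)=1$, so $\|Tf\|_{L^{q}(\nu)}\le C^{1/p'+1/q}\|f\|_{L^{p}(\mu)}$. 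The boundary cases are harmless: if $s=1$ then $q=p$, $s'=\infty$, the middle factor is the constant $1$, and the computation is unchanged; if $p=1$ then $q=s$, and it is simplest to use Minkowski's integral inequality $\|Tf\|_{L^{s}(\nu)}\le\int_\Omega|f(\zeta)|\,\|K(\cdot,\zeta)\|_{L^{s}(\nu)}\,d\mu(\zeta)$ together with the bound $\|K(\cdot,\zeta)\|_{L^{s}(\nu)}^{s}\le C$, obtained by letting $\varepsilon\downarrow0$ in (2) (after normalizing so that $\delta_\Omega\le1$, using monotone convergence).

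The computation is routine once the factorization is set up; the only step that requires care — and the reason (1) and (2) carry the weight $\delta_\Omega^{-\varepsilon}$ rather than being plain $L^{s}$ bounds — is the bookkeeping of the exponents in the three-fold splitting. In particular the power $\varepsilon q/p'$ of $\delta_\Omega(\zeta)$ inserted by hand in the first factor is not free: it must be exactly the $\zeta$-weight that hypothesis (2) returns after the Fubini step, so that the two annihilate and a clean $\|f\|_{L^{p}(\mu)}^{p}$ survives.
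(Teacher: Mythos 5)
Your proof is correct and is essentially the paper's own argument (the Range-type Schur/Young test): the same three-factor H\"older splitting with the inserted weight $\delta_{\Omega}(\zeta)^{\varepsilon q/p'}=\delta_{\Omega}(\zeta)^{\varepsilon\frac{p-1}{p}q}$, hypothesis (1) applied pointwise in $z$, then Fubini and hypothesis (2) with exponent $\varepsilon q/p'\leq\varepsilon_{0}$. Your separate treatment of the endpoint $p=1$ via Minkowski's integral inequality and letting $\varepsilon\downarrow0$ in (2) is a small extra care that the paper leaves implicit, but it does not change the approach.
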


\begin{proof}[Short proof]
This is exactly the proof given by M. Range in his book: let $\varepsilon$
be sufficiently small. Writing 
\[
\left|Kf\right|=\left(|K|^{s}|f|^{p}\delta_{\Omega}(\zeta)^{\varepsilon\frac{p-1}{p}q}\right)^{\nicefrac{1}{q}}\left(|K|^{1-\frac{s}{q}}\delta_{\Omega}(\zeta)^{-\varepsilon\frac{p-1}{p}}\right)|f|^{1-\frac{p}{q}},
\]
H\"older's inequality (with $\frac{1}{q}+\frac{p-1}{p}+\frac{s-1}{s}=1$)
gives
\begin{multline*}
\left|Tf(z)\right|\leq\left(\int_{\Omega}\left|K(z,\zeta)\right|^{s}\delta_{\Omega}(\zeta)^{\varepsilon\frac{p-1}{p}q}\left|f\right|^{p}(\zeta)d\mu(\zeta)\right)^{\nicefrac{1}{q}}\\
\left(\int_{\Omega}\left|K(z,\zeta)\right|^{s}\delta_{\Omega}(\zeta)^{-\varepsilon}\right)^{\frac{p-1}{p}}\left(\int_{\Omega}\left|f(\zeta)\right|^{p}d\mu(\zeta)\right)^{\frac{s-1}{s}}.
\end{multline*}
The first hypothesis of the lemma gives (for $\varepsilon\leq\varepsilon_{0}$)
\begin{multline*}
\left|Tf(z)\right|^{q}\leq C\left(\int_{\Omega}\left|K(z,\zeta)\right|^{s}\delta_{\Omega}(\zeta)^{\varepsilon\frac{p-1}{p}q}\delta_{\Omega}(z)^{-\varepsilon\frac{p-1}{p}q}\left|f\right|^{p}(\zeta)d\mu(\zeta)\right)\\
\left(\int_{\Omega}\left|f(\zeta)\right|^{p}d\mu(\zeta)\right)^{q\frac{s-1}{s}}.
\end{multline*}
Integration with respect to the measure $d\nu(z)$ gives (using the
second hypothesis of the lemma with $\varepsilon\frac{p-1}{p}q\leq\varepsilon_{0}$)
\[
\int_{\Omega}\left|Tf(z)\right|^{q}d\nu(z)\leq C^{2}\left(\int_{\Omega}\left|f\right|^{p}d\mu\right)^{\nicefrac{q}{p}}.
\]

\end{proof}
Applying this lemma to the operator $T_{K}$ (formula (\ref{eq:operator_K1}))
with the measures $\mu=\delta_{\Omega}^{\gamma}d\lambda$ and $\nu=\delta_{\Omega}^{\gamma'}d\lambda$,
the required estimates on $K_{N}^{1}$ are summarized in the following
Lemma:
\begin{sllem}
\label{lem:estimates-kernel}Let $\mu_{0}=\frac{1-m(\gamma-\gamma')}{m(\gamma+n+1-r)+2r-1}$.\end{sllem}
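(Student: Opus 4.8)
The plan is to feed the operator lemma stated just above: the lemma collects the two integral estimates for the kernel $K_{N}^{1}$ that its hypotheses (1)--(2) demand. One applies that lemma to $T_{K}$ (formula (\ref{eq:operator_K1})) with $\mu=\delta_{\Omega}^{\gamma}d\lambda$, $\nu=\delta_{\Omega}^{\gamma'}d\lambda$ and $s=1+\mu_{0}$; from the algebraic identity $m(\gamma'+n)+2-(m-2)(r-1)=\bigl[m(\gamma+n+1-r)+2r-1\bigr]+1-m(\gamma-\gamma')$ one checks that $\tfrac1q=\tfrac1p+\tfrac1s-1$ is exactly the exponent of \thmref{d-bar-q-gamma'-p-gamma-lip}(1), and $s\geq1$ because $\gamma-\gamma'\leq\nicefrac{1}{m}$. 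So the task is to prove, for all small $\eta\geq0$ (so that $\gamma-\eta,\gamma'-\eta>-1$),
\[
\int_{\Omega}\bigl|K_{N}^{1}(z,\zeta)\bigr|^{s}\delta_{\Omega}(\zeta)^{\gamma-\eta}\,d\lambda(\zeta)\lesssim\delta_{\Omega}(z)^{-\eta}\qquad\text{and}\qquad\int_{\Omega}\bigl|K_{N}^{1}(z,\zeta)\bigr|^{s}\delta_{\Omega}(z)^{\gamma'-\eta}\,d\lambda(z)\lesssim\delta_{\Omega}(\zeta)^{-\eta},
\]
uniformly in $z$, resp.\ $\zeta$. Since $K_{N}^{1}$ and the weights are bounded above and below once $z,\zeta$ are away from $\partial\Omega$, it suffices to prove these with $z$ (resp.\ $\zeta$) in a fixed boundary neighbourhood, the far-away part being harmless.

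For the first inequality I fix $z$ and decompose the boundary neighbourhood dyadically around $z$ into the shells on which $d(z,\zeta)\simeq\varepsilon$ with $\varepsilon=2^{j}\delta_{\Omega}(z)$, $j$ running over the integers for which $2^{j}\delta_{\Omega}(z)$ lies between $0$ and $\delta_{0}$. Two regimes occur: if $\varepsilon\leq\delta_{\Omega}(z)$ then $\delta_{\Omega}(\zeta)\simeq\delta_{\Omega}(z)$ and \lemref{3.8-basic-estimates-kernel}(1) gives $|K_{N}^{1}(z,\zeta)|\lesssim\prod_{i=1}^{n-r}\tau_{i}(z,\varepsilon)^{-2}\,|z-\zeta|^{-(2r-1)}$; if $\varepsilon>\delta_{\Omega}(z)$ then $\delta_{\Omega}(\zeta)\lesssim\varepsilon$ and items (2)--(3) of the same lemma supply the extra gain $(\delta_{\Omega}(\zeta)/\varepsilon)^{k}$ for any integer $k\in[1+r-n,\,N+n-1]$.

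On a single shell I raise the pointwise bound to the power $s$, integrate the power $(2r-1)s=(2r-1)+(2r-1)\mu_{0}$ of $|z-\zeta|^{-1}$ by the first formula of \lemref{Lemma-3-9} (legitimate since $(2r-1)\mu_{0}<1$, because $\gamma>-1$ and $r\leq n-1$ force $m(\gamma+n+1-r)+2r-1>2r-1$), and use its volume factor; together with the weight this bounds the $j$-th shell by $C\,\delta_{\Omega}(z)^{\gamma-\eta}\,\tau_{n-r+1}(z,\varepsilon)^{\,1-(2r-1)\mu_{0}}\prod_{i=1}^{n-r}\tau_{i}(z,\varepsilon)^{-2\mu_{0}}$ in the first regime, and by an analogous quantity in the second, where the weight is absorbed into a power of $\varepsilon$ after using the gain with $k$ large (with one further dyadic splitting in $\delta_{\Omega}(\zeta)$ if needed). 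Then I set $\varepsilon=2^{j}\delta_{\Omega}(z)$ and rewrite each $\tau_{i}(z,\varepsilon)$ in terms of $\tau_{i}(z,\delta_{\Omega}(z))$ via the scaling (\ref{eq:comp-tau-epsilon-tau-lambda-epsilon}), replacing $\tau_{i}(z,2^{j}\delta_{\Omega}(z))$ by $2^{j\theta_{i}}\tau_{i}(z,\delta_{\Omega}(z))$ with $\theta_{i}\in[\nicefrac{1}{m},1]$; choosing the appropriate endpoint for each factor, the $j$-th shell contributes $\lesssim\delta_{\Omega}(z)^{-\eta}2^{-j\theta}$ with $\theta>0$ precisely under the hypotheses $\gamma'>-1$ and $\gamma-\nicefrac{1}{m}\leq\gamma'\leq\gamma$. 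Summing the two geometric series yields the first inequality; the second follows identically, decomposing around $\zeta$, using Lemmas \ref{lem:3.8-basic-estimates-kernel} and \ref{lem:Lemma-3-9} in their $z\leftrightarrow\zeta$ forms and carrying the weight $\delta_{\Omega}(z)^{\gamma'-\eta}$.

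The hard part is exactly this last bookkeeping. Because $\tau_{i}(z,\varepsilon)$ is pinned down, relative to $\tau_{i}(z,\delta_{\Omega}(z))$, only to lie between $(\varepsilon/\delta_{\Omega}(z))$ and $(\varepsilon/\delta_{\Omega}(z))^{\nicefrac{1}{m}}$, one must use consistently the unfavourable end of (\ref{eq:comp-tau-epsilon-tau-lambda-epsilon}) in each of the $n-r+1$ relevant directions, and in the outer regime one must pick the free integer $k$ large enough to swallow $\gamma$ and the parasitic $\varepsilon^{-\eta}$ without destroying the inner-shell sum. Carrying this out shows that $s=1+\mu_{0}$ is the largest exponent for which both geometric series still converge, which is what forces the particular value $\mu_{0}$ named in the statement — and hence the sharp mixed gain of \thmref{d-bar-q-gamma'-p-gamma-lip}.
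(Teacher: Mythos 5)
Your reduction to the operator lemma is where the proof breaks. To bound $T_{K}f(z)=\int_{\Omega}f(\zeta)\wedge K_{N}^{1}(z,\zeta)\,d\lambda(\zeta)$ from $L^{p}\left(\Omega,\delta_{\Omega}^{\gamma}\right)$ to $L^{q}\left(\Omega,\delta_{\Omega}^{\gamma'}\right)$ via the Range-type lemma with $\mu=\delta_{\Omega}^{\gamma}d\lambda$, the kernel \emph{relative to the measure $\mu$} is $K_{N}^{1}(z,\zeta)\,\delta_{\Omega}(\zeta)^{-\gamma}$, not $K_{N}^{1}$ itself. Hence the two hypotheses to verify are
\[
\int_{\Omega}\left|K_{N}^{1}(z,\zeta)\right|^{1+\mu_{0}}\delta_{\Omega}(\zeta)^{-\mu_{0}\gamma-\varepsilon}d\lambda(\zeta)\lesssim\delta_{\Omega}(z)^{-\varepsilon}
\quad\text{and}\quad
\int_{\Omega}\left|K_{N}^{1}(z,\zeta)\right|^{1+\mu_{0}}\frac{\delta_{\Omega}(z)^{\gamma'-\varepsilon}}{\delta_{\Omega}(\zeta)^{(1+\mu_{0})\gamma}}d\lambda(z)\lesssim\delta_{\Omega}(\zeta)^{-\varepsilon},
\]
which are exactly items (1) and (2) of the lemma. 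The two inequalities you set out to prove instead carry the weights $\delta_{\Omega}(\zeta)^{\gamma-\eta}$ and $\delta_{\Omega}(z)^{\gamma'-\eta}$ (with no $\zeta$-weight at all in the second); those are the Range hypotheses for the \emph{different} operator $f\mapsto\int K_{N}^{1}(z,\zeta)f(\zeta)\delta_{\Omega}(\zeta)^{\gamma}d\lambda(\zeta)$, so even if established they would not yield the mapping property of $T_{K}$ claimed in \thmref{d-bar-q-gamma'-p-gamma-lip}(1).

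Worse, with your weights the estimates themselves fail when $-1<\gamma<0$. Take $\gamma'=\gamma$ and consider only $\zeta\in P_{\delta_{\Omega}(z)}(z)$, where $\delta_{\Omega}(\zeta)\simeq\delta_{\Omega}(z)$: combining \lemref{3.8-basic-estimates-kernel}(1) with (\ref{eq:integral-mod(z-zeta)-inverse-power-1+mu}) of \lemref{Lemma-3-9} (the same computation which, with the correct weight $\delta_{\Omega}(\zeta)^{-\mu_{0}\gamma-\varepsilon}$, produces exactly $\delta_{\Omega}(z)^{-\varepsilon+(\gamma-\gamma')}$ in the paper) gives for this block a contribution of size $\delta_{\Omega}(z)^{\gamma(1+\mu_{0})-\eta}$, which is not $\lesssim\delta_{\Omega}(z)^{-\eta}$ as $\delta_{\Omega}(z)\to0$ when $\gamma<0$ (already for $m=2$, $r=1$). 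So the exponent bookkeeping you defer to the last paragraph cannot close; the particular value of $\mu_{0}$ is forced only after the weights $\delta_{\Omega}(\zeta)^{-\mu_{0}\gamma}$, resp.\ $\delta_{\Omega}(\zeta)^{-(1+\mu_{0})\gamma}\delta_{\Omega}(z)^{\gamma'}$, are carried through the shells. Your underlying strategy — dyadic coronas $P_{2^{i}\delta}\setminus P_{2^{i-1}\delta}$ around $z$ (resp.\ $\zeta$), \lemref{3.8-basic-estimates-kernel} for the kernel with $N$ large to control the outer shells, \lemref{Lemma-3-9} for the $\left|z-\zeta\right|$-integrals — is the paper's, but the reduction must be corrected (and the case $-1<\gamma'-\varepsilon\leq0$ in the second estimate needs the separate treatment via (\ref{eq:integral-mod(z-zeta)-incerse-delta-alpha-1}), since there $\delta_{\Omega}(z)^{\gamma'-\varepsilon}$ cannot be bounded by $(2^{i}\delta_{\Omega}(\zeta))^{\gamma'-\varepsilon}$).
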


\begin{enumerate}
\item For $N$ such that $-1<\gamma'\leq\gamma<N-1$ and $\varepsilon>0$
sufficiently small,
\[
\int_{\Omega}\left|K_{N}^{1}\left(z,\zeta\right)\right|^{1+\mu_{0}}\delta_{\Omega}(\zeta)^{-\mu_{0}\gamma-\varepsilon}d\lambda(\zeta)\lesssim\delta_{\Omega}(z)^{-\varepsilon}.
\]

\begin{enumerate}
\item For $-1<\gamma'$ and $N$ such that $\max\left\{ -1,\gamma-\nicefrac{1}{m}\right\} \leq\gamma'<\gamma<N-1$
and $\varepsilon>0$ sufficiently small,
\[
\int_{\Omega}\left|K_{N}^{1}\left(z,\zeta\right)\right|^{1+\mu_{0}}\frac{\delta_{\Omega}(z)^{\gamma'-\varepsilon}}{\delta_{\Omega}(\zeta)^{(1+\mu_{0})\gamma}}d\lambda(z)\lesssim\delta_{\Omega}(\zeta)^{-\varepsilon}.
\]

\end{enumerate}
\end{enumerate}

We now prove this last lemma.
\begin{proof}[Proof of (1) of \lemref{estimates-kernel}]
$K_{N}^{1}$ being bounded, uniformly in $\left(z,\zeta\right)$,
for $\zeta$ outside $P_{\varepsilon_{0}}(z)$, it is enough to prove
that
\[
\int_{P_{\varepsilon_{0}}(z)}\left|K_{N}^{1}(z,\zeta)\right|^{1+\mu_{0}}\delta_{\Omega}^{-\gamma\mu_{0}-\varepsilon}(\zeta)d\lambda(\zeta)\lesssim\delta_{\Omega}^{-\varepsilon}(z)
\]
for $\varepsilon$ sufficiently small. As this is trivial if $z$
is far from the boundary, we assume that $z$ is sufficiently close
to $\partial\Omega$.

Let $A(z,\zeta)=K_{N}^{1}(z,\zeta)\left|z-\zeta\right|^{2r-1}$. If
$\zeta\in P_{\delta_{\Omega}(z)}\left(z\right)$ then $\delta_{\Omega}(z)\simeq\delta_{\Omega}(\zeta)$
and, by (2) of \lemref{3.8-basic-estimates-kernel},
\begin{equation}
\left|A(z,\zeta)\right|^{1+\mu_{0}}\delta_{\Omega}(\zeta)^{-\gamma\mu_{0}-\varepsilon}\lesssim\frac{\delta_{\Omega}(z)^{-\mu_{0}(\gamma+n+1-r)-\varepsilon}}{\prod_{j=1}^{n-r}\tau_{j}^{2}\left(z,\delta_{\Omega}(z)\right)}.\label{eq:estimate-Z(z,zeta)-around-z}
\end{equation}
Thus, by (\ref{eq:integral-mod(z-zeta)-inverse-power-1+mu}), we get
\begin{eqnarray*}
\int_{P_{\delta_{\Omega}(z)}(z)}\left|K_{N}^{1}(z,\zeta)\right|^{1+\mu_{0}}\delta_{\Omega}(\zeta)^{-\gamma\mu_{0}-\varepsilon}d\lambda(\zeta) & \lesssim & \delta_{\Omega}(z)^{-\mu_{0}(\gamma+n+1-r+\nicefrac{(2r-1)}{m})-\varepsilon+\frac{1}{m}}\\
 & = & \delta_{\Omega}(z)^{-\varepsilon+(\gamma-\gamma')}\leq\delta_{\Omega}(z)^{-\varepsilon}.
\end{eqnarray*}

Now, let $\zeta\in P_{i}(z)=P_{2^{i}\delta_{\Omega}(z)}(z)\setminus P_{2^{(i-1)}\delta_{\Omega}(z)}(z)$,
if $N$ is sufficiently large ($N\geq\gamma+n+1$), by (2) of \lemref{3.8-basic-estimates-kernel},
we have
\[
\left|A(z,\zeta)\right|^{1+\mu_{0}}\delta_{\Omega}(\zeta)^{-\gamma\mu_{0}-\varepsilon}\lesssim\frac{\left(2^{i}\delta_{\Omega}(z)\right)^{-\mu_{0}(\gamma+n)-\varepsilon}}{\prod_{j=1}^{n-r}\tau_{j}^{2}\left(z,2^{i}\delta_{\Omega}(z)\right)}
\]

which gives, by (\ref{eq:integral-mod(z-zeta)-inverse-power-1+mu}),
\begin{eqnarray*}
\int_{P^{i}(z)}\left|K_{N}^{1}(z,\zeta)\right|^{1+\mu_{0}}\delta_{\Omega}(\zeta)^{-\gamma\mu_{0}-\varepsilon}d\lambda(\zeta) & \lesssim & \left(2^{i}\delta_{\Omega}(z)\right)^{-\mu_{0}(\gamma+n+1-r)-\varepsilon+\frac{1-(2r-1)\mu_{0}}{m}}\\
 & \leq & \delta_{\Omega}(z)^{-\varepsilon}\left(2^{i}\right)^{-\varepsilon},
\end{eqnarray*}

finishing the proof.
\end{proof}

\begin{proof}[Proof of (2) of \lemref{estimates-kernel}]
As in the preceding proof we have to show that
\[
\int_{P_{\varepsilon_{0}}(\zeta)}\left|\frac{K_{N}^{1}(z,\zeta)}{\delta_{\Omega}(\zeta)^{\gamma}}\right|^{1+\mu_{0}}\delta_{\Omega}(z)^{\gamma'-\varepsilon}d\lambda(z)\lesssim\delta_{\Omega}(\zeta)^{-\varepsilon}.
\]

If $z\in P_{\delta_{\Omega}(\zeta)}\left(\zeta\right)$ then $\delta_{\Omega}(\zeta)\simeq\delta_{\Omega}(z)$,
the estimate (\ref{eq:estimate-Z(z,zeta)-around-z}), which is still
valid replacing $\tau_{j}\left(z,\delta_{\Omega}(z)\right)$ by $\tau_{j}\left(\zeta,\delta_{\Omega}(\zeta)\right)$
(\lemref{3.4-maj-deriv-rho-equiv-tho-i-z-zeta} and (\ref{eq:integral-mod(z-zeta)-inverse-power-1+mu})),
we immediately get
\begin{eqnarray*}
\int_{P_{\delta_{\Omega}(\zeta)}\left(\zeta\right)}\left|\frac{K_{N}^{1}(z,\zeta)}{\delta_{\Omega}(\zeta)^{\gamma}}\right|^{1+\mu_{0}}\delta_{\Omega}(z)^{\gamma'-\varepsilon}d\lambda(z) & \lesssim & \delta_{\Omega}(\zeta)^{-\mu_{0}(\gamma+n+1-r)-(\gamma-\gamma')+\frac{1-(2r-1)\mu_{0}}{m}-\varepsilon}\\
 & = & \delta_{\Omega}(\zeta)^{-\mu_{0}(\gamma+n+1-r+\nicefrac{(2r-1)}{m})+\frac{1}{m}-\gamma-\gamma'-\varepsilon}\\
 & = & \delta_{\Omega}(\zeta)^{-\varepsilon}.
\end{eqnarray*}

Assume now $z\in P_{i}(\zeta)=P_{2^{i}\delta_{\Omega}(\zeta)}(\zeta)\setminus P_{2^{(i-1)}\delta_{\Omega}(\zeta)}(\zeta)$.

If $\gamma'-\varepsilon\geq0$, using $\delta_{\Omega}(z)\lesssim2^{i}\delta_{\Omega}(\zeta)$,
(2) of \lemref{3.8-basic-estimates-kernel} and (\ref{eq:integral-mod(z-zeta)-inverse-power-1+mu})
give
\begin{eqnarray*}
\int_{P_{i}(\zeta)}\left|\frac{K_{N}^{1}(z,\zeta)}{\delta_{\Omega}(\zeta)^{\gamma}}\right|^{1+\mu_{0}}\delta_{\Omega}(z)^{\gamma'-\varepsilon}d\lambda(z) & \lesssim & \left(2^{i}\delta_{\Omega}(\zeta)\right)^{-\mu_{0}(\gamma+n+1-r)-(\gamma-\gamma')+\frac{1-(2r-1)\mu_{0}}{m}-\varepsilon}\\
 & = & \left(2^{i}\delta_{\Omega}(\zeta)\right)^{-\varepsilon},
\end{eqnarray*}

finishing the proof in that case.

If $-1<\gamma'-\varepsilon$$\leq0$, as
\begin{multline*}
\int_{P_{i}(\zeta)}\left|\frac{\delta_{\Omega}(z)}{\delta_{\Omega}(\zeta)}\right|^{\gamma'-\varepsilon}\frac{d\lambda(z)}{\left(\left|z-\zeta\right|^{2r-1}\right)^{1+\mu_{0}}}\lesssim_{\gamma'-\epsilon,\mu_{0}}\\
\tau_{n-r+1}\left(\zeta,2^{i}\delta_{\Omega}(\zeta)\right)^{1-(2r-1)\mu_{0}}\prod_{j=1}^{n-r}\tau_{j}^{2}\left(\zeta,2^{i}\delta_{\Omega}(\zeta)\right),
\end{multline*}

the proof is done as before using (2) of \lemref{3.8-basic-estimates-kernel}.
\end{proof}

The proof of (1) of \thmref{d-bar-q-gamma'-p-gamma-lip} is now complete.
\end{proof}
\medskip{}

\begin{proof}[Proof of (2) and (3) of \thmref{d-bar-q-gamma'-p-gamma-lip}]
By the Hardy-Littlewood lemma we have to prove the two following
inequalities:
\begin{itemize}
\item if $p=m(\gamma+n+1-r)+2r$, $\left|\nabla_{z}\left(\int_{\Omega}f(\zeta)\wedge K_{N}^{1}(z,\zeta)\right)\right|\lesssim\delta_{\Omega}(z)^{-1}$,
\item if $p>m(\gamma+n+1-r)+2r$, $\left|\nabla_{z}\left(\int_{\Omega}f(\zeta)\wedge K_{N}^{1}(z,\zeta)\right)\right|\lesssim\delta_{\Omega}(z)^{\alpha-1}$.
\end{itemize}
Then, using H\"older's inequality these two estimates are consequences
of the following lemma:
\begin{sllem}
Let $p\geq m(\gamma+n+1-r)+2r$, $p'$ the conjugate of $p$ (i.e.
$\frac{1}{p}+\frac{1}{p'}=1$) and let $\alpha=\frac{1}{m}\left[1-\frac{m(\gamma+n+1-r)+2r}{p}\right]$.
Then
\[
\int_{\Omega}\left|\nabla_{z}K_{N}^{1}(z,\zeta)\right|^{p'}\delta_{\Omega}(\zeta)^{-\gamma\nicefrac{p'}{p}}d\lambda(\zeta)\lesssim\delta_{\Omega}(z)^{p'(\alpha-1)}.
\]
\end{sllem}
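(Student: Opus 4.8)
The plan is to estimate the integral by splitting $P_{\varepsilon_0}(z)$ into the pseudo-ball $P_{\delta_\Omega(z)}(z)$ and the dyadic coronas $P^i(z) = P_{2^i\delta_\Omega(z)}(z) \setminus P_{2^{i-1}\delta_\Omega(z)}(z)$, exactly as in the proof of \lemref{estimates-kernel}, using the gradient bounds from \lemref{3.8-basic-estimates-kernel} in place of the bounds on $K_N^1$ itself. Since $K_N^1$ and all its $z$-derivatives are bounded uniformly for $\zeta$ outside $P_{\varepsilon_0}(z)$, that part contributes at most a constant, and we may assume $z$ close to $\partial\Omega$.

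First I would write $B(z,\zeta) = \nabla_z K_N^1(z,\zeta)\,|z-\zeta|^{2r-1}$. On $P_{\delta_\Omega(z)}(z)$ one has $\delta_\Omega(\zeta)\simeq\delta_\Omega(z)$, and by (2) of \lemref{3.8-basic-estimates-kernel} (applied with $k=0$, so the extra gradient derivative costs a factor $\varepsilon^{-1}=\delta_\Omega(z)^{-1}$),
\[
|B(z,\zeta)|^{p'}\delta_\Omega(\zeta)^{-\gamma p'/p} \lesssim \frac{\delta_\Omega(z)^{-p'(\gamma+n+1-r)/p - p'}}{\prod_{j=1}^{n-r}\tau_j^2(z,\delta_\Omega(z))}.
\]
Integrating $|z-\zeta|^{-(2r-1)p'}$ over $P_{\delta_\Omega(z)}(z)$ via (\ref{eq:integral-mod(z-zeta)-inverse-power-1+mu}) (legitimate because $p' \le 2$ so the exponent $2r-1+\mu$ with $\mu=(2r-1)(p'-1)<1$) produces the extra factor $\tau_{n-r+1}(z,\delta_\Omega(z))^{1-(2r-1)(p'-1)}\prod\tau_j^2 \simeq \delta_\Omega(z)^{(1-(2r-1)(p'-1))/m}\prod\tau_j^2$ — note here I would use property (\ref{geometry-5}) to bound $\tau_{n-r+1}$ from above by its power of $\delta_\Omega$. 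Collecting exponents of $\delta_\Omega(z)$, the $\prod\tau_j^2$ cancels and, after substituting $1/p = (1/m)(1-m(\gamma+n+1-r)\cdot\frac1p) \cdots$, the total exponent works out to $p'(\alpha-1)$; this is the bookkeeping step where the precise choice of $\alpha$ is used.

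Next, on each corona $P^i(z)$ the same estimate holds with $\delta_\Omega(z)$ replaced by $2^i\delta_\Omega(z)$ inside the $\tau_j$'s and, crucially, using (2') of \lemref{3.8-basic-estimates-kernel} with $k$ chosen large (which requires $N$ large, say $N \ge \gamma+n+1$) so that the factor $(|\rho(\zeta)|/\varepsilon)^k \le 2^{-ik}$ gives summable decay in $i$; one gets a bound of the form $\delta_\Omega(z)^{p'(\alpha-1)}\,2^{-i\eta}$ for some $\eta>0$, and summing the geometric series finishes the proof. The main obstacle — really the only non-routine point — is checking that the exponent of $\delta_\Omega(z)$ that emerges from the $\tau_{n-r+1}$ integration is exactly $p'(\alpha-1)$ and not larger; this forces the hypothesis $p \ge m(\gamma+n+1-r)+2r$ (equivalently $\alpha \ge 0$, i.e. the borderline $p = m(\gamma+n)+2-(m-2)(r-1)$ after rewriting $n+1-r$ and $2r$), and it is why one cannot push below that threshold. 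A minor secondary point is ensuring $p'<\infty$, i.e. $p>1$, so that H\"older and the integrability condition $(2r-1)p' - (2r-1) < 1$ in (\ref{eq:integral-mod(z-zeta)-inverse-power-1+mu}) are both available; when $p = m(\gamma+n+1-r)+2r = 1$ only in degenerate cases, and there the $BMO$ statement is what is actually being proved, so $p>1$ is automatic.
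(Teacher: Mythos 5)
Your overall strategy (dyadic decomposition around $z$, kernel estimates from \lemref{3.8-basic-estimates-kernel}, integration via \lemref{Lemma-3-9}) is the paper's, but two steps as you describe them would fail. First, the inner region: you treat all of $P_{\delta_{\Omega}(z)}(z)$ as a single piece with the pointwise bound $\left|\nabla_{z}K_{N}^{1}(z,\zeta)\right|\lesssim\delta_{\Omega}(z)^{-1}\prod_{j\leq n-r}\tau_{j}^{-2}\left(z,\delta_{\Omega}(z)\right)\left|z-\zeta\right|^{-(2r-1)}$, citing \lemref{3.8-basic-estimates-kernel} ``with $k=0$''. But items (2)--(3) of that lemma are corona estimates ($\zeta\in P_{2\varepsilon}(z)\setminus P_{\varepsilon}(z)$), not valid on the full inner polydisk, and item (1) bounds $K_{N}^{1}$ only, not its gradient. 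In fact differentiation in $z$ worsens the diagonal singularity (roughly to $\left|z-\zeta\right|^{-2r}$ with the $\tau$'s taken at the scale of $\left|z-\zeta\right|$), so your uniform bound is false near $\zeta=z$. The paper instead decomposes the inner polydisk into the coronas $P_{-i}(z)=P_{2^{-(i-1)}\delta_{\Omega}(z)}(z)\setminus P_{2^{-i}\delta_{\Omega}(z)}(z)$, applies the corona gradient estimate at each scale $2^{-i}\delta_{\Omega}(z)$ (paying $\left(2^{-i}\delta_{\Omega}(z)\right)^{-1}$, with the $\tau_{j}$'s at that scale), and obtains the per-corona bound $\delta_{\Omega}(z)^{p'(\alpha-1)}\left(2^{-i}\right)^{p'\alpha}$; summability of this inner series is precisely where the hypothesis $p\geq m(\gamma+n+1-r)+2r$ (i.e. $\alpha\geq0$) is really used --- not, as you claim, only in the exponent bookkeeping on a single inner piece.

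Second, on the outer coronas your mechanism for decay is wrong: on $P_{2^{i}\delta_{\Omega}(z)}(z)\setminus P_{2^{i-1}\delta_{\Omega}(z)}(z)$ one only has $\left|\rho(\zeta)\right|\lesssim2^{i}\delta_{\Omega}(z)$, so $\left(\left|\rho(\zeta)\right|/\varepsilon\right)^{k}$ is bounded by a constant, not by $2^{-ik}$; choosing $N$ (hence $k$) large gives no extra decay in $i$. Moreover you never address the weight $\delta_{\Omega}(\zeta)^{-\gamma p'/p}$, which is not comparable to a power of $\delta_{\Omega}(z)$ there and blows up at the boundary when $\gamma>0$. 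The paper's point is to use the factor $\left|\rho(\zeta)\right|^{k}/\varepsilon^{k+1}$ with a power chosen to produce exactly $\delta_{\Omega}(\zeta)^{\gamma/p}\left(2^{i}\delta_{\Omega}(z)\right)^{-1-\gamma/p}$, so that after raising to the power $p'$ the weight cancels; the summable decay in $i$ then comes from the strictly negative exponent $p'(\alpha-1)$, not from powers of $\left|\rho(\zeta)\right|/\varepsilon$. (A minor further point: your justification that $\mu=(2r-1)(p'-1)<1$ via ``$p'\leq2$'' only works for $r=1$; the correct reason is $p>2r$, which does follow from the hypothesis, as the paper notes via $\eta=\frac{1}{p-1}<\frac{1}{2r-1}$.)
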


\begin{proof}[Proof of the lemma]
Denote $p'=1+\eta$ so that $\nicefrac{p'}{p}=\eta$ and $\nicefrac{1}{p}=\frac{\eta}{1+\eta}$.
Note that $\eta=\frac{1}{p-1}<\frac{1}{2r-1}$. By the basic estimates
of $K_{N}^{1}$ (and the fact that $-\frac{\gamma p'}{p}>-1$) it
suffices to estimate the above integral when the domain of integration
is reduced to $P(z,\varepsilon_{0})$.

Assume first that $\zeta\in P_{i}(z)=P_{2^{i}\delta_{\Omega}(z)}(z)\setminus P_{2^{i-1}\delta_{\Omega}(z)}(z)$.
Then, by (3) of \lemref{3.8-basic-estimates-kernel}, we have (note
that $\nicefrac{\gamma}{p}<1$)
\[
\left|\nabla_{z}K_{N}^{1}(z,\zeta)\right|\lesssim\frac{\left|\delta_{\Omega}(\zeta)\right|^{\nicefrac{\gamma}{p}}}{\left(2^{i}\delta_{\Omega}(z)\right)^{1+\nicefrac{\gamma}{p}}}\frac{1}{\prod_{j=1}^{n-r}\tau_{j}^{2}\left(z,2^{i}\delta_{\Omega}(z)\right)}\frac{1}{\left|z-\zeta\right|^{2r-1}},
\]
and by (\ref{eq:integral-mod(z-zeta)-inverse-power-1+mu}), we get
\begin{eqnarray*}
\int_{P_{i}(z)}\left|\nabla_{z}K_{N}^{1}(z,\zeta)\right|^{p'}\delta_{\Omega}(\zeta)^{-\gamma\nicefrac{p'}{p}} & d\lambda(\zeta)\lesssim & \left(2^{i}\delta_{\Omega}(z)\right)^{-(n-r+1)\eta+\frac{1-(2r-1)\eta}{m}-p'-\gamma\eta}\\
 & = & \left(2^{i}\delta_{\Omega}(z)\right)^{p'(\alpha-1)}.
\end{eqnarray*}

Assume now that $\zeta\in P_{-i}(z)=P_{2^{-(i-1)}\delta_{\Omega}(z)}(z)\setminus P_{2^{-i}\delta_{\Omega}(z)}(z)$.
Then, by (3) of \lemref{3.8-basic-estimates-kernel} (for $k=-1$)
and the fact that $\delta_{\Omega}(z)\simeq\delta_{\Omega}(\zeta)$,
we have
\[
\left|\nabla_{z}K_{N}^{1}(z,\zeta)\right|\lesssim\frac{1}{\prod_{j=1}^{n-r}\tau_{j}^{2}\left(z,2^{-i}\delta_{\Omega}(z)\right)}\frac{1}{\left|z-\zeta\right|^{2r-1}}\frac{1}{2^{-i}\delta_{\Omega}(z)},
\]
and, as before, by (\ref{eq:integral-mod(z-zeta)-inverse-power-1+mu}),
we have
\begin{eqnarray*}
\int_{P_{-i}(z)}\left|\nabla_{z}K_{N}^{1}(z,\zeta)\right|^{p'}\delta_{\Omega}(\zeta)^{-\gamma\nicefrac{p'}{p}}d\lambda(\zeta) & \lesssim & \left(2^{-i}\delta_{\Omega}(z)\right)^{p'(\alpha-1)}\left(2^{-i}\right)^{p'}\\
 & = & \left(\delta_{\Omega}(z)\right)^{p'(\alpha-1)}\left(2^{-i}\right)^{p'\alpha},
\end{eqnarray*}
finishing the proof of the lemma.
\end{proof}
The proofs of (2) and (3) of \thmref{d-bar-q-gamma'-p-gamma-lip}
are complete.
\end{proof}
The proof of \thmref{d-bar-q-gamma'-p-gamma-lip} is now complete.

\bigskip{}

\subsection{\label{sec:Proof-of-thm-est-d-bar-Nev}Proof of Theorem \ref{thm:d-bar-for-Nev}}

\quad

In this proof the form $s(z,\zeta)$ is already given by the formula
(\ref{eq:basic-choice-for-s(z,zeta)}).

For condensing, we introduce a new notation. If $U=\left\{ u_{i}\right\} _{1\leq i\leq n}$
is a set of $n$ vectors in $\mathbb{C}^{n}$, for $I=\left\{ i_{1},\ldots,i_{r}\right\} \subset I_{n}=\left\{ 1,2,\ldots,n\right\} $,
$i_{j}\neq i_{k}$ for $j\neq k$, we denote by $U_{I}$ the set
\[
U_{I}=\left\{ u_{i_{1}},\ldots,u_{i_{r}}\right\} .
\]
If $I$ is ordered increasingly, we denote by $\widehat{I}$ the subset
of $I_{n}$, ordered increasingly, such that $I_{n}=I\cup\widehat{I}$.

Writing
\[
K_{N}^{1}(z,\zeta)={\displaystyle \sideset{}{^{'}}\sum_{\left|J\right|=r-1}K_{N}^{1,J}(z,\zeta)d\overline{z}^{J}},
\]
the symbol $\sideset{}{^{'}}\sum_{J}$ meaning that the summation
is taken over increasing sets of integers $J$, if $V=\left\{ v_{1},\ldots v_{n}\right\} $
is an orthonormal basis, we write
\[
K_{N}^{1,J}(z,\zeta)\wedge f(\zeta)={\displaystyle \sideset{}{^{'}}\sum_{\left|I\right|=r}}\left\langle K_{N}^{1,J}(z,\zeta);\left(V,\overline{\widehat{V_{I}}}\right)\right\rangle \left\langle f(\zeta);\overline{V_{I}}\right\rangle ,
\]
where, denoting by $\left(\zeta_{i}^{V}\right)_{i}$ the coordinate
system associated to $V$, $\left\langle K_{N}^{1,J}(z,\zeta);\left(V,\overline{\widehat{V_{I}}}\right)\right\rangle $
is defined by
\[
K_{N}^{1,J}(z,\zeta)\wedge d\overline{\zeta^{V}}_{I}=\left\langle K_{N}^{1,J}(z,\zeta);\left(V,\overline{\widehat{V_{I}}}\right)\right\rangle d\lambda(\zeta).
\]

Thus, as we are able to estimate the kernel only when writing it in
suitable extremal coordinates, to obtain the wanted estimate, we have
to write
\[
K_{N}^{1}(z,\zeta)\wedge f(\zeta)=\sideset{}{^{'}}\sum_{\left|J\right|=r-1}\left\{ {\displaystyle \sideset{}{^{'}}\sum_{\left|I\right|=r}}\left\langle K_{N}^{1,J}(z,\zeta);\left(L,\overline{\widehat{L_{I}}}\right)\right\rangle \left\langle f(\zeta);\overline{L_{I}}\right\rangle \right\} d\overline{z}^{J}
\]
with a basis $L=\left(L_{i}(\zeta)\right)_{1\leq i\leq n}=\left(L_{i}\right)_{1\leq i\leq n}$, extremal at $\zeta$,
and apply H\"older's inequality. But this cannot be done directly since
it is well known that it is not possible to choose continuously the
bases $\left(L_{i}(\zeta)\right)_{i}$ and there is no guaranty that
the functions 
\[
\left\langle K_{N}^{1,J}(z,\zeta);\left(L,\overline{\widehat{L_{I}}}\right)\right\rangle \left\langle f(\zeta);\overline{L_{I}}\right\rangle 
\]
are measurable. To circumvent this difficulty we will choose bases
$\left(L_{i}(\zeta)\right)_{i}$ which are locally constant (thus
\emph{not extremal} at $\zeta$) and suitably close to an extremal
basis at $\zeta$. Then the properties of the geometry and the estimates
of the kernel will allow us to conclude.

\medskip{}

Let us consider a minimal covering of $\Omega$ by polydisks $P_{\delta_{\Omega}(z)}(z)$,
so that it is locally finite. Let $F$ be the union
of the boundaries of these polydisks ($F$ is closed in $\Omega$)
and let us denote by $\left\{ \Omega_{i}\right\} _{i}$ the countable
family of the connected components of $\Omega\setminus F$. For each
$i\in\mathbb{N}$ we fix an arbitrary point $Z_{i}$ in $\Omega_{i}$
and we define the coronas $R_{k}^{i}$ by
\[
R_{0}^{i}=\ring{\wideparen{P_{\delta_{\Omega}\left(Z_{i}\right)}\left(Z_{i}\right)}}\mbox{ and }R_{k}^{i}=\ring{\wideparen{P_{2^{k}\delta_{\Omega}\left(Z_{i}\right)}\left(Z_{i}\right)}}\setminus P_{2^{k-1}\delta_{\Omega}\left(Z_{i}\right)}\left(Z_{i}\right),\,k\geq1.
\]
Finally we denote $\Gamma_{i}$ the union of the boundaries of the
polydisks $R_{k}^{i}$ and $\Gamma=\cup_{i}\Gamma_{i}$.

Note that both sets $F$ and $\Gamma$ are measurable of Lebesgue
measure zero.

We now define our bases $\left(L_{j}(\zeta)\right)_{j}$ for $\left(z,\zeta\right)\in\left(\Omega\setminus F\right)\times\left(\Omega\setminus\Gamma\right)$
as follows:

\emph{let $z\in\Omega\setminus F$; then there exists a unique $i$
such that $z\in\Omega_{i}$ and a unique $k$ such that $\zeta\in R_{k}^{i}$;
then we define the vector field $L_{j}(\zeta)=L_{z,j}(\zeta)$ to
be the $j^{\mbox{th}}$ vector of a $2^{k}\delta_{\Omega}\left(Z_{i}\right)$-extremal
basis at the point $Z_{i}$ used for the polydisk $P_{2^{k}\delta_{\Omega}\left(Z_{i}\right)}\left(Z_{i}\right)$}
and set 
\[
L(\zeta)=\left(L_{j}(\zeta)\right)_{1\leq j\leq n}.
\]

With this definition the expressions 
\[
{\displaystyle \sideset{}{^{'}}\sum_{\left|I\right|=r}}\left\langle K_{N}^{1,J}(z,\zeta);\left(L(\zeta),\overline{\widehat{L_{I}(\zeta)}}\right)\right\rangle \left\langle f(\zeta),\overline{L_{I}(\zeta)}\right\rangle 
\]
is a measurable function on $\Omega\times\Omega$ (because it is the
restriction to a set of total measure of a locally smooth function)
and we can write, for $z\in\Omega\setminus F$,
\begin{eqnarray*}
u(z) & := & \int_{\Omega}K_{N}^{1}(z,\zeta)\wedge f(\zeta)\\
 & = & {\displaystyle \sideset{}{^{'}}\sum_{\left|J\right|=r-1}\left\{ \sideset{}{^{'}}\sum_{\left|I\right|=r}\int_{\Omega\setminus\Gamma}\left\langle K_{N}^{1,J}(z,\zeta);\left(L(\zeta),\overline{\widehat{L_{I}(\zeta)}}\right)\right\rangle \left\langle f(\zeta),\overline{L_{I}(\zeta)}\right\rangle d\lambda(\zeta)\right\} d\overline{z}^{J}}.
\end{eqnarray*}

The kernel $K_{N}^{1}(z,\zeta)$ being uniformly integrable in the
variable $\zeta$ (c.f. \cite{CDMb}), H\"older's inequality gives
\[
\left|u(z)\right|^{p}\lesssim{\displaystyle \sideset{}{^{'}}\sum_{\left|J\right|=r-1}\left\{ \sideset{}{^{'}}\sum_{\left|I\right|=r}\int_{\Omega\setminus\Gamma}\left|\left\langle K_{N}^{1,J}(z,\zeta);\left(L(\zeta),\overline{\widehat{L_{I}(\zeta)}}\right)\right\rangle \right|\left|\left\langle f(\zeta),\overline{L_{I}(\zeta)}\right\rangle \right|^{p}d\lambda(\zeta)\right\} }.
\]

The measurability of the functions
\[
\left(z,\zeta\right)\mapsto\left|\left\langle K_{N}^{1,J}(z,\zeta);\left(L(\zeta),\overline{\widehat{L_{I}(\zeta)}}\right)\right\rangle \right|\left|\left\langle f(\zeta),\overline{L_{I}(\zeta)}\right\rangle \right|^{p},
\]
the facts that $F$ and $\Gamma$ are of Lebesgue measure zero and
Fubini's theorem finally give
\begin{multline*}
\int_{\Omega}\left|u(z)\right|^{p}\delta_{\Omega}^{\alpha-1}(z)d\lambda(z)\lesssim{\displaystyle \sideset{}{^{'}}\sum_{\left|J\right|=r-1}\left\{ \int_{\Omega\setminus\Gamma}\left[\sideset{}{^{'}}\sum_{\left|I\right|=r}\int_{\Omega\setminus F}\left|\left\langle K_{N}^{1,J}(z,\zeta);\left(L(\zeta),\overline{\widehat{L_{I}(\zeta)}}\right)\right\rangle \right|\right.\right.}\\
\left.\left.\left|\left\langle f(\zeta),\overline{L_{I}(\zeta)}\right\rangle \right|^{p}\delta_{\Omega}^{\alpha-1}(z)d\lambda(z)\right]d\lambda(\zeta)\right\} .
\end{multline*}

For $\zeta\in\Omega\setminus\Gamma$ let us consider the coronas $Q_{0}(\zeta)=P_{\delta_{\Omega}(\zeta)}(\zeta)$
and $Q_{t}(\zeta)=P_{2^{t}\delta_{\Omega}(\zeta)}(\zeta)\setminus P_{2^{t-1}\delta_{\Omega}(\zeta)}(\zeta)$,
$t\geq1$. We now evaluate the integrals
\[
\int_{\left(\Omega\setminus\Gamma\right)\cap Q_{t}(\zeta)}\left|\left\langle K_{N}^{1,J}(z,\zeta):\left(L(\zeta),\overline{\widehat{L_{I}(\zeta)}}\right)\right\rangle \right|\left|\left\langle f(\zeta),\overline{L_{I}(\zeta)}\right\rangle \right|^{p}\delta_{\Omega}^{\alpha-1}(z)d\lambda(z).
\]

Recall that if $z\in\Omega_{s}$ there exists a unique $k$ such that
$\zeta\in R_{k}^{s}$ and $L_{j}(\zeta)$ is the $j^{\mbox{th}}$
vector of the $2^{k}\delta_{\Omega}\left(Z_{s}\right)$-extremal basis
at $Z_{s}$ chosen before. To simplify the notations, we denote by $\left(\zeta_{l}\right)_{l}$
the coordinate system associated to that basis so that $L_{l}(\zeta)=\frac{\partial}{\partial\zeta_{l}}$.
Writing everything in those coordinate systems, we have to integrate
$\left|K_{N}^{1,J}(z,\zeta)\wedge d\overline{\zeta}_{I}\right|\left|f_{I}(\zeta)\right|^{p}$
over $\left(\Omega\setminus\Gamma\right)\cap Q_{t}(\zeta)$ where
$f_{I}(\zeta)=\left\langle f(\zeta),\overline{L_{I}(\zeta)}\right\rangle $.

First, we remark that $K_{N}^{1,J}(z,\zeta)\wedge d\overline{\zeta}_{I}$
is a sum of expressions of the form $\frac{W}{D}$ where
\[
D(\zeta,z)=\left|z-\zeta\right|^{2r}\left(\frac{1}{K_{0}}S(z,\zeta)+\rho(\zeta)\right)^{n+N-r},
\]
and,
\[
W=\left(\overline{\zeta}_{m}-\overline{z}_{m}\right)\rho(\zeta)^{N}\prod_{k=1}^{n-r}\frac{\partial Q_{i_{k}}(z,\zeta)}{\partial\overline{\zeta}_{j_{k}}}\bigwedge_{i=1}^{n}\left(d\zeta_{i}\wedge d\overline{\zeta_{i}}\right)
\]
or
\[
W=\left(\overline{\zeta}_{m}-\overline{z}_{m}\right)\rho(\zeta)^{N-1}\frac{\partial\rho(\zeta)}{\partial\overline{\zeta}_{j_{k_{0}}}}Q_{i_{k_{0}}}(\zeta,z)\prod_{\substack{1\leq k\leq n-r\\ k\neq k_{0}}}\frac{\partial Q_{i_{k}}(z,\zeta)}{\partial\overline{\zeta}_{j_{k}}}\bigwedge_{i=1}^{n}\left(d\zeta_{i}\wedge d\overline{\zeta_{i}}\right),
\]
with $\left\{ j_{1},\ldots,j_{n-r},I\right\} =\left\{ 1,\ldots,n\right\} $,
and, $i_{1},\ldots,i_{n-r},m$ all different.

Then, using \lemref{3.6-Lemma-maj-deriv_rho_Q} and the properties
of the geometry, we obtain the following estimates:
\begin{itemize}
\item For $z\in\Omega_{s}\cap Q_{0}(\zeta)$, using inequality (\ref{eq:real-part-Q-zeta-z-plus-1}),
(3) of \lemref{3.4-maj-deriv-rho-equiv-tho-i-z-zeta} and the fact
that $\delta(\zeta)\simeq\delta\left(Z_{s}\right)$, $\left|K_{N}^{1,J}(z,\zeta)\wedge d\overline{\zeta}_{I}\right|$
is a sum of expressions bounded by 
\[
\frac{1}{\prod_{j=1}^{n-r}\tau_{j}^{2}\left(Z_{s},\delta_{\Omega}(\zeta)\right)}\frac{\prod_{i\in I}\tau_{i}\left(Z_{s},\delta_{\Omega}(\zeta)\right)}{\prod_{j=n-r+1}^{n}\tau_{j}\left(Z_{s},\delta_{\Omega}(\zeta)\right)}\frac{1}{\left|z-\zeta\right|^{2r-1}},
\]
and, using (1) and (2) of \lemref{3.4-maj-deriv-rho-equiv-tho-i-z-zeta},
these expressions are bounded by
\[
\frac{1}{\prod_{j=1}^{n-r}\tau_{j}^{2}\left(\zeta,\delta_{\Omega}(\zeta)\right)}\frac{\prod_{i\in I}\tau\left(\zeta,L_{i}(\zeta),\delta_{\Omega}(\zeta)\right)}{\prod_{j=n-r+1}^{n}\tau_{j}\left(\zeta,\delta_{\Omega}(\zeta)\right)}\frac{1}{\left|z-\zeta\right|^{2r-1}},
\]
thus
\begin{gather}
\left|K_{N}^{1,J}(z,\zeta)\wedge d\overline{\zeta}_{I}\right|\left|f_{I}(\zeta)\right|^{p}\lesssim\nonumber \\
\frac{1}{\prod_{j=1}^{n-r}\tau_{j}^{2}\left(\zeta,\delta_{\Omega}(\zeta)\right)}\frac{1}{\tau_{n+r-1}\left(\zeta,\delta_{\Omega}(\zeta)\right)}\frac{1}{\left|z-\zeta\right|^{2r-1}}\delta_{\Omega}(\zeta)\left\Vert f(\zeta)\right\Vert _{\mathbbmss k,p}.\label{eq:maj-K-I-f-I-anisotrope}
\end{gather}
This gives (using (\ref{eq:integral-mod(z-zeta)-incerse-delta-alpha-1}))
\[
\int_{\left(\Omega\setminus F\right)\cap Q_{0}(\zeta)}\left|K_{N}^{1,J}(z,\zeta)\wedge d\overline{\zeta}_{I}\right|\left|f_{I}(\zeta)\right|^{p}\delta_{\Omega}^{\alpha-1}(z)d\lambda(z)\lesssim\frac{\delta_{\Omega}^{\alpha}(\zeta)}{\min\left\{ \alpha,1\right\} }\left\Vert f(\zeta)\right\Vert _{\mathbbmss k,p}.
\]

\item Similarly, for $z\in\Omega_{s}\cap Q_{t}(\zeta)$, $t\geq1$, then
$\zeta\in R_{k}^{s}$ with $2^{t}\delta_{\Omega}(\zeta)\simeq2^{k}\delta_{\Omega}\left(Z_{s}\right)$,
and, using \lemref{3.5-lemma-maj-rho-S} instead of inequality (\ref{eq:real-part-Q-zeta-z-plus-1})
(as in \lemref{3.8-basic-estimates-kernel} (2)) and (\ref{eq:comp-tau-epsilon-tau-lambda-epsilon}),
$\left|K_{N}^{1,J}(z,\zeta)\wedge d\overline{\zeta}_{I}\right|\left|f_{I}(\zeta)\right|^{p}$
is a sum of expressions bounded by
\begin{gather*}
\left(\frac{\delta_{\Omega}(\zeta)}{2^{t}\delta_{\Omega}(\zeta)}\right)^{N-1}\frac{1}{\prod_{j=1}^{n-r}\tau_{j}^{2}\left(\zeta,2^{t}\delta_{\Omega}(\zeta)\right)}\frac{1}{\tau_{n+r-1}\left(\zeta,2^{t}\delta_{\Omega}(\zeta)\right)}\times\\
\frac{2^{tr}}{\left|z-\zeta\right|^{2r-1}}\delta_{\Omega}(\zeta)\left\Vert f(\zeta)\right\Vert _{\mathbbmss k,p},
\end{gather*}
giving, for $N\geq\alpha+r+3$,
\[
\int_{\left(\Omega\setminus F\right)\cap Q_{t}(\zeta)}\left|K_{N}^{1,J}(z,\zeta)\wedge d\overline{\zeta}_{I}\right|\left|f_{I}(\zeta)\right|^{p}\delta_{\Omega}^{\alpha-1}(z)d\lambda(z)\lesssim\frac{\delta_{\Omega}^{\alpha}(\zeta)}{\min\left\{ \alpha,1\right\} }\frac{\left\Vert f(\zeta)\right\Vert _{\mathbbmss k,p}}{2^{t}}.
\]
Indeed, if $\alpha\geq1$, $z\in Q_{t}(\zeta)$ implies $\delta_{\Omega}(z)\lesssim2^{t}\delta_{\Omega}(\zeta)$
and if $0<\alpha<1$, using the proof of \lemref{Lemma-3-9}.
\end{itemize}

This concludes the proof of \thmref{d-bar-for-Nev}.

\subsection{Proof of Theorem \ref{thm:d-bar-boundary-0-q-forms}}

\quad

As $f$ is assumed to be smooth in $\overline{\Omega}$, \thmref{d-bar-q-gamma'-p-gamma-lip}
implies that the form $z\mapsto u(z)=\int_{\Omega}K_{N}^{1}(z,\zeta)\wedge f(\zeta)$
is continuous on $\overline{\Omega}$.

The proof starts, as in the previous section, considering the sets
$F$ and $\Gamma$ and the vectors $\left(L_{j}(\zeta)\right)_{j}$.
Note that, as $F$ is of Lebesgue measure zero, for almost all $\varepsilon>0$,
$\varepsilon\leq\varepsilon_{0}$, $\varepsilon_{0}$ small, $\sigma_{\varepsilon}\left(\left\{ \rho=-\varepsilon\right\} \cap F\right)=0$,
$\sigma_{\varepsilon}$ being the euclidean measure on $\left\{ \rho=-\varepsilon\right\} $.
Then, the proof is done showing that there exists a constant $C>0$
such that, for such $\varepsilon$,
\[
\int_{\left\{ \rho=-\varepsilon\right\} \setminus F}\left|u\right|^{p}d\sigma_{\varepsilon}\leq C\int_{\Omega}\left\Vert f\right\Vert _{\mathbbmss k,p}d\lambda.
\]

We do it using almost the same proof as in the previous section so
we will not give details. Simply, note that using the same notations
for $\Omega_{s}$, $Q_{i}(\zeta)$ and writing $K_{N}^{1,J}(z,\zeta)\wedge f(\zeta)$
in the same extremal bases, (\ref{eq:maj-K-I-f-I-anisotrope}) implies
\[
\int_{\left(\left\{ \rho=-\varepsilon\right\} \setminus F\right)\cap Q_{0}(\zeta)}\left|K_{N}^{1,J}(z,\zeta)\wedge d\overline{\zeta_{I}}\right|\left|f_{I}(\zeta)\right|^{p}d\sigma(z)\lesssim\left\Vert f(\zeta)\right\Vert _{\mathbbmss k,p}
\]
and
\[
\int_{\left(\left\{ \rho=-\varepsilon\right\} \setminus F\right)\cap Q_{i}(\zeta)}\left|K_{N}^{1,J}(z,\zeta)\wedge d\overline{\zeta_{I}}\right|\left|f_{I}(\zeta)\right|^{p}d\sigma(z)\lesssim\frac{1}{2^{i}}\left\Vert f(\zeta)\right\Vert _{\mathbbmss k,p},
\]
for $N\geq5+r$. Finally we obtain
\[
\int_{\left\{ \rho=-\varepsilon\right\} }\left|u\right|^{p}d\sigma\lesssim\int_{\Omega}\left\Vert f(\zeta)\right\Vert _{\mathbbmss k,p}d\lambda(\zeta).
\]

\subsection{Proof of Theorem \ref{thm:Carleson-estimates-for-d-bar}}

\quad

Our proof is very similar to the one given by N. Nguyen in \cite{Ngu01}
for convex domains. As this paper was not published, we give some
details below.

A standard interpolation argument shows that we only have to prove
the BMO estimate. The form $s$ used in the previous proofs (formula
(\ref{eq:basic-choice-for-s(z,zeta)})) is not well adapted to a BMO-estimate,
so, we change it here and make a choice similar to the one made in
\cite{DM01}:
\[
s(z,\zeta)=-\rho(z)\sum\left(\overline{\zeta_{i}}-\overline{z_{i}}\right)d\left(\zeta_{i}-z_{i}\right)+\chi(z)\overline{S(\zeta,z)}\sum Q_{i}\left(\zeta,z\right)d\left(\zeta_{i}-z_{i}\right),
\]
where $S$ and $Q_{i}$ are given by \eqref{def-S-and-Qi} and $\chi$
is a smooth cut-off function equal to $0$ when $\rho(z)<-\eta_{0}$
and to $1$ when $\rho(z)>-\nicefrac{\eta_{0}}{2}$, $\eta_{0}$ sufficiently
small. Clearly, for $z$ in a compact subset of $\Omega$ and $\zeta$
in $\Omega$, 
\[
\left|\left\langle s(z,\zeta),z-\zeta\right\rangle \right|\gtrsim\left|\zeta-z\right|^{2}
\]
and, on $\Omega\times\Omega$, 
\[
\left|\left\langle s(z,\zeta),z-\zeta\right\rangle \right|\lesssim\left|\zeta-z\right|
\]
and, as $S$ and $Q_{i}$ are unchanged, formula (\ref{eq:And-Ber-formula})
is still valid. Moreover \lemref{3-3-C1-regularity-PN} remains unchanged
and the proof of the theorem is reduced to the proof of the same estimate
for the operator $T_{K}$ (formula (\ref{eq:operator_K1})).

To get the continuity up to the boundary of $z\mapsto\int_{\Omega}K_{N}^{1}(z,\zeta)\wedge f(\zeta)$
we need to prove the uniform integrability of our new kernel $K_{N}^{1}$
that is the following lemma:
\begin{sllem}
For $z$ sufficiently close to $\partial\Omega$ and $\varepsilon$
sufficiently small, if $K_{N}$ is the component of $K_{N}^{1}$ of
bi-degree $\left(n,n-1\right)$ in $\zeta$ and $\left(0,0\right)$
in $z$, 
\[
\int_{P_{\varepsilon}(z)}\left|K_{N}(z,\zeta)\right|d\lambda(\zeta)\lesssim\varepsilon^{\frac{1}{m+1}}.
\]
\end{sllem}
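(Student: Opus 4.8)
The plan is to estimate the new kernel $K_{N}^{1}$ on $P_{\varepsilon}(z)$ by re-deriving the pointwise bounds of \lemref{3.8-basic-estimates-kernel} with the modified form $s$, keeping track of the extra factor $-\rho(z)$ that now appears in the first term of $s$. The denominator contains $\left\langle s(z,\zeta),z-\zeta\right\rangle^{2\left(n-k\right)}$, and with the new $s$ the quantity $\left\langle s(z,\zeta),z-\zeta\right\rangle$ behaves, for $\zeta\in P_{\varepsilon}(z)$, like a combination of $-\rho(z)\left|z-\zeta\right|^{2}$ and $\overline{S(\zeta,z)}\langle Q(\zeta,z),z-\zeta\rangle$; using \lemref{3.5-lemma-maj-rho-S} and \lemref{3.6-Lemma-maj-deriv_rho_Q} one gets a lower bound of the shape $\left\langle s(z,\zeta),z-\zeta\right\rangle \gtrsim \left(\delta_{\Omega}(z)+d(z,\zeta)\right)\left|z-\zeta\right|^{2}$ away from the diagonal and $\gtrsim \delta_{\Omega}(z)\left|z-\zeta\right|^{2}$ near it. First I would split $P_{\varepsilon}(z)$ into the corona pieces $P_i(z)=P_{2^i\delta_{\Omega}(z)}(z)\setminus P_{2^{i-1}\delta_{\Omega}(z)}(z)$ for $0\le 2^i\delta_{\Omega}(z)\le\varepsilon$, together with the innermost ball where $\delta_{\Omega}(\zeta)\simeq\delta_{\Omega}(z)$, exactly as in the proofs of \lemref{estimates-kernel}.

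On each corona piece, combining the numerator estimates (the factor $\rho(\zeta)^{k+N}$, the $\partial_{\bar\zeta}Q$, $\partial_{\bar z}Q$, $\partial_{\bar z}s$ terms bounded via \lemref{3.6-Lemma-maj-deriv_rho_Q} and the properties of the geometry) with the denominator lower bound and \lemref{3.5-lemma-maj-rho-S}, I expect a bound of the form
\[
\left|K_{N}(z,\zeta)\right|\lesssim \frac{\left(2^i\delta_{\Omega}(z)\right)^{a}}{\delta_{\Omega}(z)^{b}}\,\frac{1}{\prod_{j=1}^{n-1}\tau_j^2\left(z,2^i\delta_{\Omega}(z)\right)}\,\frac{1}{\left|z-\zeta\right|}
\]
for $r=1$ (so $2r-1=1$), where the crucial point is that the power of $\delta_{\Omega}(z)$ in the denominator coming from the $-\rho(z)$ factors in $s$ is only $b=n-1$ (one copy of $\left|\left\langle s,z-\zeta\right\rangle\right|$ is "used up" against $\left|z-\zeta\right|^2$ in each of the $n-1$ factors), while the gain from $\rho(\zeta)^{N}$ against $(S+\rho)^{N}$ lets one trade $N$ freely. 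Then I integrate over $P_i(z)$ using \eqref{integral-mod(z-zeta)-inverse-power-1+mu} with $r=1$, $\mu=0$: $\int_{P_{2^i\delta}(z)}\left|z-\zeta\right|^{-1}d\lambda\simeq \tau_n(z,2^i\delta)\prod_{j=1}^{n-1}\tau_j^2(z,2^i\delta)$, so the $\prod\tau_j^2$ cancels and what survives is $\tau_n\left(z,2^i\delta_{\Omega}(z)\right)$ times a power of $2^i\delta_{\Omega}(z)$.

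The heart of the matter — and the step I expect to be the main obstacle — is extracting precisely the exponent $\nicefrac{1}{m+1}$ rather than $\nicefrac{1}{m}$ or $1$. This is where the modification of $s$ pays off: because $s$ now carries the factor $-\rho(z)$ linearly, the term $\partial_{\bar z}s$ contributes $\partial_{\bar z}\rho(z)$ which is bounded (not blowing up like $\delta^{-1}$), so one extra positive power of $\delta_{\Omega}(z)\simeq 2^i\delta_{\Omega}(z)$ on the innermost scale is available. Using property \eqref{comp-tau-epsilon-tau-lambda-epsilon} of the geometry, $\tau_n\left(z,2^i\delta_{\Omega}(z)\right)\lesssim 2^i\delta_{\Omega}(z)$ trivially and $\tau_n\left(z,\delta_{\Omega}(z)\right)\gtrsim \delta_{\Omega}(z)^{1/1}=\delta_{\Omega}(z)$ while on the other end $\tau_n\left(z,\varepsilon\right)\gtrsim (\varepsilon/\delta)^{1/m}\tau_n(z,\delta)$; balancing the geometric series $\sum_i$ of the resulting terms, the worst exponent comes out as $\varepsilon^{1/(m+1)}$, the extra "$+1$" in the denominator being exactly the one extra unit of vanishing order supplied by the $-\rho(z)$ in $s$ compared to the size $\delta^{1/m}$ one would get from $\tau_n$ alone. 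I would verify the bookkeeping by checking the two extreme coronas (nearest the diagonal, where $\delta_{\Omega}(\zeta)\simeq\delta_{\Omega}(z)$, and the outermost at scale $\varepsilon$) and confirming the intermediate sum is dominated by a convergent geometric series whose ratio is a negative power of $2$, leaving the claimed $\varepsilon^{1/(m+1)}$.
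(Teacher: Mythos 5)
Your overall strategy (corona decomposition of $P_{\varepsilon}(z)$ at dyadic scales, pointwise bounds on the new kernel, integration via \lemref{Lemma-3-9}, geometric summation) is the same skeleton as the paper's, but the two steps you yourself identify as crucial are not actually carried out, and the intermediate claims you substitute for them are incorrect. First, the denominator produced by the modified $s$ is exactly $-\rho(z)\left|\zeta-z\right|^{2}+\left|S(\zeta,z)\right|^{2}$, and your claimed lower bound $\left|\left\langle s(z,\zeta),\zeta-z\right\rangle \right|\gtrsim\left(\delta_{\Omega}(z)+d(z,\zeta)\right)\left|z-\zeta\right|^{2}$ is false in general: when $\left|S(\zeta,z)\right|\simeq d(z,\zeta)$ and $\left|z-\zeta\right|\simeq d(z,\zeta)^{\nicefrac{1}{m}}$ one has $\left|S\right|^{2}\simeq d^{2}\leq d^{1+\nicefrac{2}{m}}\simeq d\left|z-\zeta\right|^{2}$ for $m\geq2$, so the term $\left|S\right|^{2}$ does not dominate $d\left|z-\zeta\right|^{2}$. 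The correct argument (as in the paper) runs through a dichotomy on each corona $P_{2^{-l}\varepsilon}(z)\setminus P_{2^{-l-1}\varepsilon}(z)$: either $\left|S(\zeta,z)\right|\gtrsim2^{-l}\varepsilon$, or (by \lemref{3.5-lemma-maj-rho-S}) $-\rho(z)\gtrsim2^{-l}\varepsilon$; this produces three kinds of terms, $\frac{1}{\tau_{i}}$, $\frac{2^{-l}\varepsilon}{\tau_{i}\left|\zeta-z\right|^{2}}$ and $\frac{1}{\left|\zeta-z\right|}$, multiplied by $\frac{\tau_{i}\tau_{k}}{\prod_{j}\tau_{j}^{2}}$. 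Because your denominator bound is too strong, your sketch sees only the $\frac{1}{\left|z-\zeta\right|}$-type term and completely misses the $\frac{2^{-l}\varepsilon}{\tau_{i}\left|\zeta-z\right|^{2}}$ term, whose integration requires the separate estimate of $\int_{P_{2^{-l}\varepsilon}(z)}\left|\zeta-z\right|^{-2}d\lambda$ (with a logarithm when $n=2$) that the paper proves ad hoc. Also, your bookkeeping with a factor $\delta_{\Omega}(z)^{-(n-1)}$ and with $\partial_{\bar{z}}s$ contributions does not correspond to the component at hand: for the $\left(0,0\right)$-in-$z$ part only the term $k=n-1$ of $K_{N}^{1}$ survives, there is no $\partial_{\bar{z}}s$ factor at all, and the single occurrence of $-\rho(z)$ in the numerator is simply cancelled against $-\rho(z)\left|\zeta-z\right|^{2}$ in the denominator.

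Second, the derivation of the exponent $\nicefrac{1}{m+1}$, which you flag as ``the heart of the matter,'' is only asserted, and the heuristic you give for it is backwards. Since $\varepsilon^{\nicefrac{1}{(m+1)}}\geq\varepsilon^{\nicefrac{1}{m}}$ for small $\varepsilon$, the stated exponent is a \emph{weaker} conclusion than $\varepsilon^{\nicefrac{1}{m}}$, not a gain supplied by the factor $-\rho(z)$; in the paper the corona contributions come out as $\tau_{k}\left(z,2^{-l}\varepsilon\right)\lesssim\left(2^{-l}\varepsilon\right)^{\nicefrac{1}{m}}$, $\frac{\tau_{i}\tau_{k}}{\tau_{n}}\lesssim\left(2^{-l}\varepsilon\right)^{\nicefrac{1}{m}}$ and (from the $\left|\zeta-z\right|^{-2}$ term) roughly $\left(2^{-l}\varepsilon\right)^{\nicefrac{1}{2}}$ up to a logarithm, and the safe exponent $\nicefrac{1}{m+1}$ simply absorbs the logarithm after summing the geometric series. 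Without the correct pointwise dichotomy, the missing integral estimate for $\left|\zeta-z\right|^{-2}$, and an explicit summation showing each corona contributes a summable power of $2^{-l}\varepsilon$, the proposal does not constitute a proof of the stated bound.
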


\begin{proof}
Under the conditions of the lemma, the kernel $K_{N}$ is reduced
to
\[
\left(\frac{\rho(\zeta)}{\rho(\zeta)+\frac{1}{K_{0}}S(z,\zeta)}\right)^{N+n-1}\frac{\left(\partial_{\overline{\zeta}}Q\right)^{n-1}\sum\left(\overline{S(\zeta,z)}Q_{i}(\zeta,z)-\rho(z)\left(\overline{\zeta_{i}}-\overline{z_{i}}\right)\right)d\zeta_{i}}{-\rho(z)\left|\zeta-z\right|^{2}+\left|S(\zeta,z)\right|^{2}}.
\]

Let us denote $P_{l}=P_{2^{-l}\varepsilon}(z)\setminus P_{2^{-l-1}\varepsilon}(z)$
and $\frac{W}{D}=\frac{\overline{S(\zeta,z)}Q_{i}(\zeta,z)-\rho(z)\left(\overline{\zeta_{i}}-\overline{z_{i}}\right)}{-\rho(z)\left|\zeta-z\right|^{2}+\left|S(\zeta,z)\right|^{2}}$
expressed in the $\left(z,2^{-l}\varepsilon\right)$-extremal basis.

Let us estimate $\frac{W}{D}$ for $\zeta\in P_{l}$. 

Clearly $\frac{\left|-\rho(z)\left(\overline{\zeta_{i}}-\overline{z_{i}}\right)\right|}{D}\lesssim\frac{1}{\left|\zeta-z\right|}$.

For the first term of $W$, suppose $\left|S(\zeta,z)\right|\gtrsim2^{-l}\varepsilon$.
As $\left|Q_{i}(\zeta,z)\right|\lesssim\frac{2^{-l}\varepsilon}{\tau_{i}\left(z,2^{-l}\varepsilon\right)}$
(\lemref{3.6-Lemma-maj-deriv_rho_Q}), we have
\[
\frac{\left|\overline{S(\zeta,z)}Q_{i}(\zeta,z)\right|}{D}\lesssim\frac{1}{\tau_{i}\left(z,2^{-l}\varepsilon\right)}.
\]
Conversely, if $\left|S(\zeta,z)\right|\ll2^{-l}\varepsilon$, by
\lemref{3.5-lemma-maj-rho-S}, $\rho(z)\gtrsim2^{-l}\varepsilon$
and
\[
\frac{\left|\overline{S(\zeta,z)}Q_{i}(\zeta,z)\right|}{D}\lesssim\frac{\left(2^{-l}\varepsilon\right)^{2}}{2^{-l}\varepsilon\tau_{i}\left(z,2^{-l}\varepsilon\right)\left|\zeta-z\right|^{2}}.
\]

Then, using lemmas \ref{lem:3.4-maj-deriv-rho-equiv-tho-i-z-zeta},
\ref{lem:3.5-lemma-maj-rho-S} and \ref{lem:3.6-Lemma-maj-deriv_rho_Q},
on $P_{l}$, $\left|K_{N}\right|$ is bounded by a sum of expressions
of the type
\[
\frac{\tau_{i}\left(z,2^{-l}\varepsilon\right)\tau_{k}\left(z,2^{-l}\varepsilon\right)}{\prod_{j=1}^{n}\tau_{j}^{2}\left(z,2^{-l}\varepsilon\right)}\left[\frac{1}{\tau_{i}\left(z,2^{-l}\varepsilon\right)}+\frac{2^{-l}\varepsilon}{\tau_{i}\left(z,2^{-l}\varepsilon\right)\left|\zeta-z\right|^{2}}+\frac{1}{\left|\zeta-z\right|}\right].
\]

Now, by \lemref{Lemma-3-9}
\[
\int_{P_{2^{-l}\varepsilon}(z)}\frac{d\lambda}{\left|\zeta-z\right|}\lesssim\frac{\prod_{j=1}^{n}\tau_{j}^{2}\left(z,2^{-l}\varepsilon\right)}{\tau_{n}\left(z,2^{-l}\varepsilon\right)},
\]
and, if $n\geq3,$
\begin{eqnarray*}
\int_{P_{2^{-l}\varepsilon}(z)}\frac{d\lambda}{\left|\zeta-z\right|^{2}} & \lesssim & \tau_{n}\left(z,2^{-l}\varepsilon\right)\int_{P_{2^{-l}\varepsilon}(z)}\frac{d\lambda}{\left|\zeta-z\right|^{3}}\\
 & \lesssim & \prod_{j=1}^{n-2}\tau_{j}^{2}\left(z,2^{-l}\varepsilon\right)\tau_{n-1}\left(z,2^{-l}\varepsilon\right)\tau_{n}\left(z,2^{-l}\varepsilon\right),
\end{eqnarray*}
and, if $n=2$,
\[
\int_{P_{2^{-l}\varepsilon}(z)}\frac{d\lambda}{\left|\zeta-z\right|^{2}}\lesssim\tau_{1}\left(z,2^{-l}\varepsilon\right)^{2}\log\left(\frac{1}{\tau_{2}\left(z,2^{-l}\varepsilon\right)}\right).
\]

This finishes the proof of the lemma.
\end{proof}
\medskip{}

We now follow the (unpublished) method developed by N. Nguyen in \cite{Ngu01}
to prove the BMO-estimate for the function $u(z)=\int_{\Omega}K_{N}(z,\zeta)\wedge f(\zeta)$,
$z\in\partial\Omega$: for $z_{0}\in\partial\Omega$ and $\varepsilon$
small, denoting $B_{0}=P_{\varepsilon}(z_{0})\cap\partial\Omega$,
we have to estimate
\[
\frac{1}{\sigma\left(B_{0}\right)}\int_{B_{0}}\left|u(z)-u_{B_{0}}\right|d(z)
\]
where $\sigma$ is the euclidean measure on $\partial\Omega$ and
$u_{B_{0}}=\frac{1}{\sigma\left(B_{0}\right)}\int_{B_{0}}u(w)d\sigma(w)$.

Let $\Omega_{1}=\Omega\cap P_{C\varepsilon}\left(z_{0}\right)$ and
$\Omega_{2}=\Omega\setminus\Omega_{1}$ where $C>0$ is a sufficiently
large number, independent of $\varepsilon$ and $z_{0}$, that will
be fixed later. Then
\[
\sigma\left(B_{0}\right)\int_{B_{0}}\left|u(z)-u_{B_{0}}\right|d\sigma(z)\leq I_{1}+I_{2}
\]
where, for $i=1,2$
\[
I_{i}=\int_{B_{0}}\int_{B_{0}}\int_{\Omega_{i}}\left|\left(K_{N}(z,\zeta)-K_{N}(w,\zeta)\right)\wedge f(\zeta)\right|d\lambda(\zeta)d\sigma(z)d\sigma(w).
\]

To estimate $I_{1}$ we simply write
\[
I_{1}\leq2\sigma\left(B_{0}\right)\int_{B_{0}}\int_{\Omega_{1}}\left|K_{N}(z,\zeta)\wedge f(\zeta)\right|d\lambda(\zeta)d\sigma(z).
\]

For $\zeta\in\Omega_{1}$ let $B_{i}(\zeta)=B_{0}\cap\left(P_{2^{i}\delta_{\Omega}(\zeta)}(\zeta)\setminus P_{2^{i-1}\delta_{\Omega}(\zeta)}(\zeta)\right)$,
$i\geq1$ (recall that, by the choice of $c_{0}$ in the definition
(\ref{eq:def-polydisk}) of the polydisks, $P_{\delta_{\Omega}(\zeta)}(\zeta)\cap\partial\Omega=\emptyset$).
Then, if $z\in B_{i}(\zeta)$, writing $K_{N}(z,\zeta)\wedge f(\zeta)$
in the $\left(\zeta,2^{i}\delta_{\Omega}(\zeta)\right)$-extremal
basis and using lemmas \ref{lem:3.4-maj-deriv-rho-equiv-tho-i-z-zeta},
\ref{lem:3.5-lemma-maj-rho-S} and \ref{lem:3.6-Lemma-maj-deriv_rho_Q},
$\left|Q_{j}(\zeta,z)\right|$ is bounded by $\frac{2^{i}\delta_{\Omega}(\zeta)}{\tau_{j}\left(\zeta,2^{i}\delta_{\Omega}(\zeta)\right)}$
and $\left|S(\zeta,z)\right|$ is bounded from below by $2^{i}\delta_{\Omega}(\zeta)$
(because $z\in\partial\Omega$), and we get
\begin{eqnarray*}
\left|K_{N}(z,\zeta)\wedge f_{l}d\zeta_{l}\right| & \lesssim & \left|\frac{\delta_{\Omega}(\zeta)}{2^{i}\delta_{\Omega}(\zeta)}\right|^{N-1}\frac{2^{i}\delta_{\Omega}(\zeta)}{{\displaystyle \prod_{j=1}^{n}}\tau_{j}^{2}\left(\zeta,2^{i}\delta_{\Omega}(\zeta)\right)}\frac{\tau\left(\zeta,L_{l}(\zeta),2^{i}\delta_{\Omega}(\zeta)\right)}{2^{i}\delta_{\Omega}(\zeta)}\left|f_{l}(\zeta)\right|\\
 & \lesssim & \left(\frac{1}{2^{i}}\right)^{N-1}\frac{1}{\sigma\left(B_{i}(\zeta)\right)}\left\Vert f(\zeta)\right\Vert _{\mathbbmss k},
\end{eqnarray*}
the last inequality coming from (\ref{eq:comp-tau-epsilon-tau-lambda-epsilon}).
Integrating over $B_{i}(\zeta)$ and summing up we obtain ($C$ being
fixed, $\sigma\left(B_{0}\right)\simeq\sigma\left(\Omega_{1}\cap\partial\Omega\right)$)
\[
\frac{I_{1}}{\left(\sigma\left(B_{0}\right)\right)^{2}}\lesssim\frac{1}{\sigma\left(\Omega_{1}\cap\partial\Omega\right)}\int_{\Omega_{1}}\left\Vert f(\zeta)\right\Vert _{\mathbbmss k}\leq\left\Vert \left\Vert f(\zeta)\right\Vert _{\mathbbmss k}d\lambda\right\Vert _{W^{1}}.
\]

To estimate $I_{2}$ it is necessary to evaluate the difference $\left(K_{N}(z,\zeta)-K_{N}(w,\zeta)\right)$.
When $\Omega$ is convex, in \cite{Ngu01} N. Nguyen estimates the
derivatives of $K_{N}(\cdot,\zeta)$ on the segment $\left[z,w\right]$.
Here, this segment is not necessary contained in $\overline{\Omega}$
and we introduce two points $Z$ and $W$ which are $c_{1}\varepsilon$-translations
of $z$ and $w$ in the direction of the inward real normal at $z_{0}$,
$c_{1}$ being chosen sufficiently large (independent of $z_{0}$
and $\varepsilon$) so that the segment $\left[Z,W\right]$ is contained
in $\Omega$. Thus the segments $\left[z,Z\right]$, $\left[Z,W\right]$
and $\left[W,w\right]$ are contained in a polydisk $P_{c_{2}\varepsilon}\left(z_{0}\right)$
($c_{2}$ independent of $z_{0}$ and $\varepsilon$) and $C$ is
chosen sufficiently large so that, if $\zeta\in\Omega_{2}$, for all
$u\in P_{c_{2}\varepsilon}\left(z_{0}\right)$, the anisotropic distance
from $\zeta$ to $u$ is equivalent to the distance from $\zeta$to
$z_{0}$.

The points $z$ and $w$ being on the boundary of $\Omega$, we have
to control the variations of
\[
\left(\frac{\rho(\zeta)}{\frac{1}{K_{0}}S(z,\zeta)+\rho(\zeta)}\right)^{n-1+N}\frac{\left(\partial_{\overline{\zeta}}Q(z,\zeta)\right)^{n-1}\sum Q_{j}(\zeta,z)d\zeta_{j}}{S(\zeta,z)}=K_{N}'(z,\zeta).
\]

Let $Q_{i}=P_{2^{i}C\varepsilon}\left(z_{0}\right)\setminus P_{2^{i-1}C\varepsilon}\left(z_{0}\right)$,
$i\geq1$, and let us estimate the derivatives of $K_{N}'(u,\zeta)$
for $u\in P_{c_{2}\varepsilon}\left(z_{0}\right)$ (in particular
in the three segments) and $\zeta\in Q_{i}$. Using lemmas \ref{lem:3.4-maj-deriv-rho-equiv-tho-i-z-zeta},
\ref{lem:3.5-lemma-maj-rho-S} and \ref{lem:3.6-Lemma-maj-deriv_rho_Q},
and the fact that, if $u\in P_{c_{2}\varepsilon}\left(z_{0}\right)$,
$\left|\rho(u)\right|\lesssim c_{2}\varepsilon$, enlarging $C$ if
necessary, $S(z,u)\gtrsim2^{i}C\varepsilon$ (by \lemref{3.5-lemma-maj-rho-S}),
writing $K_{N}'$ and $f$ in the $\left(z_{0},2^{i}C\varepsilon\right)$-extremal
basis we get
\begin{multline*}
\left|\frac{\partial}{\partial u_{t}}K_{N}'(u,\zeta)\wedge d\overline{\zeta_{l}}\right|+\left|\frac{\partial}{\partial\overline{u_{t}}}K_{N}'(u,\zeta)\wedge d\overline{\zeta_{l}}\right|\lesssim\\
\frac{2^{i}C\varepsilon}{\prod_{j=1}^{n}\tau_{j}^{2}\left(z_{0},2^{i}C\varepsilon\right)}\frac{\tau\left(z_{0},L_{l},2^{i}C\varepsilon\right)}{2^{i}C\varepsilon}\frac{1}{\tau\left(z_{0},L_{t},2^{i}C\varepsilon\right)}.
\end{multline*}

On each of the three segments $\left[z,Z\right]$, $\left[Z,W\right]$
and $\left[W,w\right]$ the increase in the direction $L_{t}$ is
bounded, up to a multiplicative constant, by $\tau\left(z_{0},L_{t},c_{2}\varepsilon\right)$,
and, by (\ref{geometry-5}) of the geometry, 
\[
\tau\left(z_{0},L_{t}2^{i}C\varepsilon\right)\gtrsim\left(\frac{2^{i}C}{c_{2}}\right)^{\nicefrac{1}{m}}\tau\left(z_{0},L_{t},c_{2}\varepsilon\right),
\]
so
\[
\left|\left(K_{N}(z,\zeta)-K_{N}(w,\zeta)\right)\wedge f_{l}(\zeta)d\overline{\zeta_{l}}\right|\lesssim\frac{\left|f_{l}(\zeta)\right|}{\sigma\left(Q_{i}\cap\partial\Omega\right)}\frac{\tau\left(\zeta,L_{l},\delta_{\Omega}(\zeta)\right)}{\delta_{\Omega}(\zeta)}\left(\frac{1}{2^{i}}\right)^{\nicefrac{1}{m}},
\]
because $\tau\left(z_{0},L_{l},2^{i}C\varepsilon\right)\simeq\tau\left(\zeta,L_{l},2^{i}C\varepsilon\right)$
(\lemref{3.4-maj-deriv-rho-equiv-tho-i-z-zeta}) and
\[
\frac{\tau\left(\zeta,L_{l},2^{i}C\varepsilon\right)}{2^{i}C\varepsilon}\lesssim\frac{\tau\left(\zeta,L_{l},\delta_{\Omega}(\zeta)\right)}{\delta_{\Omega}(\zeta)}
\]
because $\delta_{\Omega}(\zeta)\lesssim2^{I}c\varepsilon$. Finally
\[
\int_{Q_{i}}\left|\left(K_{N}(z,\zeta)-K_{N}(w,\zeta)\right)\wedge f_{l}(\zeta)d\overline{\zeta_{l}}\right|d\lambda(\zeta)\lesssim\left(\frac{1}{2^{i}}\right)^{\nicefrac{1}{m}}\frac{1}{\sigma\left(Q_{i}\cap\partial\Omega\right)}\int_{Q_{i}}\left\Vert f(\zeta)\right\Vert _{\mathbbmss k},
\]
giving $\frac{I_{2}}{\sigma\left(B_{0}\right)^{2}}\lesssim\left\Vert \left\Vert f(\zeta)\right\Vert _{\mathbbmss k}d\lambda\right\Vert _{W^{1}}$,
and the proof is complete.

\section{Proof of Theorem \ref{thm:estimates-bergman}}

We use the method developed for the proofs of theorems 2.1 and 2.3
of \cite{CPDY}.

In \cite{CDM} we proved the following result: let $g$ be a gauge
of $D$ and $\rho_{0}=g^{4}e^{1-\nicefrac{1}{g}}-1$ then:
\begin{stthm}[Theorem 2.1 of \cite{CDM}]
\label{thm:regularity-P-omega-0}Let $\omega_{0}=\left(-\rho_{0}\right)^{r}$,
$r$ being a non negative rational number, and let $P_{\omega_{0}}$
be the Bergman projection of the Hilbert space $L^{2}\left(D,\omega_{0}\right)$.
Then, for $p\in\left]1,+\infty\right[$ and $1\leq\beta\leq p\left(r+1\right)-1$,
$P_{\omega_{0}}$ maps continuously the space $L^{p}\left(D,\delta_{D}^{\beta}\right)$
into itself and, for $\alpha>0$, $P_{\omega_{0}}$ maps continuously
the lipschitz space $\Lambda_{\alpha}(D)$ into itself.
\end{stthm}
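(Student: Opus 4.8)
Write $P_{\omega_{0}}f(z)=\int_{D}B_{\omega_{0}}(z,\zeta)f(\zeta)\,\omega_{0}(\zeta)\,d\lambda(\zeta)$, where $B_{\omega_{0}}$ is the reproducing kernel of $\mathcal{O}(D)\cap L^{2}(D,\omega_{0})$, and note that $P_{\omega_{0}}1=1$. The whole argument rests on sharp pointwise bounds for $B_{\omega_{0}}$ expressed through McNeal's anisotropic geometry of the convex finite type domain $D$ --- the quantities $\tau_{j}(z,\varepsilon)$, the polydiscs $P_{\varepsilon}(z)$ and the pseudo-distance $d$ recalled in \secref{Proofs-of-theorems}. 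Setting $\delta^{*}(z,\zeta)=\delta_{D}(z)+\delta_{D}(\zeta)+d(z,\zeta)$, the claim is
\[
\bigl|B_{\omega_{0}}(z,\zeta)\bigr|\lesssim\frac{1}{\lambda\bigl(P_{\delta^{*}(z,\zeta)}(z)\bigr)\,\delta^{*}(z,\zeta)^{r}},\qquad\bigl|\nabla_{z}B_{\omega_{0}}(z,\zeta)\bigr|\lesssim\frac{1}{\lambda\bigl(P_{\delta^{*}(z,\zeta)}(z)\bigr)\,\delta^{*}(z,\zeta)^{r+1}}.
\]
The diagonal case is easy: since $-\rho_{0}\simeq\delta_{D}$ one has $\omega_{0}\simeq\delta_{D}(z)^{r}$ on $P_{\delta_{D}(z)}(z)$, so the sub-mean value inequality for the plurisubharmonic functions $|h|^{2}$, $h\in\mathcal{O}\cap L^{2}(\omega_{0})$, over that polydisc gives $B_{\omega_{0}}(z,z)\lesssim\lambda(P_{\delta_{D}(z)}(z))^{-1}\delta_{D}(z)^{-r}$, and a Cauchy estimate in the same polydisc (together with the Cauchy--Schwarz bound $|B_{\omega_{0}}(z',\zeta)|\le B_{\omega_{0}}(z',z')^{1/2}B_{\omega_{0}}(\zeta,\zeta)^{1/2}$) yields the gradient bound there. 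The off-diagonal decay is the real content, and this is where convexity and finite type are used: one runs McNeal's localization/peak-function analysis for convex domains of finite type, or equivalently a Forelli--Rudin inflation to the auxiliary domain $\hat{D}=\{(z,w)\in\mathbb{C}^{n}\times\mathbb{C}^{k}:g(z)^{4}e^{1-1/g(z)}+(\|w\|^{2})^{q}<1\}$ (with $k,q$ positive integers and $r=\nicefrac{k}{q}$), which is bounded convex of finite type precisely because $t\mapsto t^{4}e^{1-1/t}$ is convex and increasing on $(0,1]$, and on which the mean value of the conjugate-holomorphic kernel over the circular $w$-fibres identifies $B_{\omega_{0}}(z,\zeta)$, up to a constant, with the unweighted Bergman kernel of $\hat{D}$ at $((z,0),(\zeta,0))$; the stated estimates then follow from the known ones for $\hat{D}$.

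Granting these estimates, the $L^{p}(D,\delta_{D}^{\beta})$ assertion follows from the Schur test applied to the operator whose kernel with respect to Lebesgue measure is $B_{\omega_{0}}(z,\zeta)\omega_{0}(\zeta)$: one tries the test function $\delta_{D}^{-t}$ and is reduced, after decomposing $D$ into $P_{\delta_{D}(z)}(z)$ and the dyadic shells $P_{2^{i}\delta_{D}(z)}(z)\setminus P_{2^{i-1}\delta_{D}(z)}(z)$, to checking
\[
\int_{D}\frac{\delta_{D}(\zeta)^{r-tp'}\,d\lambda(\zeta)}{\lambda\bigl(P_{\delta^{*}(z,\zeta)}(z)\bigr)\delta^{*}(z,\zeta)^{r}}\lesssim\delta_{D}(z)^{-tp'}\quad\text{and}\quad\int_{D}\frac{\delta_{D}(z)^{\beta-tp}\,d\lambda(z)}{\lambda\bigl(P_{\delta^{*}(z,\zeta)}(z)\bigr)\delta^{*}(z,\zeta)^{r}}\lesssim\delta_{D}(\zeta)^{\beta-r-tp}.
\]
Using $\lambda(P_{\varepsilon}(z))\simeq\varepsilon\prod_{j\geq2}\tau_{j}(z,\varepsilon)^{2}$, property (\ref{geometry-5}) of the geometry, and the elementary fact $\int_{P_{\varepsilon}(z_{0})}\delta_{D}^{s}\,d\lambda\simeq\varepsilon^{s}\lambda(P_{\varepsilon}(z_{0}))$ for $s>-1$, both integrals reduce to summing geometric series over the shells: the first is controlled when $0<t<\nicefrac{(r+1)}{p'}$ and the second when $\nicefrac{(\beta-r)}{p}<t<\nicefrac{(\beta+1)}{p}$, and these two ranges for $t$ have a common point exactly when $-1<\beta<p(r+1)-1$ --- which is (essentially) the hypothesis. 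The Schur lemma then gives $P_{\omega_{0}}\colon L^{p}(D,\delta_{D}^{\beta})\to L^{p}(D,\delta_{D}^{\beta})$.

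For the Lipschitz statement the quickest route is again to pass to $\hat{D}$: the lift $\hat{f}(z,w)=f(z)$ satisfies $\|\hat{f}\|_{\Lambda_{\alpha}(\hat{D})}=\|f\|_{\Lambda_{\alpha}(D)}$, the fibre mean value identity gives $P_{\omega_{0}}f(z)=c\,(P_{\hat{D}}\hat{f})(z,0)$, the McNeal--Stein theorem gives $P_{\hat{D}}\hat{f}\in\Lambda_{\alpha}(\hat{D})$, and restriction to the slice $\{w=0\}$, which meets $\partial\hat{D}$ transversally along $\partial D\times\{0\}$, returns an element of $\Lambda_{\alpha}(D)$. (Alternatively, directly on $D$: since $\nabla_{z}(P_{\omega_{0}}1)\equiv0$ one has $\nabla_{z}P_{\omega_{0}}f(z)=\int_{D}\nabla_{z}B_{\omega_{0}}(z,\zeta)\bigl(f(\zeta)-f(z)\bigr)\omega_{0}(\zeta)\,d\lambda(\zeta)$, and a Calder\'on--Zygmund-type splitting of $D$ with the gradient bound and \lemref{Lemma-3-9}-type integrals yields $|\nabla_{z}P_{\omega_{0}}f(z)|\lesssim\|f\|_{\Lambda_{\alpha}}\,\delta_{D}(z)^{\alpha-1}$ near $\partial D$, whence $P_{\omega_{0}}f\in\Lambda_{\alpha}(D)$ by the Hardy--Littlewood lemma.) I expect the only real difficulty to be the off-diagonal estimate for $B_{\omega_{0}}$: the diagonal bound, the Schur computation, the transfer to $\hat{D}$ and the Hardy--Littlewood step are routine and parallel the arguments of \secref{Proofs-of-theorems}, whereas controlling $B_{\omega_{0}}(z,\zeta)$ away from the diagonal genuinely requires the convexity and finite type of $D$, and the specific form $\rho_{0}=g^{4}e^{1-\nicefrac{1}{g}}-1$ enters only through $-\rho_{0}\simeq\delta_{D}$ and, in the inflation, through the convexity of $t\mapsto t^{4}e^{1-1/t}$.
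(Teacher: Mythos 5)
This statement is not proved in the paper at all: it is Theorem~2.1 of \cite{CDM}, quoted as an external ingredient for the proof of \thmref{estimates-bergman}, so there is no internal proof to compare against. Your outline in fact reconstructs the strategy of \cite{CDM} itself, as the final Remark of the paper confirms: the proof there goes through pointwise estimates of the Bergman kernel of an inflated domain $\widetilde{D}=\{(z,w):\rho_{0}(z)+\sum|w_{i}|^{2q_{i}}<0\}$ --- smooth, convex and of finite type precisely because $r$ is rational and $\rho_{0}=g^{4}e^{1-\nicefrac{1}{g}}-1$ is convex --- together with the Ligocka/Forelli--Rudin identification of $B_{\omega_{0}}(z,\zeta)$ with $B_{\widetilde{D}}((z,0),(\zeta,0))$, a Schur test for the weighted $L^{p}$ bound, and McNeal--Stein for the Lipschitz bound. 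Your kernel bounds and the Schur arithmetic (the window $\max\{0,\nicefrac{(\beta-r)}{p}\}<t<\min\{\nicefrac{(r+1)}{p'},\nicefrac{(\beta+1)}{p}\}$, nonempty iff $-1<\beta<p(r+1)-1$) are correct; note that the lower bound ``$1\leq\beta$'' in the statement is evidently a misprint for ``$-1<\beta$'', cf.\ the range $0<\beta+1\leq p(r+1)$ given in the Introduction.

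Two caveats. First, the power test function $\delta_{D}^{-t}$ only yields the \emph{open} range and misses the endpoint $\beta=p(r+1)-1$ asserted in the statement; for the use made of the theorem in the paper (where $\beta\leq r<p(r+1)-1$ for $p>1$) the open range suffices, but as a proof of the quoted statement yours is incomplete there. Second, your parenthetical ``direct'' proof of the $\Lambda_{\alpha}$ assertion does not work as written: with the isotropic gradient bound $|\nabla_{z}B_{\omega_{0}}|\lesssim\lambda(P_{\delta^{*}}(z))^{-1}\delta^{*-(r+1)}$ and $|f(\zeta)-f(z)|\lesssim\|f\|_{\Lambda_{\alpha}}|z-\zeta|^{\alpha}$, already the near-diagonal piece contributes $\delta_{D}(z)^{\nicefrac{\alpha}{m}-1}$, since $|z-\zeta|$ can be as large as $\tau_{n}(z,\delta_{D}(z))\simeq\delta_{D}(z)^{\nicefrac{1}{m}}$ on $P_{\delta_{D}(z)}(z)$; Hardy--Littlewood then only gives $\Lambda_{\nicefrac{\alpha}{m}}$, not $\Lambda_{\alpha}$. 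Recovering $\Lambda_{\alpha}$ requires the anisotropic \emph{directional} derivative estimates $|\nabla_{v}B_{\omega_{0}}|\lesssim\tau(z,v,\delta^{*})^{-1}\lambda(P_{\delta^{*}}(z))^{-1}\delta^{*-r}$ and the more careful pairing of increments with directions carried out by McNeal--Stein; your primary route (lift to $\widetilde{D}$, apply McNeal--Stein there, restrict to the slice $w=0$) is the sound one and is what \cite{CDM} does.
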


If $\omega$ is as in \thmref{estimates-bergman} then there exists
a strictly positive $\mathcal{C}^{1}$ function in $\overline{D}$,
$\varphi$, such that $\omega=\varphi\omega_{0}$. Then we compare
the regularity of $P_{\omega_{0}}$ and $P_{\omega}$ using the following
formula (Proposition 3.1 of \cite{CPDY}): for $u\in L^{2}\left(D,\omega\right)$,
\[
\varphi P_{\omega}(u)=P_{\omega_{0}}(\varphi u)+\left(\mathrm{Id}-P_{\omega_{0}}\right)\circ A\left(P_{\omega}(u)\wedge\overline{\partial}\varphi\right),
\]
where $A$ is any operator solving the $\overline{\partial}$-equation
for $\overline{\partial}$-closed forms in $L^{2}\left(D,\omega\right)$.

We first show that $P_{\omega}$ maps continuously $L^{p}\left(D,\delta_{\Omega}^{r}\right)$
into itself. Let $f\in L^{p}\left(D,\delta_{\Omega}^{r}\right)$,
$p\in\left[2,+\infty\right[$. For $A$ we choose the operator $T$
of \propref{d-bar-gain-exponent} with $\gamma=r$, and we choose
$\varepsilon$ and an integer $M$ such that $0<\varepsilon\leq\varepsilon_{N}$,
$\varepsilon_{N}$ as in \propref{d-bar-gain-exponent}, and $p=2+M\varepsilon$.
Let us prove, by induction, that $P_{\omega}(f)\in L^{2+k\varepsilon}\left(D,\delta_{D}^{r}\right)$
for $k=0,\ldots,M$.

Assume this is true for $0\leq k<M$. Then by \propref{d-bar-gain-exponent},
\[
A\left(P_{\omega}(f)\wedge\overline{\partial}\varphi\right)\in L^{2+(k+1)\varepsilon}\left(D,\delta_{D}^{r}\right)
\]
and, by \thmref{regularity-P-omega-0}, 
\[
\left(\mathrm{Id}-P_{\omega_{0}}\right)\circ A\left(P_{\omega}(u)\wedge\overline{\partial}\varphi\right)\in L^{2+(k+1)\varepsilon}\left(D,\delta_{D}^{r}\right).
\]
As $\varphi$ is continuous and strictly positive we get $P_{\omega}(f)\in L^{2+(k+1)\varepsilon}\left(D,\delta_{D}^{r}\right)$.

Thus, $P_{\omega}$ maps $L^{p}\left(D,\delta_{D}^{r}\right)$ into
itself for $p\in\left[2,+\infty\right[$. The same result for $p\in\left]1,2\right]$
follows because $P_{\omega}$ is self-adjoint.

\medskip{}

To prove that $P_{\omega}$ maps $L^{p}\left(D,\delta_{D}^{\beta}\right)$
into itself, for $-1<\beta\leq r$, we use a similar induction argument
using \propref{d-bar-gain-weight} instead of \propref{d-bar-gain-exponent}:

For $A$ we choose now the operator $T$ of \propref{d-bar-gain-weight}
with $\gamma=r$, and $\varepsilon$ such that $0<\varepsilon\leq\nicefrac{1}{m}$,
and for which there exists an integer $L$ such that $\beta=r-L\varepsilon$. For $f\in L^{p}\left(D,\delta_{D}^{\beta}\right)$,
assume $P_{\omega}(f)\in L^{2}\left(D,\delta_{D}^{r-l\varepsilon}\right)$,
$0\leq l<L$. Then, \propref{d-bar-gain-weight} and \thmref{regularity-P-omega-0}
imply $\left(\mathrm{Id}-P_{\omega_{0}}\right)\circ A\left(P_{\omega}(u)\wedge\overline{\partial}\varphi\right)\in L^{p}\left(D,\delta_{D}^{r-(l+1)\varepsilon}\right)$
which gives $P_{\omega}(f)\in L^{p}\left(D,\delta_{D}^{r-(l+1)\varepsilon}\right)$.
By induction this gives $P_{\omega}(f)\in L^{p}\left(D,\delta_{D}^{\beta}\right)$,
concluding the proof of (1) of the theorem.

The proof of (2) of the theorem is now easy: assume $u\in\Lambda_{\alpha}(D)$,
$0<\alpha\leq\nicefrac{1}{m}$. Let $p\leq+\infty$ such that $\alpha=\frac{1}{m}\left[1-\frac{m(r+n)+2}{p}\right]$.
By part (1), $P_{\omega}(u)\in L^{p}(D,\delta_{D}^{r})$, by (3) of
\thmref{d-bar-q-gamma'-p-gamma-lip}, $A\left(P_{\omega}(u)\wedge\overline{\partial}\varphi\right)\in\Lambda_{\alpha}(D)$
($A$ being the operator $T$), and, by \thmref{regularity-P-omega-0},
$\left(\mathrm{Id}-P_{\omega_{0}}\right)\circ A\left(P_{\omega}(u)\wedge\overline{\partial}\varphi\right)\in\Lambda_{\alpha}(D)$
concluding the proof.
\begin{rem*}
\quad\mynobreakpar
\begin{enumerate}
\item The restriction $-1<\beta\leq r$ in \thmref{estimates-bergman} (instead
of $0<\beta+1\leq p(r+1)$ in \cite{CDM}) is due to the method because
if $f\in L^{p}\left(D,\delta_{D}^{\beta}\right)$ with $\beta>r$,
a priori $P_{\omega}(f)$ does not exist.
\item The restriction $r\in\mathbb{Q}_{+}$ is not natural and it is very
probable that \thmref{estimates-bergman} is true with $r\in\mathbb{R}_{+}$.
To get that with our method we should first prove the result of \thmref{regularity-P-omega-0}
for $r$ a non negative real number. Looking at the proof in \cite{CDM},
this should be done proving point-wise estimates of the Bergman kernel
of a domain $\widetilde{D}$ of the form
\[
\widetilde{D}=\left\{ (z,w)\in\mathbb{C}^{n+m}\mbox{ such that }\rho_{0}(z)+\sum\left|w_{i}\right|^{2q_{i}}<0\right\} ,
\]
with $q_{i}$ large \emph{real} numbers such that $\sum\nicefrac{1}{q_{i}}=r$.
The difficulty here being that $\widetilde{D}$ is no more $\mathcal{C}^{\infty}$-smooth
and thus the machinery induced by the finite type cannot be used.
\end{enumerate}
\end{rem*}

\bibliographystyle{amsalpha}

\providecommand{\bysame}{\leavevmode\hbox to3em{\hrulefill}\thinspace}
\providecommand{\MR}{\relax\ifhmode\unskip\space\fi MR }
\providecommand{\MRhref}[2]{%
  \href{http://www.ams.org/mathscinet-getitem?mr=#1}{#2}
}
\providecommand{\href}[2]{#2}

\end{document}